\journalname{}
\begin{document}

\title{Tikhonov Regularization of Second-order plus First-order Primal-dual Dynamical Systems for Separable Convex Optimization}

\author{Xiangkai Sun$^ {1}$\and Lijuan Zheng$^{1}$ \and  Kok Lay Teo$^{2}$}

\institute{\\Xiangkai Sun (\Letter)  \at{\small sunxk@ctbu.edu.cn } \\
          \\Lijuan Zheng \at {\small zhenglljuanlzm@163.com} \\ \\Kok Lay Teo\at{\small K.L.Teo@curtin.edu.au}\\\\
               $^{1}$Chongqing Key Laboratory of Statistical Intelligent Computing and Monitoring, College of Mathematics and Statistics,
 Chongqing Technology and Business University,
Chongqing 400067, China\\
$^{2}$School of Mathematical Sciences, Sunway University, Bandar Sunway, 47500 Selangor Darul Ehsan, Malaysia.
}

\date{Received: date / Accepted: date}

\maketitle

\begin{abstract}
 This paper deals with a Tikhonov regularized second-order plus first-order primal-dual dynamical system with time scaling for separable convex optimization problems with linear equality constraints. This system consists of two second-order ordinary differential equations for the primal variables and one first-order ordinary differential equation for the dual variable. By utilizing the Lyapunov analysis approach, we obtain the convergence properties of the primal-dual gap, the objective function error, the feasibility measure and the gradient norm of the objective function along the trajectory. We also establish the strong convergence of the primal trajectory generated by the dynamical system towards the minimal norm solution of the separable convex optimization problem. Furthermore, we give numerical experiments to illustrate the theoretical results, showing that our dynamical system performs better than those in the literature in terms of convergence rates.
\end{abstract}
\keywords{Separable convex optimization \and Tikhonov regularization \and Convergence rate\and Primal-dual dynamical system}
\subclass{90C25 \and 37N40 \and 34D05}

\section{Introduction}

Let $\mathcal{X}$ be a real Hilbert space equipped with the inner product $\langle\cdot,\cdot\rangle$ and the norm $\|\cdot\|$. Let $f:\mathcal{X}\rightarrow\mathbb{R}$ be a convex differentiable function. The unconstrained convex optimization problem is defined as follows:
\begin{eqnarray}\label{in1.1}
\min_{x\in\mathcal{X}}{f(x)}
\end{eqnarray}
In recent years, the second-order dynamics approach has attracted the interest of many researchers as one of the powerful frameworks for finding the solution set of problem (\ref{in1.1}). In order to solve problem (\ref{in1.1}), Polyak \cite{B1964} first introduces the heavy ball method with a friction system:
\begin{eqnarray*}
\ddot{x}( t )+\gamma \dot{x}( t )+\nabla f( x( t ))=0,
\end{eqnarray*}
where $\gamma>0$ is a constant damping coefficient. Su et al. \cite{SBC2016} propose the following inertial dynamical system with asymptotically vanishing damping:
\begin{eqnarray*}
~~~~~~~~~~~~~~~~~~~~~~~~~~~~~~~~~~~~~~~~\ddot{x}(t)+\frac{\alpha}{t}\dot{x}(t)+\nabla f(x(t))=0,~~~~~~~~~~~~~~~~~~~~~~~~~~~~~(\textup{AVD}_{\alpha})
\end{eqnarray*}
where $\alpha>0$ is a constant and $\frac{\alpha}{t}$ is the vanishing damping coefficient. In particular, in the case $\alpha=3$, the dynamical system (\textup{AVD}$_{\alpha}$) can be regarded as the continuous limit of the Nesterov's accelerated gradient algorithm \cite{y1983}. Moreover, when $\alpha\geq3$,  they show that as time $t$ approaches infinity, the asymptotic convergence rate of the objective function value along the trajectory generated by (\textup{AVD}$_{\alpha}$) is $\mathcal{O}\left ( \frac{1}{t^2}\right )$. In the last few years, the convergence properties of the inertial system (\textup{AVD}$_{\alpha}$) with general damping coefficients have been intensively investigated from several different perspectives, see \cite{ac2017,CEG2009,ceg2009,accr2018,mp2018att}.

It is well known that the time scaling technique is an efficient way to improve the convergence rate of (\textup{AVD}$_{\alpha}$).  Attouch et al. \cite{acr2019} give the following inertial dynamical system with vanishing damping and a time scaling coefficient:
\begin{eqnarray*}
\ddot{x}(t)+\frac{\alpha}{t}\dot{x}(t)+\beta(t)\nabla f(x(t))=0,
\end{eqnarray*}
where $\alpha >0$ is a constant, $t\geq t_0>0$, and $\beta:[t_0,+\infty)\rightarrow[0,+\infty)$ is the time scaling function. They establish fast convergence rate of the objective function error, which can be regarded as an extension of the results obtained in \cite{SBC2016}. For more convergence results on the unconstrained optimization problem (\ref{in1.1}) for inertial dynamical system with time scaling, see \cite{wwj2016,wrj2021,botc2022,bot2023,2024gopt}.

On the other hand, Tikhonov regularization technique ensure that the trajectory converges strongly to the minimal norm solution of the unconstrained optimization problem (\ref{in1.1}), rather than weakly to an arbitrary minimizer. In this context, many scholars have studied the Tikhonov regularized second-order dynamical system for the unconstrained optimization problem (\ref{in1.1}). For example, Attouch et al. \cite{ACr2018} introduce the following Tikhonov regularized second-order dynamical system:
\begin{eqnarray}\label{in1.2}
\ddot{x} ( t )+\frac{\alpha}{t}\dot{x}( t )+\nabla f( x ( t ) )+\epsilon ( t )x ( t )=0,
\end{eqnarray}
where $t\geq t_0$ and $\epsilon:[t_0,+\infty)\rightarrow[0,+\infty)$ is the Tikhonov regularization function. They show that the dynamical system (\ref{in1.2}) maintains a fast convergence rate of the objective function value along the trajectory. Furthermore, they find that each trajectory of the dynamical system (\ref{in1.2}) converges strongly to the minimal norm solution of the unconstrained optimization problem (\ref{in1.1}). For more details on the Tikhonov regularized second-order dynamical systems for the unconstrained optimization problem (\ref{in1.1}) and the more general monotone inclusion problem, see \cite{ACS2021,bcl2021,xw2021,abcr2023,L2023, jmaa2024bot,ck2024, ka2024}.

Note that all of the above papers focus on the study of the unconstrained optimization problem (\ref{in1.1}). Recently, various second-order dynamical systems in primal-dual framework have been introduced to deal with the following constrained convex optimization problem:
\begin{eqnarray}\label{in1.3}
\left\{ \begin{array}{ll}
&\mathop{\mbox{min}}\limits_{x\in\mathcal{X}}~~{f(x)}\\
&\mbox{s.t.}~~Ax=b,
\end{array}
\right.
\end{eqnarray}
where $\mathcal{X}$ and $\mathcal{Z}$ are real Hilbert spaces, $A:\mathcal{X}\rightarrow\mathcal{Z}$ is a continuous linear operator and $b\in\mathcal{Z}$. To solve problem (\ref{in1.3}), Zeng et al. \cite{ZLC2023} propose the following inertial primal-dual dynamical system with vanishing damping:
\begin{eqnarray*}
\left\{ \begin{array}{ll}
&\ddot{x}(t)+\frac{\alpha }{t}\dot{x}(t)+\nabla f(t)+A^\top\left ( \lambda (t)+\kappa t\dot{\lambda }(t)\right )+A^\top\left ( Ax(t)-b\right )=0,\\
&\ddot{\lambda }(t)+\frac{\alpha }{t}\dot{\lambda }(t)-\left(A( x (t)+\kappa t\dot{x }(t))-b\right)=0,
\end{array}
 \right.
\end{eqnarray*}
where $\alpha >3$ and $\kappa=\frac{1}{2}$. They show that the fast convergence rates of the primal-dual gap and the feasibility measure along the trajectory are $\mathcal{O}\left(\frac{1}{t^2}\right)$ and $\mathcal{O}\left(\frac{1}{t}\right)$, respectively. Subsequently, Bo\c{t} and Nguyen \cite{BN2021} improve the convergence rates of the work in \cite{ZLC2023} and obtain weak convergence results for the trajectory to the primal-dual optimal solution of   problem (\ref{in1.3}). Hulett and Ngugen \cite{hn2023} further study the inertial primal-dual dynamical system involving time scaling coefficient in \cite{ZLC2023}.

Recently, He et al. \cite{HHF2022} introduce the following ``second-order primal" plus ``first-order dual" inertial dynamical system with constant viscous damping and time scaling coefficients for problem (\ref{in1.3}):
\begin{eqnarray*}
\left\{ \begin{array}{ll}
&\ddot{x}(t)+\gamma \dot{x}(t)+\beta (t)\left(\nabla f(x(t))+A^\top\lambda (t)+\sigma A^\top\left ( Ax(t)-b\right )\right)=0,\\
&\dot{\lambda }(t)-\beta (t)\left ( A\left ( x(t)+\delta \dot{x}(t)\right )-b\right )=0.
\end{array}
 \right.
\end{eqnarray*}
Here, $t\geq t_0>0$, $\gamma>0$ is the constant damping coefficient, $\delta>0$ is the extrapolation coefficient, $\sigma\geq0$ is the penalty parameter of the corresponding augmented Lagrangian function, and $\beta:[t_0,+\infty)\rightarrow (0,+\infty)$ is the time scaling function. They show that the convergence rates of the objective function error and the feasibility measure can grow exponential when the time scaling grows exponentially. As an extension of the inertial dynamical system in \cite{HHF2022}, a second-order plus first-order primal-dual dynamical system with time scaling and vanishing damping for problem (\ref{in1.3}) is investigated in \cite{hhf2022}. Zhu et al. \cite{ZHF2024} introduce a Tikhonov regularized second-order plus first-order primal-dual dynamical system with asymptotically vanishing damping for problem (\ref{in1.3}), and establish the fast convergence rates of the primal-dual gap, the feasibility measure and the objective error along the trajectory. More results on the convergence rates of inertial dynamical systems for the linearly constrained convex optimization problem (\ref{in1.3}) can be found in \cite{htlf2023,hhf2024, zdx2024,zhf2024,hhf2023}.

As a special case of linear equality constrained optimization problems, separable convex optimization with linear equality constraints has been used widely in various fields, such as machine learning, image recovery, statistical learning and signal recovery \cite{bpcpe20113,gosr20142,baijc,llf20194,hexing}. The separable convex optimization problem with linear equality constraints is defined as follows:
\begin{eqnarray}\label{1.1}
\left\{ \begin{array}{ll}
&\mathop{\mbox{min}}\limits_{x\in \mathcal{X}, y\in \mathcal{Y}}~\varPhi  (x,y):=f(x) + g(y)\\
&~~~\mbox{s.t.}~~~~~Ax+By=b,
\end{array}
\right.
\end{eqnarray}
where $f: \mathcal{X}\rightarrow \mathbb{R}$ and $g:\mathcal{Y}\rightarrow\mathbb{R}$ are two smooth convex functions, $A:\mathcal{X}\rightarrow\mathcal{Z}$ and $B:\mathcal{Y}\rightarrow\mathcal{Z}$ are two linear continuous operators and $b\in  \mathcal{Z}$.

We observe that only a few papers in the literature devoted to studying second-order dynamical systems for solving the separable convex optimization problem (\ref{1.1}). More precisely, He et al. \cite{hhf2021} consider the following second-order dynamical system with general damping parameters and extrapolation coefficients for the separable optimization problem (\ref{1.1}):
\begin{eqnarray}\label{in1.5}
\left\{ \begin{array}{ll}
\ddot{x}(t)+\gamma(t)\dot{x}(t)+\nabla f(x(t))+A^\top (\lambda(t)+\delta(t)\dot{\lambda}(t))+A^\top( Ax(t)+B y(t)-b)=0,\\
\ddot{y}(t)+\gamma(t) \dot{y}(t)+\nabla g(y(t))+B^\top(\lambda(t)+\delta(t)\dot{\lambda}(t)+B^\top( Ax(t)+B y(t)-b)=0,\\
\ddot{\lambda }(t)+\gamma(t)\dot{\lambda}(t)-( A( x(t)+\delta(t) \dot{x}(t))+B(y(t)+\delta(t)\dot{y}(t))-b)=0,
\end{array}
 \right.
\end{eqnarray}
where $t\geq t_0>0$, $\gamma:[t_0,+\infty)\rightarrow[0,+\infty)$ is the viscous damping function, and $\delta:[t_0,+\infty)\rightarrow[0,+\infty)$ is the extrapolation function. Following the Lyapunov analysis method, they investigate the asymptotic convergence rate of the dynamical system (\ref{in1.5}) and show that the obtained results are robust under external perturbations. Attouch et al. \cite{acfr2022} give the following time-rescaled inertial augmented Lagrangian system with viscous damping, extrapolation and time scaling:
\begin{eqnarray}\label{in1.4}
\small{\left\{ \begin{array}{ll}
\ddot{x}(t)+\gamma(t)\dot{x}(t)+\beta(t)\left( \nabla f(x(t))+A^\top\left( \lambda(t)+a(t)\dot{\lambda}(t)+\mu( Ax(t)+B y(t)-b)\right)\right)=0,\\
\ddot{y}(t)+\gamma (t)\dot{y}(t)+\beta(t)\left( \nabla g(y(t))+B^\top\left(\lambda(t)+a(t)\dot{\lambda}(t)+\mu( Ax(t)+B y(t)-b)\right)\right)=0,\\
\ddot{\lambda }(t)+\gamma(t)\dot{\lambda}(t)-\beta(t)( A( x(t)+a(t) \dot{x}(t))+B(y(t)+a(t)\dot{y}(t))-b)=0,
\end{array}
 \right.}
\end{eqnarray}
where $\gamma(t)$ is the viscous damping parameter, $\beta(t)$ is the time scaling parameter, $a(t)$ is the extrapolation parameter and $\mu$ is the penalty parameter of the corresponding augmented Lagrangian function. They show that the second-order inertial system (\ref{in1.5}) has a provably fast convergence rate in solving problem (\ref{1.1}).

From the above, it can be seen that second-order time-continuous dynamical systems with fast convergence guarantees for solving the separable convex optimization problem (\ref{1.1}) have received less attention so far than unconstrained optimization problems. Moreover, to the best of our knowledge, there is no literature devoted to the study of the Tikhonov regularized second-order plus first-order primal-dual dynamical system for the separable convex optimization problem (\ref{1.1}). Motivated by the works \cite{HHF2022,hhf2021,acfr2022}, we will provide some new results for the following second-order plus first-order primal-dual dynamical system, which consists of two second-order ordinary differential equations for the primal variables and one first-order ordinary differential equation for the dual variable,
\begin{eqnarray}\label{1.3}
\left\{ \begin{array}{ll}
\ddot{x}(t)+\gamma \dot{x}(t)+\beta(t)\left( \nabla f(x(t))+A^\top \lambda(t)+A^\top( Ax(t)+B y(t)-b)+\epsilon(t)x(t)\right)=0,\\
\ddot{y}(t)+\gamma \dot{y}(t)+\beta(t)\left( \nabla g(y(t))+B^\top\lambda(t)+B^\top( Ax(t)+B y(t)-b)+\epsilon(t)y(t)\right)=0,\\
\dot{\lambda }(t)-\beta(t)\left( A( x(t)+\delta \dot{x}(t))+B(y(t)+\delta\dot{y}(t))-b\right)=0.
\end{array}
 \right.
\end{eqnarray}
Here, $t\geq t_0>0$, $\gamma$ is a constant damping coefficient, $\delta $ is a constant extrapolation coefficient, $ \beta:[t_0,+\infty)\rightarrow(0,+\infty)$ is the time scaling function, and $\epsilon:  [ t_0,+\infty )\rightarrow [0,+\infty)$ is the Tikhonov regularization function. In the sequel, we give the following assumptions: $\beta $ is a non-decreasing and continuously differentiable function, $\epsilon $ is continuously differentiable and non-increasing function with  $\lim\limits_{t\to+\infty}\epsilon(t)=0$. Note that the dynamical system (\ref{1.3}) involves the inertial terms only for the primal variables. Our contributions can be more specifically stated as follows:
\begin{itemize}
\item[{\rm (i)}] We introduce a second-order plus first-order primal-dual dynamical system with time scaling and Tikhonov regularization terms for the separable convex optimization problem (\ref{1.1}).
\item[{\rm (ii)}] When $\int_{t_0}^{+\infty}\beta(t)\epsilon (t)dt< +\infty$, we show that the convergence rates of the primal-dual gap, the objective function error and the feasibility measure are $\mathcal{O}\left(\frac{1}{\beta(t)}\right)$, $\mathcal{O}\left(\frac{1}{\sqrt{\beta(t)}}\right)$ and $\mathcal{O}\left(\frac{1}{\sqrt{\beta(t)}}\right)$, respectively. Under the assumptions $\int_{t_0}^{+\infty}\epsilon (t)dt< +\infty$ and $\lim\limits_{t\to+\infty}\beta (t)=+\infty$, we establish the minimal properties of the trajectories generated by the dynamical system (\ref{1.3}). Specially, when $\beta(t)=t^{r_1}$ and $\epsilon (t)=\frac{C}{t^{r_2}}$ with $1< r_2\leq r_1+1$, we prove that the convergence rate of the primal-dual gap depends on $r_2$.
\item[{\rm (iii)}] When $\lim\limits_{t\to+\infty}\beta (t)\epsilon (t)=+\infty$ and $\int_{t_0}^{+\infty}\epsilon (t)dt< +\infty$, we show that the trajectory generated by the dynamical system (\ref{1.3}) converges strongly to the minimal norm solution of the separable convex optimization problem (\ref{1.1}).
    \item[\rm(iv)] Through some numerical examples, we demonstrate that the dynamical system (\ref{1.3}) performs better than the dynamical systems (\ref{in1.5}) and (\ref{in1.4}) in terms of energy error and feasibility gap.
\end{itemize}

The rest of the paper is organized as follows. In Sections 2, we recall some basic notions and present some preliminary results. In Section 3, we show the existence and uniqueness of the solution trajectory of the dynamical system (\ref{1.3}). In Section 4, we establish the convergence rates of the primal-dual gap, the objective function error, the feasibility measure and the gradient norm of the objective function along the trajectory generated by the dynamical system (\ref{1.3}). Furthermore, we also obtain the minimal properties of the trajectory generated by the dynamical system (\ref{1.3}). In Section 5, we show that the primal trajectory converges strongly to the minimal norm solution of problem (\ref{1.1}). In Section 6, we give numerical experiments to illustrate the theoretical results.

\section{Preliminaries}

Throughout this paper, let $\mathcal{X}$, $\mathcal{Y}$ and $\mathcal{Z}$ be real Hilbert spaces equipped with the inner product $ \langle\cdot,\cdot \rangle$ and the norm $\|\cdot\|$.  For any $x\in \mathcal{X}$ and $y\in \mathcal{Y}$, the norm of the Cartesian product $\mathcal{X}\times\mathcal{Y}$ is defined by
$$\| ( x,y )\|=\sqrt{\| x \|^2+ \| y \|^2}.$$
The closed ball centered at $0\in \mathcal{X}$ with radius $r > 0$ is denoted as $\mathbb{B}(0,r) :=\{x \in \mathcal{X }: \|x\|\leq r\}$,  and the notation ${L}_{loc}^{1}  ([ t_0,+\infty ))$ denotes the family of locally integrable functions. Let $\varphi:\mathcal{X}\rightarrow \mathbb{R}$ be a real-valued function. For any $\sigma>0$, we say that $\varphi$ is a $\sigma $-strongly convex function iff $ \varphi(\cdot)-\frac{\sigma}{2}\|\cdot\|$ is a convex function. Moreover, we say that the gradient of $\varphi$ is Lipschitz continuous on $\mathcal{X}$ iff there exists $0<l<+\infty$ such that
$$
\|\nabla \varphi (x_1)-\nabla \varphi (x_2)\|\leq l\|x_1-x_2\|,~~~~\forall x_1,x_2 \in \mathcal{X}.
$$
The Lagrangian function associated with problem  (\ref{1.1}) is defined as:
$$
L ( x,y,{\lambda})=f(x)+g(y )+\langle {\lambda} ,{Ax+By-b}  \rangle.
$$
Naturally, the augmented Lagrangian function associated with problem (\ref{1.1}) is defined as:
$$
\mathcal{L}( x,y,{\lambda})=f(x)+g(y)+  \langle {\lambda} ,{Ax+By-b} \rangle+\frac{1}{2}{\| Ax+By-b\|}^2.
$$
We denote the partial derivative of $\mathcal{L}$ with respect to the first argument by $\nabla _x\mathcal{L}$, and with respect to the second argument by $\nabla _y\mathcal{L}$. In addition, the associated Lagrange dual problem and the augmented Lagrange dual problem of  problem (\ref{1.1}) are:
\begin{equation*}
\max_{\lambda\in\mathcal{Z} }{\min_{x\in\mathcal{X},y\in\mathcal{Y}}{L(x,y,\lambda)}}
\end{equation*}
and
\begin{equation}\label{1.6}
\max_{\lambda\in\mathcal{Z} }{\min_{x\in\mathcal{X},y\in\mathcal{Y}}{\mathcal{L}( x,y,\lambda)}}.
\end{equation}
Denoted by $\Omega$ the saddle point set of problem $(\ref{1.1})$. Then, $( x^*,y^*,{\lambda}^* )\in \Omega$ if and only if
\begin{eqnarray}\label{1.2}
\left\{ \begin{array}{ll}
-A^\top\lambda^*=\nabla f( x^*),\\
-B^\top\lambda^*=\nabla g( y^*),\\
Ax^*+By^*-b=0.
\end{array}
 \right.
\end{eqnarray}
Obviously, $(\ref{1.2})$ can be rewritten as:
\begin{eqnarray}\label{1.5}
\left\{ \begin{array}{ll}
\nabla f( x^*)+A^\top\lambda^*+A^T( Ax^*+By^*-b)=0,\\
\nabla g( y^*)+B^\top\lambda^*+B^T( Ax^*+By^*-b)=0,\\
Ax^*+By^*-b=0.
\end{array}
 \right.
\end{eqnarray}
This means that
\begin{eqnarray*}
\left\{ \begin{array}{ll}
\nabla _x\mathcal{L}( x^*,y^*,\lambda^* )=0,\\
\nabla _y\mathcal{L}( x^*,y^*,\lambda^*  )=0,\\
\nabla _\lambda \mathcal{L}(x^*,y^*,\lambda^*)=0.
\end{array}
 \right.
\end{eqnarray*}
Thus, $\mathcal{L}( x,y,\lambda)$ and  $L(x,y,\lambda)$ have  the same set $\Omega$ of saddle points.

In what follows, we assume that $\Omega \neq\emptyset$. Then, the solution set $S$ of problem (\ref{1.1}) is nonempty. It is well-known that $( x^*,y^*)\in S$ if and only if there exists a solution ${\lambda} ^*$ of problem $(\ref{1.6})$ such that $( x^*,y^*,{\lambda} ^*)\in\Omega $. This means that
$(x^*,y^*,{\lambda} ^*)\in\Omega $ if and only if\\
\begin{equation}\label{5555}
\mathcal{L}( x^*,y^*,{\lambda})\leqslant \mathcal{L}( x^*,y^*,{\lambda}^*)\leqslant \mathcal{L}( x,y,{\lambda} ^*),~\forall (x,y,{\lambda})\in \mathcal{X}\times\mathcal{Y}\times \mathcal{Z}.
\end{equation}
For any $\lambda$, we set
\begin{equation*}
d( \lambda ):=\min_{x\in\mathcal{X},y\in\mathcal{Y}}{\mathcal{L}( x,y,\lambda)}
~~~
\mbox{and}
~~~
D( \lambda ):=\mathop{\textup{argmin}}\limits_{x\in\mathcal{X},y\in\mathcal{Y}}{\mathcal{L}( x,y,\lambda)}.
\end{equation*}
Obviously, $d(\lambda)$ is differentiable (see \cite[Chapter III: Remark 2.5]{fg1983}) and
\begin{equation*}
\nabla d( \lambda)=Ax( \lambda)+By( \lambda )-b,
\end{equation*}
where $(x(\lambda), y(\lambda))\in D(\lambda)$. Moreover, if $\lambda^*$ is a solution of problem $(\ref{1.6})$, then $$Ax( \lambda ^*)+By( \lambda^*)=b , ~~ \forall (x(\lambda^*),y(\lambda^*))\in D(\lambda^*).$$ This, together with $(\ref{1.5})$, gives
\begin{equation}\label{1.4}
D ( \lambda ^* )=  \left\{  ( x,y )\in \mathcal{X}\times\mathcal{Y}~|~( x,y )\in\mathop{\textup{argmin}}\limits_{x\in\mathcal{X},y\in\mathcal{Y}}{\mathcal{L}( x,y,\lambda)}\right\}\subseteq S.
\end{equation}

The following important property will be used in the sequel.
\begin{lemma}\label{lem2.1}\textup{\cite[Lemma A.3.]{ACr2018}}
Suppose that $\delta>0$, $\phi \in {L}^1([ \delta ,+\infty))$ is a nonnegative and continuous  function, and $\psi :[ \delta ,+\infty)\rightarrow (0,+\infty )$ is a nondecreasing function such that $\lim_{t\to+\infty}\psi (t )=+\infty$. Then,
\begin{equation*}
\lim_{t\to+\infty}\frac{1}{\psi(t)}\int_{\delta }^{t}\psi(s)\phi(s)ds=0.
\end{equation*}
\end{lemma}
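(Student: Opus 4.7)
The strategy is a standard $\varepsilon$-argument that splits the integral at a threshold $T$ chosen from the $L^{1}$ integrability of $\phi$, and then controls the two pieces separately using $\psi(t)\to+\infty$ on one side and the monotonicity of $\psi$ on the other. The only structural features of $\phi$ I will actually need are $\int_{\delta}^{+\infty}|\phi(s)|ds<+\infty$ together with enough regularity to make the integrals well-defined; the $C^{1}$ assumption is not essential for this particular statement.

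To execute the plan I would fix $\varepsilon>0$. Using $\phi\in L^{1}(\delta,+\infty)$, I pick $T>\delta$ large enough that
$$\int_{T}^{+\infty}|\phi(s)|\,ds<\frac{\varepsilon}{2}.$$
With this $T$ fixed, for $t>T$ I write
$$\frac{1}{\psi(t)}\int_{\delta}^{t}\psi(s)\phi(s)\,ds=\frac{1}{\psi(t)}\int_{\delta}^{T}\psi(s)\phi(s)\,ds+\frac{1}{\psi(t)}\int_{T}^{t}\psi(s)\phi(s)\,ds.$$
The first summand has a fixed numerator $C_{T}:=\int_{\delta}^{T}\psi(s)\phi(s)\,ds$, which is finite because $\psi\phi$ is continuous on the compact interval $[\delta,T]$. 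Since $\psi(t)\to+\infty$, there is a threshold $T_{1}>T$ such that $|C_{T}|/\psi(t)<\varepsilon/2$ for every $t>T_{1}$. For the second summand I use that $\psi$ is positive and nondecreasing, so $0<\psi(s)\leq\psi(t)$ whenever $s\in[T,t]$, whence
$$\left|\frac{1}{\psi(t)}\int_{T}^{t}\psi(s)\phi(s)\,ds\right|\leq\frac{1}{\psi(t)}\int_{T}^{t}\psi(s)|\phi(s)|\,ds\leq\int_{T}^{t}|\phi(s)|\,ds<\frac{\varepsilon}{2}.$$
Combining the two estimates yields a bound strictly below $\varepsilon$ for every $t>T_{1}$, which is exactly the claimed limit.

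The argument has essentially no hard step; the only point requiring care is the order of the choices, namely first fixing $T$ from the tail integrability of $\phi$ alone, and only then choosing $T_{1}$ using both $T$ and the divergence of $\psi$, so that the two pieces are bounded independently without circularity. An alternative route through integration by parts on $F(t)=\int_{\delta}^{t}\phi(s)\,ds$ is also viable, but because $\psi$ is only assumed nondecreasing (hence potentially nondifferentiable) it would require either a Riemann–Stieltjes formulation or an absolute-continuity upgrade of $\psi$; the splitting argument above sidesteps that technicality completely and is therefore the cleanest route to the result.
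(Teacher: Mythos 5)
The paper does not prove this lemma at all; it is quoted verbatim from Abbas--Attouch--Svaiter \cite[Lemma 5.1]{aas2014}, so there is no in-paper argument to compare against. Your splitting proof is correct and is exactly the standard (and, in the cited reference, essentially the same) argument: the tail piece is controlled by monotonicity of $\psi$ via $\psi(s)\leq\psi(t)$, and the head piece is killed by $\psi(t)\to+\infty$; the order of quantifiers is handled properly. One small imprecision: $\psi$ is only assumed nondecreasing, not continuous, so the justification that $C_T$ is finite ``because $\psi\phi$ is continuous on $[\delta,T]$'' is not quite right; instead, a nondecreasing function is bounded by $\psi(T)$ and measurable on $[\delta,T]$, so $|\psi\phi|\leq\psi(T)|\phi|$ is integrable there, which gives the same conclusion. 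You are also right that the $C^1$ hypothesis on $\phi$ is not needed for this statement.
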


\section{Existence and Uniqueness of the Solution Trajectory}

In this section, we establish the existence and uniqueness of the global solution of the dynamical system $(\ref{1.3})$.

\begin{theorem}
Suppose that $\nabla f$ is $l_1$-Lipschitz continuous on $\mathcal{X}$ with $l_1>0$, and $\nabla g$ is $l_2$-Lipschitz continuous on $\mathcal{Y}$ with $l_2>0$. Let $\epsilon(t)$, $\beta(t)$  and $\beta (t)\epsilon(t)\in{L}_{loc}^{1}([ t_0,+\infty))$. Then, for any given initial point $( x( t_0),y( t_0 ),\lambda  ( t_0),\dot{x}(t_0),\dot{y}( t_0))\in\mathcal{X}\times\mathcal{Y}\times\mathcal{Z}\times\mathcal{X}\times\mathcal{Y}$,
the dynamical system $(\ref{1.3})$ has a unique global solution.
\end{theorem}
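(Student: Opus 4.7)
The plan is to reduce the second-order system to a first-order one and apply a Cauchy--Lipschitz (Picard--Lindelöf) theorem in the Carathéodory sense, then extend the local solution globally via an a priori linear-growth estimate.

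First, I would introduce auxiliary variables $u(t) := \dot{x}(t)$ and $v(t) := \dot{y}(t)$ and set $Z(t):=(x(t),y(t),\lambda(t),u(t),v(t))$ in the product Hilbert space $\mathcal{H}:=\mathcal{X}\times\mathcal{Y}\times\mathcal{Z}\times\mathcal{X}\times\mathcal{Y}$. The system $(\ref{1.3})$ then rewrites as $\dot{Z}(t)=F(t,Z(t))$ with $F(t,\cdot)=(F_1,F_2,F_3,F_4,F_5)$ given by
\begin{equation*}
F_1=u,\quad F_2=v,\quad F_3=\beta(t)\bigl(A(x+\delta u)+B(y+\delta v)-b\bigr),
\end{equation*}
\begin{equation*}
F_4=-\gamma u-\beta(t)\bigl(\nabla f(x)+A^\top\lambda+A^\top(Ax+By-b)+\epsilon(t)x\bigr),
\end{equation*}
\begin{equation*}
F_5=-\gamma v-\beta(t)\bigl(\nabla g(y)+B^\top\lambda+B^\top(Ax+By-b)+\epsilon(t)y\bigr),
\end{equation*}
with initial condition $Z(t_0)\in\mathcal{H}$ prescribed.

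Next I would verify the Carathéodory hypotheses. For each fixed $Z\in\mathcal{H}$ the map $t\mapsto F(t,Z)$ is measurable and, owing to the integrability assumptions $\beta,\epsilon,\beta\epsilon\in L^1_{loc}[t_0,+\infty)$ together with the boundedness of the linear operators $A,B$, it lies in $L^1_{loc}([t_0,+\infty);\mathcal{H})$. For each fixed $t$, the $L_1$-Lipschitz continuity of $\nabla f$ and the $L_2$-Lipschitz continuity of $\nabla g$, combined with the linearity of the remaining terms, yield a Lipschitz estimate of the form
\begin{equation*}
\|F(t,Z_1)-F(t,Z_2)\|_{\mathcal{H}}\leq k(t)\,\|Z_1-Z_2\|_{\mathcal{H}},\qquad \forall Z_1,Z_2\in\mathcal{H},
\end{equation*}
where $k(t)$ depends only on $\gamma,\delta,\|A\|,\|B\|,l_1,l_2,\beta(t),\beta(t)\epsilon(t)$ and hence belongs to $L^1_{loc}[t_0,+\infty)$. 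The Cauchy--Lipschitz theorem for Carathéodory equations (see e.g.\ the classical Haraux formulation) then produces a unique absolutely continuous local solution on some maximal interval $[t_0,T_{\max})$.

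Finally, to rule out finite-time blow-up and obtain $T_{\max}=+\infty$, I would establish a linear growth bound $\|F(t,Z)\|_{\mathcal{H}}\leq a(t)+b(t)\|Z\|_{\mathcal{H}}$ for some $a,b\in L^1_{loc}[t_0,+\infty)$, using once more the Lipschitz bounds on $\nabla f,\nabla g$ to write $\|\nabla f(x)\|\leq l_1\|x\|+\|\nabla f(0)\|$ and similarly for $\nabla g$. Integrating the ODE and applying Gronwall's inequality on any compact sub-interval $[t_0,T]\subset[t_0,T_{\max})$ gives an upper bound on $\|Z(t)\|_{\mathcal{H}}$ that remains finite as $t\uparrow T_{\max}$, contradicting maximality unless $T_{\max}=+\infty$. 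The main technical point to be careful with is keeping track of the time-dependent coefficients $\beta(t)$ and $\beta(t)\epsilon(t)$ in both the Lipschitz and linear-growth estimates so that everything reduces to local integrability of the given data; once that bookkeeping is done, the conclusion follows by a standard continuation argument.
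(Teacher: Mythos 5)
Your proposal is correct and follows essentially the same route as the paper: reduce to a first-order system in the product space, establish a time-dependent Lipschitz estimate and a linear-growth estimate with locally integrable coefficients (using the Lipschitz continuity of $\nabla f,\nabla g$ and the local integrability of $\beta$, $\epsilon$ and $\beta\epsilon$), and invoke the global Cauchy--Lipschitz theorem of Haraux. The only cosmetic difference is that you split the argument into local existence plus a Gronwall continuation step, whereas the paper cites a single result that delivers the global solution directly from the two estimates.
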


\begin{proof}
Set $\mu(t):=\dot{x}(t)$, $\nu(t):=\dot{y}(t)$ and $Z(t):=(x(t),y(t),\lambda(t),\mu(t),\nu(t))$. Then, the dynamical system $(\ref{1.3})$ becomes
\begin{eqnarray*}
\left\{ \begin{array}{ll}
\dot{Z}(t)+G\left ( t,Z(t)\right )=0,\\
Z  ( t_0)=( x( t_0),y( t_0),\lambda( t_0),\mu ( t_0),\nu ( t_0)),
\end{array}
 \right.
\end{eqnarray*}
where
\begin{eqnarray}\label{exist0}\small{
G ( t,Z( t))=\begin{pmatrix}-\mu(t)
\\ -\nu(t)
\\ -\beta(t)(A(x(t)+\delta \mu(t))+B( y(t)+\delta\nu(t) )-b )
\\ \gamma \mu(t)+\beta(t)(\nabla f (x(t))+A^\top\lambda(t) +A^\top( Ax(t)+By(t)-b )+\epsilon(t)x(t) )
\\ \gamma \nu(t) +\beta(t)(\nabla g (y(t))+B^\top\lambda(t) +B^\top( Ax(t)+By(t)-b)+\epsilon(t)y(t))
\end{pmatrix}.}
\end{eqnarray}
Clearly, for any $Z(t)$ and $\bar{Z}(t)\in\mathcal{X}\times\mathcal{Y}\times\mathcal{Z}\times\mathcal{X}\times\mathcal{Y}$, we have
\begin{eqnarray}\label{exist1}
\begin{split}
\|G(t,Z(t))-G(t,\bar{Z}(t))\|\leq &(1+\gamma +\delta \beta(t)\| A\|)\|\mu(t)-\bar{\mu }(t)\|\\
&+(1+\gamma +\delta \beta(t)\|B\|)\| \nu(t) -\bar{\nu}(t)\|\\
&+\beta(t)\left(\| A^\top\|+\| B^\top\|\right)\|\lambda(t) -\bar{\lambda }(t)\|\\
&+\beta(t)\left(\| A\|+\| A^\top A\|+\| B^\top A\|+\epsilon(t)\right)\| x(t)-\bar{x}(t)\|\\
&+\beta(t)(\| B\|+ \| A^\top B \|+ \| B^\top B \|+\epsilon(t))\| y(t)-\bar{y}(t)\|\\
&+\beta(t)\| \nabla f(x(t))-\nabla f(\bar{x}(t))\|+\beta(t)\| \nabla g(y(t))-\nabla g(\bar{y}(t))\|.
\end{split}
\end{eqnarray}
Since $\nabla f$ is $l_1$-Lipschitz continuous  and $\nabla g$ is $l_2$-Lipschitz continuous, it follows from $(\ref{exist1})$ that
\begin{equation*}
\begin{split}
\| G ( t,Z(t))-G( t,\bar{Z}(t))\|
\leq &( 1+\gamma +\delta \beta (t)\| A\|)\| \mu(t)-\bar{\mu }(t)\|\\
&+( 1+\gamma +\delta \beta(t)\| B\|)\| \nu(t) -\bar{\nu }(t)\|\\
&+\beta(t)(\| A^\top\|+\| B^T\|)\| \lambda(t) -\bar{\lambda }(t)\|\\
&+\beta(t)(\| A\|+\| A^\top A\|+\| B^\top A\|+\epsilon(t))\| x(t)-\bar{x}(t)\|\\
&+\beta(t)(\| B\|+\| A^\top B\|+\| B^\top B\|+\epsilon(t))\| y(t)-\bar{y}(t)\|\\
&+\beta(t)l_1\| x(t)-\bar{x}(t)\|+\beta(t)l_2\| y(t)-\bar{y}(t)\|\\
\leq &( 2C_1+\delta \beta(t)(\| A\|+\|B\|)+3C_1\beta(t)+2\beta(t)\epsilon(t))\| Z(t)-\bar{Z}(t)\|,
\end{split}
\end{equation*}
where
\begin{equation*}
C_1:=\max \left\{1+\gamma ,  \|A^\top\|+  \|B^\top\| , \|A\|+ \|A^\top A\|+\|B^\top A\|+l_1,\|B\|+\|A^\top B\|+ \|B^\top B\|+l_2\right\}.
\end{equation*}
Set $$K(t):= 2C_1+\delta \beta(t) ( \| A\|+\|B\| )+3C_1\beta(t)+2\beta(t)\epsilon(t).$$ Clearly, $K(t)\in{L}_{loc}^{1}([ t_0,+\infty))$ and
\begin{equation*}
  \| G( t,Z(t) )-G ( t,\bar{Z}(t)) \|\leq K (t) \| Z(t)-\bar{Z}(t)\|.
\end{equation*}

On the other hand, it follows from $(\ref{exist0})$ that
\begin{eqnarray}\label{exist3}
\begin{split}
\| G( t,Z(t))\|\leq& (1+\gamma +\delta \beta(t) \| A \| ) \| \mu(t)\|+ (1+\gamma +\delta \beta(t) \|B\| ) \|\nu(t)\|\\
&+\beta(t)( \| A^\top \|+\| B^\top \|)\| \lambda(t) \|\\
&+\beta (t) ( \| A \|+ \| A^\top A\|+\| B^\top A\|+\epsilon(t)) \| x(t)\|\\
&+\beta(t)(\| B\|+\| A^\top B\|+  \| B^\top B  \|+\epsilon(t))  \| y(t)\|+\beta(t) \| b \|\\
&+\beta(t)(\| \nabla f( x(t)) \|+\|\nabla g ( y(t))\|+\| A^\top b\|+\| B^\top b\|).
\end{split}
\end{eqnarray}
Since $\nabla f$ is $l_1$-Lipschitz continuous and $\nabla g$ is $l_2$-Lipschitz continuous, we have
\begin{equation*}
\|\nabla f( x(t))-\nabla f(0)\|\leq l_1\| x(t) \|\leq l_1  \| Z(t)\|,
\end{equation*}
and
\begin{equation*}
\|\nabla g(y(t))-\nabla g(0)\|\leq l_2 \| y(t)\|\leq l_2\| Z(t)\|.
\end{equation*}
Combing these with  $(\ref{exist3})$, we have
\begin{eqnarray}\label{exist4}
\begin{split}
\| G( t,Z(t))\|\leq&( 1+\gamma +\delta \beta(t)\| A \|)\|Z(t)\|+ ( 1+\gamma +\delta \beta(t)\| B \| ) \| Z(t) \|\\
&+\beta(t) (\| A^\top\|+\| B^\top \|)\| Z(t)\|\\
&+\beta(t)(\| A\|+\| A^\top A\|+\| B^\top A\|+\epsilon(t))  \| Z(t) \|\\
&+\beta(t) (\| B\|+\| A^\top B\|+ \| B^\top B \|+\epsilon(t) ) \| Z(t) \|+\beta(t)\| b\|\\
&+\beta(t)(( l_1+l_2)\| Z(t)\|+ \|\nabla f(0)\|+\|\nabla g(0)\|+\| A^\top b\|+\| B^\top b\|)\\
\leq &( 1+\gamma +\delta \beta(t)\| A\|)( 1+\| Z(t)\|)
+(1+\gamma +\delta \beta(t)\| B\|)( 1+\| Z(t)\|)\\
&+\beta(t)( \| A^\top\|+\| B^\top \|)( 1+ \| Z(t)\|)\\
&+\beta(t)(\| A\|+ \| A^\top A \|+\| B^\top A \|+\epsilon(t)) ( 1+\| Z(t)\|)\\
&+\beta(t) (\| B \|+\| A^\top B \|+ \| B^\top B\|+\epsilon(t)) ( 1+\| Z(t)\| )\\
&+\beta(t)\| b \| ( 1+\| Z(t)\|)+C_2\beta(t)( 1+\| Z(t)\| ),
\end{split}
\end{eqnarray}
where
$$
C_2:=\max{\{ l_1+l_2,\|\nabla f(0)\|+\|\nabla g(0)\|+\| A^Tb\|+\| B^Tb\|\}}.$$
Now, let
$$C_3:=\max{ \{ 1+\gamma ,\| A^\top \|+\| B^\top\|,\| A\|+\| A^\top A\|+\| B^\top A\|,\| B\|+\| B^\top B\|+\| A^\top B \|,\|b\|,C_2\}}.
$$
Then, it follows from  $(\ref{exist4})$ that
\begin{equation*}
\begin{split}
\| G( t,Z(t))\|\leq &( C_3+\delta \beta(t)\| A \|)( 1+ \| Z(t) \| )+( C_3 +\delta \beta(t) \| B \| ) ( 1+ \| Z(t)\|)\\
&+3C_3\beta (t) ( 1+  \| Z(t)\| )+2\beta(t)( C_3+\epsilon(t))  ( 1+ \| Z(t)\| )\\
=&(2C_3+\delta\beta(t)(\|A\|+\|B\|)+5C_3\beta(t)+2\beta(t)\epsilon(t))( 1+\| Z(t)\|).
\end{split}
\end{equation*}
Moreover, let
$$
S(t):=2C_3+\delta\beta(t)(\|A\|+\|B\|)+5C_3\beta(t)+2\beta(t)\epsilon(t).
$$
Clearly, $S(t)\in{L}_{loc}^{1} ([ t_0,+\infty))$ and
\begin{equation*}
\|G(t,Z(t))\|\leq S(t)(1+ \|Z(t)\|),~~\forall Z(t)\in\mathcal{X}\times\mathcal{Y}\times\mathcal{Z}\times\mathcal{X}\times\mathcal{Y}.
\end{equation*}
According to \cite[Proposition 6.2.1]{ah1991}, for any given initial point $( x( t_0),y( t_0 ),\lambda  ( t_0),\dot{x}(t_0),\dot{y}( t_0))\in\mathcal{X}\times\mathcal{Y}\times\mathcal{Z}\times\mathcal{X}\times\mathcal{Y}$,
the dynamical system $(\ref{1.3})$ has a unique global solution. \qed
\end{proof}

\section{Asymptotic Properties of the Dynamical System (\ref{1.3})}

In this section, by constructing appropriate energy functions, we establish some asymptotic convergence properties of the primal-dual gap, the objective function error, the feasibility violation and the gradient norm of the objective function along the trajectory generated by the dynamical system ${(\ref{1.3})}$. In what follows, we suppose that $f$ and $g$ are continuously differentiable convex functions, $\nabla f$ is $l_1$-Lipschitz continuous on $\mathcal{X}$ with $l_1>0$ and $\nabla g$ is $l_2$-Lipschitz continuous on $\mathcal{Y}$ with $l_2>0$.
\begin{theorem}\label{the4.1}
Let $\epsilon : [ t_0,+\infty )\rightarrow [0,+\infty  )$ and $\beta : [ t_0,+\infty )\rightarrow [0,+\infty )$ such that
\begin{equation}\label{assump}
\int_{t_0}^{+\infty}\beta(t)\epsilon (t)dt< +\infty,~~\dot{\beta }(t)\leqslant \frac{1}{\delta }\beta (t),~~\frac{1}{\delta}< \gamma.
\end{equation}
Suppose that $( x,y,\lambda ):[ t_0,+\infty)\rightarrow\mathcal{X}\times\mathcal{Y}\times\mathcal{Z}$ is a solution of the dynamical system \textup{(\ref{1.3})}. Then, for any $( x^*,y^*,\lambda ^*)\in \Omega$, the trajectory $\left\{( x(t),y(t),\lambda(t))\right\}_{t\geq t_0}$ is bounded and the following statements are satisfied:
\begin{enumerate}
\item[{\rm (i)}] $($Convergence rate results$)$
\begin{eqnarray*}
\begin{split}
&\mathcal{L} ( x( t), y( t ),\lambda ^*)-\mathcal{L} ( x^*,y^*,\lambda ^* )=\mathcal{O}\left( \frac{1}{\beta ( t )} \right), ~\textup{as}~ t\rightarrow+\infty,\\
&\| \varPhi ( x(t),y(t))-\varPhi ( x^*,y^*)\|=\mathcal{O} \left( \frac{1}{\sqrt{\beta ( t )}} \right), ~\textup{as}~ t\rightarrow+\infty,\\
&\left \| Ax\left ( t\right )+B y\left ( t\right )-b\right \|=\mathcal{O}\left ( \frac{1}{\sqrt{\beta (t)}}\right ), ~\textup{as}~ t\rightarrow+\infty,\\
&\| \nabla f( x( t))-\nabla f( x^*)\|=\mathcal{O}\left ( \frac{1}{\sqrt{\beta (t)}}\right ), ~\textup{as} ~t\rightarrow+\infty,\\
&\| \nabla g ( y(t))-\nabla g( y^*)\|=\mathcal{O}\left ( \frac{1}{\sqrt{\beta (t)}}\right ), ~\textup{as} ~t\rightarrow+\infty.
\end{split}
\end{eqnarray*}
\item[{\rm (ii)}] $($Integral estimate results$)$
\begin{eqnarray*}
\begin{split}
&\int_{t_0}^{+\infty}\frac{\delta \gamma-1 }{\delta }\left(\| \dot{x}(t)\|^2+\| \dot{y}(t)\|^2\right)dt< +\infty,\\
&\int_{t_0}^{+\infty}\left ( \frac{\beta(t)}{\delta }-\dot{\beta }(t)\right )( \mathcal{L}( x(t),y(t),\lambda ^*)-\mathcal{L}( x^*,y^*,\lambda^*))dt< +\infty,\\
&\int_{t_0}^{+\infty}\frac{\beta (t)\epsilon(t)}{2\delta }\left( \| x(t)-x^*\|^2+\| y(t)-y^* \|^2\right)dt< +\infty,\\
&\int_{t_0}^{+\infty}\beta(t)\| Ax(t)+By(t)-b\|^2dt< +\infty.
\end{split}
\end{eqnarray*}
\end{enumerate}
\end{theorem}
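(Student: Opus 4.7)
The plan is to carry out a Lyapunov analysis built around the augmented Lagrangian gap $\mathcal{L}(x(t),y(t),\lambda^*)-\mathcal{L}(x^*,y^*,\lambda^*)$, which is non-negative by the saddle-point inequality (\ref{5555}). Concretely, I would introduce an energy function of the form
\begin{equation*}
E(t) = \beta(t)\bigl(\mathcal{L}(x(t),y(t),\lambda^*)-\mathcal{L}(x^*,y^*,\lambda^*)\bigr) + \tfrac{1}{2}\|\delta\dot{x}(t)+x(t)-x^*\|^2 + \tfrac{1}{2}\|\delta\dot{y}(t)+y(t)-y^*\|^2 + \tfrac{\delta^2}{2}\|\lambda(t)-\lambda^*\|^2,
\end{equation*}
in which the coefficient $\delta$ is chosen so that the velocity-anchor squares are compatible with the extrapolation coefficient appearing in the third equation of (\ref{1.3}). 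The first term is designed to produce the target rate $\mathcal{O}(1/\beta(t))$; the anchor terms absorb the second-order part of the dynamics; and the dual distance term couples with the first-order equation for $\lambda$.

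First I would differentiate $E(t)$ and, using the three equations of (\ref{1.3}), replace $\ddot{x}$, $\ddot{y}$ and $\dot{\lambda}$ to express $\dot{E}(t)$ purely in terms of $x,y,\lambda,\dot{x},\dot{y}$, plus the Lagrangian gradients evaluated at $(x,y,\lambda)$. The anchor terms generate $\langle \delta\ddot{x}+\dot{x},\ \delta\dot{x}+x-x^*\rangle$, which after substitution yields a $-(\gamma-1/\delta)\|\dot{x}\|^2$ contribution together with a coupling of the form $-\delta\beta(t)\langle \nabla_x\mathcal{L}(x,y,\lambda)+\epsilon(t)x,\ \delta\dot{x}+x-x^*\rangle$; the dual distance term contributes $\delta^2\beta(t)\langle Ax+By-b+A\delta\dot{x}+B\delta\dot{y},\ \lambda-\lambda^*\rangle$. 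Grouping the $\lambda^*-\lambda$ cross-terms with the Lagrangian gradients converts them into $\nabla_{x,y}\mathcal{L}(x,y,\lambda^*)$, after which I would apply convexity of $\mathcal{L}(\cdot,\cdot,\lambda^*)$ to bound
\begin{equation*}
\langle \nabla_x\mathcal{L}(x,y,\lambda^*),\ x-x^*\rangle + \langle \nabla_y\mathcal{L}(x,y,\lambda^*),\ y-y^*\rangle \geq \mathcal{L}(x,y,\lambda^*)-\mathcal{L}(x^*,y^*,\lambda^*) + \tfrac{1}{2}\|Ax+By-b\|^2,
\end{equation*}
where the extra quadratic comes from the $\tfrac{1}{2}\|Ax+By-b\|^2$ term inside $\mathcal{L}$. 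The Tikhonov contribution $-\delta\beta(t)\epsilon(t)\langle x,\delta\dot{x}+x-x^*\rangle$ would be split via the identity $\langle x,x-x^*\rangle=\tfrac{1}{2}\|x-x^*\|^2+\tfrac{1}{2}\|x\|^2-\tfrac{1}{2}\|x^*\|^2$ so that the good quadratic $\|x-x^*\|^2$ is kept and the remainder is absorbed into a $\beta(t)\epsilon(t)\|x^*\|^2$-type perturbation (and similarly for $y$).

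Assembling these identities, together with the hypothesis $\dot{\beta}(t)\leq \beta(t)/\delta$ and $\gamma\delta>1$, the expected differential inequality is
\begin{equation*}
\dot{E}(t) + \left(\tfrac{\beta(t)}{\delta}-\dot{\beta}(t)\right)(\mathcal{L}(x,y,\lambda^*)-\mathcal{L}^*) + \tfrac{\gamma\delta-1}{\delta}(\|\dot{x}\|^2+\|\dot{y}\|^2) + \tfrac{\beta(t)\epsilon(t)}{2\delta}(\|x-x^*\|^2+\|y-y^*\|^2) + \beta(t)\|Ax+By-b\|^2 \leq \tfrac{\beta(t)\epsilon(t)}{2\delta}(\|x^*\|^2+\|y^*\|^2).
\end{equation*}
Integrating from $t_0$ to $t$ and invoking $\int_{t_0}^{+\infty}\beta(s)\epsilon(s)ds<+\infty$ yields boundedness of $E(t)$, which is simultaneously the four integral estimates in (ii) and the rate $\mathcal{L}(x(t),y(t),\lambda^*)-\mathcal{L}^* = \mathcal{O}(1/\beta(t))$. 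The bound on $\tfrac{\delta^2}{2}\|\lambda-\lambda^*\|^2$ gives boundedness of $\lambda(t)$, and the anchor bounds together with the kinetic integral imply boundedness of $(x(t),y(t))$. The feasibility rate $\|Ax+By-b\|=\mathcal{O}(1/\sqrt{\beta(t)})$ follows by combining the primal-dual gap with $\mathcal{L}(x,y,\lambda^*)-\mathcal{L}^*\geq \tfrac{1}{2}\|Ax+By-b\|^2 + \langle\lambda^*,Ax+By-b\rangle$ via standard Fenchel-type manipulations; the objective-error rate in $\Phi$ then follows by sandwiching $\Phi(x,y)-\Phi(x^*,y^*)$ between the dual gap and a multiple of $\|Ax+By-b\|$. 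Finally, the gradient-norm rates for $\nabla f$ and $\nabla g$ come from descent/cocoercivity: $\|\nabla f(x)-\nabla f(x^*)\|^2 \leq 2l_1\bigl(f(x)-f(x^*)-\langle \nabla f(x^*),x-x^*\rangle\bigr)$, in which $\nabla f(x^*)=-A^\top\lambda^*$ by (\ref{1.2}), and the right-hand side is dominated by the already-established primal-dual gap plus $\|\lambda^*\|\cdot\|Ax+By-b\|$.

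The main obstacle I anticipate is Step~2: keeping track of all cross-terms so that the coefficient $\gamma-1/\delta$ comes out strictly positive on $\|\dot{x}\|^2+\|\dot{y}\|^2$, and so that the Tikhonov contribution produces a non-negative $\beta(t)\epsilon(t)\|x-x^*\|^2$ on the left-hand side while the sign-indefinite remainder is controlled by the integrability hypothesis $\int \beta(t)\epsilon(t)dt<+\infty$. Everything else (the Lipschitz/cocoercivity passage to gradient norms, and the translation of the primal-dual gap into the objective error and feasibility rate) is largely routine once this master inequality is in hand.
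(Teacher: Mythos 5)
Your overall strategy is exactly the paper's: a Lyapunov function combining $\beta(t)$ times the augmented-Lagrangian gap, velocity--anchor squares, and a dual distance term, followed by the convexity estimate, integration, and the same passage from the gap to the feasibility, objective and gradient rates. However, the specific energy function you wrote down does not close, for two concrete reasons. First, the coefficients are mutually inconsistent: differentiating $\tfrac12\|\delta\dot x+x-x^*\|^2$ and substituting the dynamics produces the cross-term $-\delta^2\beta(t)\langle\dot x,\nabla_x\mathcal L(x,y,\lambda)\rangle$, whereas differentiating $\beta(t)(\mathcal L-\mathcal L^*)$ produces $+\beta(t)\langle\dot x,\nabla_x\mathcal L(x,y,\lambda^*)\rangle$; these cancel only if the gap term carries the factor $\delta^2$ (equivalently, if you rescale the anchors to $\tfrac12\|\tfrac1\delta(x-x^*)+\dot x\|^2$, as in (\ref{3.7})). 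Likewise the dual coefficient must be $\tfrac{\delta}{2}$ in your normalization (the paper's $\tfrac1{2\delta}$), not $\tfrac{\delta^2}{2}$, or the $\langle\dot\lambda,\lambda-\lambda^*\rangle$ contribution does not cancel the $A^\top(\lambda-\lambda^*)$, $B^\top(\lambda-\lambda^*)$ cross-terms coming from $\nabla_{x,y}\mathcal L(x,y,\lambda)-\nabla_{x,y}\mathcal L(x,y,\lambda^*)$.

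Second, and more substantively, your $E(t)$ is missing two families of terms without which $\dot E(t)$ contains uncontrolled sign-indefinite pieces. The anchor derivative yields not only $\delta(1-\delta\gamma)\|\dot x\|^2$ but also $(1-\delta\gamma)\langle\dot x,\,x-x^*\rangle$, which is neither signed nor integrable a priori; the paper absorbs it by including $\tfrac{\delta\gamma-1}{2\delta^2}\|x(t)-x^*\|^2$ (and the $y$-analogue) in the energy, which is also what actually delivers the boundedness of the primal trajectory. Similarly, the Tikhonov coupling $-\delta^2\beta(t)\epsilon(t)\langle x,\dot x\rangle$ is a derivative of nothing in your $E$; the paper handles it by placing $\tfrac{\beta(t)\epsilon(t)}{2}\|x(t)\|^2+\tfrac{\beta(t)\epsilon(t)}{2}\|y(t)\|^2$ inside $E$, and it is precisely there that the standing monotonicity assumptions ($\beta$ non-decreasing, $\epsilon$ non-increasing) together with $\dot\beta\le\beta/\delta$ are used to show $\tfrac12(\dot\beta\epsilon+\beta\dot\epsilon)\|x\|^2\le\tfrac{\beta\epsilon}{2\delta}\|x\|^2$. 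These additions are not routine bookkeeping after the fact --- they are part of the correct choice of Lyapunov function, and your master inequality is only obtainable once they are in place. With the energy replaced by (\ref{3.7}), the rest of your outline (the strong-convexity estimate (\ref{33convex}), the cocoercivity bound for the gradient norms, and the sandwich for $\varPhi$) matches the paper's proof.
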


\begin{proof} We first introduce the energy function $E:\left [ t_0,+\infty\right )\rightarrow\left [ 0,+\infty\right )$ as follows:
\begin{eqnarray}\label{3.7}
\begin{split}
E(t)=&~\beta(t)\left(\mathcal{L}(x(t),y(t),\lambda ^*)-\mathcal{L}(x^*,y^*,\lambda ^*)+\frac{\epsilon(t)}{2}\|x(t)\|^2+\frac{\epsilon(t)}{2}\|y(t)\|^2\right)\\
&+\frac{1}{2}\left\|\frac{1}{\delta}(x(t)-x^*)+\dot{x}(t)\right\|^2+\frac{\delta \gamma-1}{2\delta^2}\|x(t)-x^*\|^2\\
&+\frac{1}{2}\left\|\frac{1}{\delta}(y(t)-y^*)+\dot{y}(t)\right \|^2+\frac{\delta\gamma -1}{2\delta^2}\|y(t)-y^*\|^2+\frac{1}{2\delta}\left\|\lambda(t)-\lambda ^*\right\|^2.
\end{split}
\end{eqnarray}
Set
\begin{eqnarray*}
\left\{ \begin{array}{ll}
E _0(t):=\beta(t)(\mathcal{L}(x(t),y(t),\lambda ^*)-\mathcal{L}( x^*,y^*,\lambda ^*)),\\
E_1(t):=\frac{1}{2}\left \| \frac{1}{\delta }(x(t)-x^*)+\dot{x}(t)\right \|^2+\frac{\delta \gamma -1}{2\delta^2}\|x(t)-x^*\|^2+\frac{1}{2\delta }\| \lambda (t)-\lambda ^*\|^2+\frac{\beta(t)\epsilon (t)}{2}\| x(t)\|^2,\\
E_2(t):=\frac{1}{2}\left \| \frac{1}{\delta }( y(t)-y^*)+\dot{y}(t)\right \|^2+\frac{\delta \gamma -1}{2\delta^2}\| y(t )-y^*\|^2+\frac{1}{2\delta }\| \lambda (t)-\lambda ^*\|^2+\frac{\beta(t)\epsilon ( t)}{2}\| y( t)\|^2,\\
E_3( t):=-\frac{1}{2\delta}\| \lambda (t)-\lambda ^*\|^2.
\end{array}
 \right.
\end{eqnarray*}
Then,
\begin{eqnarray*}
E(t)=E_0(t)+E_1(t)+E_2(t)+E_3(t).
\end{eqnarray*}
Obviously,
\begin{eqnarray*}
\begin{split}
\dot{E}_1(t)=&~\left \langle \frac{1}{\delta }(x(t)-x^*)+\dot{x}(t),\frac{1}{\delta }\dot{x}(t)+\ddot{x}(t)\right \rangle+\frac{\delta \gamma -1}{\delta ^2}\langle x(t)-x^*,\dot{x}(t)\rangle\\
&+\frac{1}{\delta }\langle \lambda (t)-\lambda ^*,\dot{\lambda }(t)\rangle+\frac{1}{2}\left(\dot{\beta }( t)\epsilon (t)+\beta (t)\dot{\epsilon }( t)\right)\| x(t)\|^2+\beta(t)\epsilon(t)\langle x(t),\dot{x}(t)\rangle\\
=&~\left \langle \frac{1}{\delta }(x(t)-x^*)+\dot{x}(t),\left(\frac{1}{\delta }-\gamma \right )\dot{x}(t)-\beta(t)\big(\nabla _x\mathcal{L}(x(t),y(t),\lambda (t))+\epsilon (t)x(t)\big)\right\rangle\\
&+\frac{\delta \gamma -1}{\delta ^2}\langle x(t)-x^*,\dot{x}(t)\rangle+\frac{1}{\delta }\langle \lambda (t)-\lambda ^*,\dot{\lambda }(t)\rangle+\frac{1}{2}\left(\dot{\beta }(t)\epsilon (t)+\beta(t)\dot{\epsilon }(t)\right )\| x(t) \|^2\\
&+\beta(t)\epsilon(t)\langle x(t),\dot{x}(t)\rangle\\
=&~\left ( \frac{1}{\delta }-\gamma \right )\| \dot{x}(t)\|^2-\beta(t)\left \langle \frac{1}{\delta }(x(t)-x^*)+\dot{x}(t),\nabla_x\mathcal{L}( x(t),y(t),\lambda(t))\right \rangle\\
&-\beta(t)\left \langle \frac{1}{\delta }(x(t)-x^*)+\dot{x}(t),\epsilon(t)x(t)\right \rangle\\
&+\frac{\beta(t)}{\delta }\langle \lambda (t)-\lambda ^*,A\big(x(t)-x^*+\delta \dot{x}(t)\big)+B\big(y(t)-y^*+\delta \dot{y}(t)\big)\rangle\\
&+\frac{1}{2}\left ( \dot{\beta }(t)\epsilon(t)+\beta(t)\dot{\epsilon }(t)\right) \| x(t)\|^2+\beta (t)\epsilon (t)\langle x(t),\dot{x}(t)\rangle,
\end{split}
\end{eqnarray*}
where the second equality holds due to $(\ref{1.3})$, and  the last  equality follows from $(\ref{1.3})$ and $Ax^*+By^*=b$. Combining this with $\nabla _x\mathcal{L}( x(t),y(t),\lambda(t))=\nabla_x\mathcal{L}(x(t),y(t),\lambda ^*)+A^\top(\lambda(t)-\lambda ^*)$, we have
\begin{eqnarray}\label{3.2}
\begin{split}
\dot{E}_1(t)=&~\frac{1-\delta \gamma }{\delta }\| \dot{x}(t)\|^2-\beta (t)\left \langle \frac{1}{\delta }( x(t)-x^*)+\dot{x}(t),\nabla_x\mathcal{L}( x(t),y(t),\lambda ^*)\right \rangle\\
&-\beta (t)\left \langle \frac{1}{\delta }(x(t)-x^*),\epsilon(t)x(t)\right\rangle+\frac{\beta(t)}{\delta }\left \langle \lambda (t)-\lambda ^*,B\big( y(t)-y^*+\delta \dot{y}(t)\big)\right \rangle\\
&+\frac{1}{2}\left ( \dot{\beta }(t)\epsilon (t)+\beta (t)\dot{\epsilon }(t)\right )\| x(t) \|^2.
\end{split}
\end{eqnarray}
Similarly, it is easy to show that
\begin{eqnarray}\label{3.3}
\begin{split}
\dot{E}_2(t)=&~\frac{1-\delta \gamma }{\delta }\| \dot{y}(t) \|^2-\beta (t)\left \langle \frac{1}{\delta }( y(t)-y^* )+\dot{y}(t),\nabla_y\mathcal{L}( x(t),y(t),\lambda ^*)\right \rangle\\
&-\beta (t)\left \langle \frac{1}{\delta }( y(t)-y^*),\epsilon (t)y(t)\right\rangle+\frac{\beta (t)}{\delta } \langle \lambda (t)-\lambda ^*,A\big( x(t)-x^*+\delta \dot{x}(t)\big)\rangle\\
&+\frac{1}{2}\left ( \dot{\beta }(t)\epsilon(t)+\beta(t)\dot{\epsilon }(t)\right)\| y(t)\|^2.
\end{split}
\end{eqnarray}
Since $f+\frac{\epsilon (t)}{2} \| \cdot\|^2$ is an $\epsilon (t)$-strongly convex function, we get
\begin{eqnarray}\label{33convex}
\begin {split}
&f( x(t))+\frac{\epsilon (t)}{2} \| x(t)\|^2-f( x^*)-\frac{\epsilon(t)}{2}\| x^*\|^2\\
\leq& ~\langle \nabla f( x(t))+\epsilon(t)x(t),x(t)-x^* \rangle-\frac{\epsilon (t)}{2} \| x(t)-x^*\|^2.
\end{split}
\end{eqnarray}
By $(\ref{33convex})$ and $\nabla _x\mathcal{L}( x(t),y(t),\lambda ^*)=\nabla f( x(t))+A^\top\lambda ^*+A^\top\big( Ax(t)+By(t)-b\big)$, we have
\begin{eqnarray}\label{123}
\begin{split}
&\left\langle x(t)-x^*,\nabla _x\mathcal{L}( x(t),y(t),\lambda ^*)\right\rangle\\
=&~\langle x(t)-x^*,\nabla f( x(t))\rangle+\langle \lambda ^*,A( x(t)-x^*)\rangle+ \langle A( x(t)-x^*),Ax(t)+By(t)-b \rangle\\
\geq &~f( x(t))-f( x^*)+\frac{\epsilon (t)}{2}\left( \| x(t)\|^2-\| x^*\|^2+ \| x(t)-x^*\|^2\right )-\epsilon (t)\langle x(t),x(t)-x^*\rangle\\
&+\langle \lambda ^*,A( x(t)-x^* )\rangle+ \langle A( x(t)-x^* ),Ax(t)+By(t)-b\rangle.
\end{split}
\end{eqnarray}
Using a similar argument, we can show that
\begin{eqnarray}\label{1231}
\begin{split}
&\langle y(t)-y^*,\nabla _y\mathcal{L}( x(t),y(t),\lambda ^*)\rangle\\
\geq &~~g( y(t) )-g( y^* )+\frac{\epsilon (t)}{2}( \| y(t) \|^2- \| y^*\|^2+ \| y(t)-y^*\|^2 )-\epsilon(t) \langle y(t),y(t)-y^*\rangle\\
&+ \langle \lambda ^*,B( y(t)-y^* )\rangle+\langle B( y(t)-y^*),Ax(t)+By(t)-b \rangle.
\end{split}
\end{eqnarray}
Combining $(\ref{123})$, $(\ref{1231})$ and $Ax^*+By^*=b$, we have
\begin{eqnarray*}
\begin{split}
&\langle x(t)-x^*,\nabla _x\mathcal{L}( x(t),y(t),\lambda ^*) \rangle+\langle y(t)-y^*,\nabla _y\mathcal{L}( x(t),y(t),\lambda ^*) \rangle\\
\geq&~\mathcal{L}( x(t),y(t),\lambda ^*)-\mathcal{L}( x^*,y^*,\lambda ^*)+\frac{1}{2}\| Ax(t)+By(t)-b \|^2\\
&+\frac{\epsilon (t)}{2}(\| x(t) \|^2- \| x^* \|^2+\| x(t)-x^*\|^2)-\epsilon(t) \langle x(t),x(t)-x^*\rangle\\
&+\frac{\epsilon (t)}{2}(\| y(t) \|^2-\| y^* \|^2+\| y(t)-y^*\|^2)-\epsilon (t)\langle y(t),y(t)-y^* \rangle.
\end{split}
\end{eqnarray*}
This, together with (\ref{3.2}) and (\ref{3.3}), implies
\begin{eqnarray}\label{3.4}
\begin{split}
&\dot{E}_1(t)+\dot{E}_2(t)\\
\leq&~\frac{1-\delta \gamma }{\delta }\left ( \| \dot{x}(t)\|^2+ \| \dot{y}(t) \|^2\right )-\frac{\beta(t)}{\delta } ( \mathcal{L}( x(t),y(t),\lambda ^*)-\mathcal{L}( x^*,y^*,\lambda ^*))\\
&-\frac{\beta (t)}{2\delta }\| Ax(t))+By(t)-b\|^2\\
&-\frac{\beta (t)\epsilon (t)}{2\delta }\left( \| x(t)\|^2-\| x^*\|^2+\| x(t)-x^* \|^2+\| y(t)\|^2- \| y^*\|^2+\| y(t)-y^* \|^2 \right)\\
&-\beta(t)\left\langle \dot{x}(t),\nabla _x\mathcal{L}( x(t),y(t),\lambda ^* )\right\rangle-\beta(t) \langle \dot{y}(t),\nabla _y\mathcal{L}( x(t),y(t),\lambda ^* ) \rangle\\
&+\frac{\beta (t)}{\delta }\langle \lambda(t)-\lambda ^*,A\big( x(t)+\delta \dot{x}(t)\big )+B \big( y(t)+\delta \dot{y}(t)\big)-b\rangle\\
&+\frac{1}{2}\left( \dot{\beta }(t)\epsilon (t)+\beta(t)\dot{\epsilon }(t)\right)\left (  \| x(t)\|^2+\| y(t) \|^2\right ).
\end{split}
\end{eqnarray}
On the other hand,
\begin{eqnarray}\label{300}
\begin{split}
\dot{E}_0(t)=&~\dot{\beta } (t) ( \mathcal{L}( x(t),y(t),\lambda ^*)-\mathcal{L}( x^*,y^*,\lambda ^* ))\\
&+\beta (t)\langle \nabla _x\mathcal{L}( x(t),y(t),\lambda ^*),\dot{x}(t)\rangle+\beta (t)\langle \nabla _y\mathcal{L}( x(t),y(t),\lambda ^*),\dot{y} (t)\rangle
\end{split}
\end{eqnarray}
and
\begin{eqnarray}\label{3.5}
\begin{split}
\dot{E}_3(t)=&~-\frac{1}{\delta}\left\langle\lambda(t)-\lambda^*,\dot{\lambda}(t)\right\rangle\\
=&~-\frac{\beta(t)}{\delta } \langle \lambda(t)-\lambda ^*,A\big( x(t)+\delta \dot{x}(t)\big)+B\big( y(t)+\delta \dot{y}(t)\big)-b \rangle.
\end{split}
\end{eqnarray}
By (\ref{3.4}), (\ref{300}) and (\ref{3.5}), we have
\begin{small}
\begin{eqnarray}\label{et}
\begin{split}
\dot{E}(t)\leq &~\frac{1-\delta \gamma }{\delta }\left ( \| \dot{x}(t) \|^2+ \| \dot{y}(t)\|^2\right )+\left ( \dot{\beta }(t)-\frac{\beta(t)}{\delta }\right )( \mathcal{L}( x(t),y(t),\lambda ^*)-\mathcal{L}(x^*,y^*,\lambda^*))\\
&-\frac{\beta (t)}{2\delta } \| Ax(t)+By(t)-b \|^2+\frac{1}{2}\left ( \dot{\beta }(t)\epsilon(t)+\beta(t)\dot{\epsilon }(t)\right)( \| x(t)\|^2+ \|y(t)\|^2)\\
&-\frac{\beta(t)\epsilon(t)}{2\delta }\left ( \| x(t)\|^2-\| x^* \|^2+\| x(t)-x^* \|^2+\|y(t)\|^2- \| y^*\|^2+\| y(t)-y^*\|^2\right )\\
\leq&~ \frac{1-\delta \gamma }{\delta }\left ( \| \dot{x}(t)\|^2+ \| \dot{y}(t)\|^2\right )+\left ( \dot{\beta }(t)-\frac{\beta (t)}{\delta }\right )( \mathcal{L} ( x(t),y(t),\lambda ^* )-\mathcal{L}( x^*,y^*,\lambda ^*))\\
&-\frac{\beta (t)}{2\delta }\| Ax(t)+By(t)-b\|^2-\frac{\beta (t)\epsilon (t)}{2\delta }\left ( -\| x^*\|^2+\| x(t)-x^*\|^2-\| y^* \|^2+\| y(t)-y^* \|^2\right ),
\end{split}
\end{eqnarray}
\end{small}
where the last inequality is derived from (\ref{assump}).
It follows from (\ref{et}) that
\begin{eqnarray*}
\begin{split}
&\dot{E}(t)-\frac{1-\delta \gamma }{\delta } ( \| \dot{x}(t) \|^2+ \|\dot{y}(t) \|^2)-\left ( \dot{\beta }(t)-\frac{\beta (t)}{\delta }\right )( \mathcal{L}( x(t),y(t),\lambda ^* )-\mathcal{L}( x^*,y^*,\lambda ^*))\\
&+\frac{\beta(t)\epsilon (t)}{2\delta } ( \| x(t)-x^*\|^2+\| y(t)-y^* \|^2 )+\frac{\beta (t)}{2\delta }\| Ax(t)+By(t)-b\|^2\\
\leq&~\frac{\beta (t)\epsilon(t)}{2\delta } ( \| x^* \|^2+ \|y^* \|^2 ).
\end{split}
\end{eqnarray*}
Integrating the last inequality from $t_0$ to $t$, we obtain
\begin{eqnarray*}
\begin{split}
&E(t)+\int_{t_0}^{t}\frac{\delta \gamma-1 }{\delta }\left ( \| \dot{x}(s) \|^2+\| \dot{y}(s)\|^2\right )ds\\
&+\int_{t_0}^{t}\left ( \frac{\beta (s)}{\delta }-\dot{\beta }(s)\right )( \mathcal{L}( x(s),y(s),\lambda ^*)-\mathcal{L}( x^*,y^*,\lambda ^*))ds\\
&+\int_{t_0}^{t}\frac{\beta (s)\epsilon (s)}{2\delta }(\| x(s)-x^*\|^2+\| y(s)-y^*\|^2 )ds+\int_{t_0}^{t}\frac{\beta(s)}{2\delta }\| Ax(s)+By(s)-b \|^2 ds\\
\leq &~E( t_0)+\frac{\left (  \| x^* \|^2+ \|y^*\|^2\right ) }{2\delta }\int_{t_0}^{t}\beta (s)\epsilon (s)ds.
\end{split}
\end{eqnarray*}
Combining this with (\ref{assump}) and noting that $\mathcal{L} ( x(t),y(t),\lambda ^* )-\mathcal{L}( x^*,y^*,\lambda ^*)\geq 0$, it follows that $ \left\{ E(t) \right\}_{t\geq t_0}$ is bounded, and that
\begin{eqnarray*}
\int_{t_0}^{+\infty}\frac{\delta \gamma-1 }{\delta }\left(\| \dot{x}(t)\|^2+\| \dot{y}(t)\|^2\right)dt< +\infty,
\end{eqnarray*}
\begin{eqnarray*}
\int_{t_0}^{+\infty}\left ( \frac{\beta (t)}{\delta }-\dot{\beta }(t)\right )( \mathcal{L}( x(t),y(t),\lambda ^* )-\mathcal{L}(x^*,y^*,\lambda^* ) )dt< +\infty,
\end{eqnarray*}
\begin{eqnarray}\label{inte1}
\int_{t_0}^{+\infty}\frac{\beta (t)\epsilon(t)}{2\delta }\left( \| x(t)-x^*\|^2+\| y(t)-y^* \|^2\right)dt< +\infty,
\end{eqnarray}
and
\begin{eqnarray*}
\int_{t_0}^{+\infty}\frac{\beta(t)}{2\delta }\| Ax(t)+By(t)-b \|^2 dt<+\infty.
\end{eqnarray*}
Hence, the integral estimate results of $\mbox{(ii)}$ are satisfied.

Now, we derive  the convergence rate results of $\mbox{(i)}$.  In fact, by virtue of (\ref{3.7}) and the boundedness of $\left\{ E(t)\right\}_{t\geq t_0}$, it is clear that the trajectory $\left\{( x(t),y(t),\lambda(t))\right\}_{t\geq t_0}$ is bounded and
\begin{eqnarray}\label{3.8}
\mathcal{L} ( x(t), y(t),\lambda ^* )-\mathcal{L} ( x^*,y^*,\lambda ^* )=\mathcal{O}\left ( \frac{1}{\beta (t)}\right ), ~\textup{as}~ t\rightarrow+\infty.
\end{eqnarray}
Since $f$ and $g$ are convex, and $\nabla f$ is $l_1$-Lispschitz continuous and $\nabla g$ is $l_2$-Lispschitz continuous, it follows from \cite[Theorem 2.1.5]{n2004} that
\begin{eqnarray*}
f( x(t))-f( x^*)-\langle \nabla f ( x^*),x(t)-x^* \rangle\geq \frac{1}{2l_1} \| \nabla f( x(t) )-\nabla f( x^*)\|^2
\end{eqnarray*}
and
\begin{eqnarray*}
g( y(t))-g( y^* )- \langle \nabla g ( y^* ),y(t)-y^*\rangle\geq \frac{1}{2l_2}\| \nabla g( y(t) )-\nabla g( y^* )\|^2.
\end{eqnarray*}
Thus,
\begin{eqnarray*}
\begin{split}
&\mathcal{L}( x(t), y(t),\lambda ^*)-\mathcal{L}( x^*,y^*,\lambda ^*)\\
\geq&~ f ( x(t) )-f ( x^*)+ \langle A^\top\lambda ^*,x(t)-x^*\rangle+g( y(t))-g( y^*)+\langle B^\top\lambda ^*,y(t)-y^*\rangle\\
\geq &~\langle \nabla f ( x^* ),x(t)-x^* \rangle+\frac{1}{2l_1} \| \nabla f ( x(t) )-\nabla f ( x^*) \|^2+\langle A^\top\lambda ^*,x(t)-x^*\rangle\\
&+ \langle \nabla g( y^*),y(t)-y^*\rangle+\frac{1}{2l_2}\| \nabla g ( y(t) )-\nabla g ( y^* ) \|^2+ \langle B^\top\lambda ^*,y(t)-y^* \rangle\\
=&~\frac{1}{2l_1}\| \nabla f( x(t))-\nabla f( x^*)\|^2+\frac{1}{2l_2}\| \nabla g ( y(t) )-\nabla g( y^*) \|^2,
\end{split}
\end{eqnarray*}
where the last equality follows from (\ref{1.2}). Thus, it follows from (\ref{3.8}) that
\begin{eqnarray*}
 \| \nabla f ( x(t))-\nabla f( x^* )\|=\mathcal{O}\left ( \frac{1}{\sqrt{\beta (t)}}\right ), ~\textup{as}~ t\rightarrow+\infty,
\end{eqnarray*}
and
\begin{eqnarray*}
 \| \nabla g ( y(t) )-\nabla g ( y^* ) \|=\mathcal{O}\left ( \frac{1}{\sqrt{\beta (t)}}\right ), ~\textup{as}~ t\rightarrow+\infty.
\end{eqnarray*}
On the other hand,
\begin{eqnarray*}
\begin{split}
&\mathcal{L} ( x(t), y(t),\lambda ^*)-\mathcal{L} ( x^*,y^*,\lambda ^* )\\
=&~f( x(t))-f( x^*)+ \langle A^\top\lambda ^*,x(t)-x^* \rangle\\
&+g ( y(t) )-g( y^*)+ \langle B^\top\lambda ^*,y(t)-y^*\rangle+\frac{1}{2} \| Ax(t)+By(t)-b \|^2\\
\geq&~ \langle \nabla f( x^*),x(t)-x^* \rangle+ \langle A^\top\lambda ^*,x(t)-x^* \rangle\\
&+\langle \nabla g( y^* ),y ( t)-y^*\rangle+\langle B^\top\lambda ^*,y(t)-y^* \rangle+\frac{1}{2} \| Ax(t)+By(t))-b\|^2\\
=&~\frac{1}{2}\| Ax(t)+By(t)-b\|^2,
\end{split}
\end{eqnarray*}
where the second inequality follows from the facts that $f$ and $g$ are convex functions, while the last equality is derived from $-A^\top\lambda^*=\nabla f( x^*)$ and $-B^\top\lambda^*=\nabla g ( y^*)$.
Thus,
\begin{eqnarray}\label{3.9}
\| Ax(t)+By(t)-b \|=\mathcal{O}\left ( \frac{1}{\sqrt{\beta (t)}}\right ), ~\textup{as}~ t\rightarrow+\infty.
\end{eqnarray}
Note that
\begin{eqnarray*}
\begin{split}
&\mathcal{L}  ( x(t), y(t),\lambda ^*)-\mathcal{L} ( x^*,y^*,\lambda ^* )\\
=& ~f( x(t))+g ( y(t) )-f ( x^*)-g( y^* )+\langle\lambda^*,Ax(t)+By(t)-b\rangle+\frac{1}{2}\| Ax(t)+By(t)-b \|^2.
\end{split}
\end{eqnarray*}
So, it is easy to show that
\begin{eqnarray*}
|\varPhi (x(t),y(t))-\varPhi (x^*,y^*)|\leq\mathcal{L}(x(t),y(t),\lambda^*)-\mathcal{L}(x^*,y^*,\lambda^*)+\|\lambda^*\|\|Ax(t)+By(t)-b\|.
\end{eqnarray*}
It follows from (\ref{3.8}) and (\ref{3.9}) that
\begin{eqnarray*}
| \varPhi  ( x(t),y(t))-\varPhi ( x^*,y^*) |=\mathcal{O}\left ( \frac{1}{\sqrt{\beta (t)}}\right ),~\textup{as}~ t\rightarrow+\infty.
\end{eqnarray*}
The proof is complete.\qed
\end{proof}

\begin{remark}
In the special case that $g(y)\equiv 0$ and $B\equiv 0$, the dynamical system (\ref{1.3}) reduces to the dynamical system considered in \cite{HHF2022}. In \cite[Thereom 2.1]{HHF2022}, they show that the convergence rates of the primal-dual gap and the feasibility measure are $\mathcal{O}\left ( \frac{1}{\beta (t)}\right )$ and $\mathcal{O}\left ( \frac{1}{\sqrt{\beta (t)}}\right )$, respectively. Clearly, Theorem \ref{the4.1} extends \cite[Theorem 2.1]{HHF2022} from the linear equality constrained optimization problem (\ref{in1.3}) to the separable convex optimization problem (\ref{1.1}).
\end{remark}

Specially, in the case $\beta(t)=t^{r_1}$ and $\varepsilon(t)=\frac{c}{t^{r_2}}$ with $r_2>r_1>1$, we can improve the convergence rate of primal-dual gap,  objective function error and the feasibility measure.

\begin{theorem}
Let $\beta(t)=t^{r_1}$, $\epsilon (t)=\frac{c}{t^{r_2}}$ with $r_2>r_1>1$ and $c>0$. Suppose that $\frac{1}{\delta}\leq\gamma$ and $r_1\leq\frac{t}{\delta}$. Let $( x,y,\lambda ): [ t_0,+\infty)\rightarrow\mathcal{X}\times\mathcal{Y}\times\mathcal{Z}$ be a solution of the dynamical system \textup{(\ref{1.3})}. Then, it holds that
 \begin{equation*}
 \begin{split}
&\mathcal{L} ( x(t), y(t),\lambda ^* )-\mathcal{L} ( x^*,y^*,\lambda ^* )=o\left ( \frac{1}{t^{r_1-1}}\right ), ~\textup{as}~ t\rightarrow+\infty,\\
&| \varPhi  ( x(t),y(t))-\varPhi ( x^*,y^*) |=o\left ( \frac{1}{\sqrt{t^{r_1-1}}}\right ),~\textup{as}~ t\rightarrow+\infty,\\
&\| Ax(t)+By(t)-b \|=o\left ( \frac{1}{\sqrt{t^{r_1-1}}}\right ), ~\textup{as}~ t\rightarrow+\infty.
\end{split}
\end{equation*}
\end{theorem}
\begin{proof}
Clearly,  we can easily deduce from \eqref{et} that $$\dot{E}(t)\leq\frac{ct^{r_1-r_2}}{2\delta } ( \| x^* \|^2+ \|y^* \|^2 ).$$ Integrating it from $t_0$ to $t$, we have
\begin{equation*}
E(t)\leq E( t_0)+\frac{c\left (  \| x^* \|^2+ \|y^*\|^2\right ) }{2\delta }\int_{t_0}^{t}s^{r_1-r_2}ds,
\end{equation*}
which can be rewritten as
\begin{equation*}
\frac{E(t)}{t}\leq \frac{E( t_0)}{t}+\frac{c\left (  \| x^* \|^2+ \|y^*\|^2\right ) }{2\delta t}\int_{t_0}^{t}  s^{r_1-r_2}ds.
\end{equation*}
Applying Lemma \ref{lem2.1} with $\psi(t)=t$ and $\phi=t^{r_1-r_2-1}$, we obtain $\lim_{t\to+\infty}\frac{E(t)}{t}=0$. Taking $(\ref{3.7})$  into account, it is easy to deduce that
\begin{equation*}
 \mathcal{L} ( x(t), y(t),\lambda ^* )-\mathcal{L} ( x^*,y^*,\lambda ^* )=o\left ( \frac{1}{t^{r_1-1}}\right ), ~\textup{as}~ t\rightarrow+\infty.
 \end{equation*}
This, together with the definition of $\mathcal{L}$, follows that
\begin{eqnarray*}
| \varPhi  ( x(t),y(t))-\varPhi ( x^*,y^*) |=o\left (\frac{1}{t^{r_1-1}}\right ),~\textup{as}~ t\rightarrow+\infty,
\end{eqnarray*}
and
\begin{eqnarray*}
\| Ax(t)+By(t)-b \|=o\left ( \frac{1}{\sqrt{{t^{r_1-1}}}}\right ), ~\textup{as}~ t\rightarrow+\infty.
\end{eqnarray*}
The proof is complete.\qed
\end{proof}

In the sequel, we show that our approach can naturally be extended to the investigation of the separable structured  ``smoooth+nonsmooth'' convex optimization problems. To do this,  we consider the separable structured  convex optimization problem \eqref{1.1} where $f$ is a continuously differentiable convex function  and  $\nabla f$ is Lipschitz continuous, and $g$ is a proper lower semi-continuous convex function. In order to adapt the system \textup{(\ref{1.3})} to this ``smoooth+nonsmooth'' situation, we propose the following differential inclusion system:
\begin{eqnarray}\label{nonsmooth}
\left\{ \begin{array}{ll}
\ddot{x}(t)+\gamma \dot{x}(t)+\beta(t)\left( \nabla f(x(t))+A^\top \lambda(t)+A^\top( Ax(t)+B y(t)-b)+\epsilon(t)x(t)\right)=0,\\
\ddot{y}(t)+\gamma \dot{y}(t)+\beta(t)\left( \partial g(y(t))+B^\top\lambda(t)+B^\top( Ax(t)+B y(t)-b)+\epsilon(t)y(t)\right)\ni0,\\
\dot{\lambda }(t)-\beta(t)\left( A( x(t)+\delta \dot{x}(t))+B(y(t)+\delta\dot{y}(t))-b\right)=0.
\end{array}
 \right.
\end{eqnarray}
Here, $ \partial g$ denotes the subdifferential of $g$.

In the follows, we present the convergence rate results of the differential inclusion system \eqref{nonsmooth}.
\begin{theorem}
Suppose that the assumptions of Theorem $\ref{the4.1}$ are satisfied. Let $( x,y,\lambda ):[ t_0,+\infty)\rightarrow\mathcal{X}\times\mathcal{Y}\times\mathcal{Z}$ is a solution of the  inclusion  system \textup{(\ref{nonsmooth})}. Then, for any $( x^*,y^*,\lambda ^*)\in \Omega$, we have
\begin{equation*}
\begin{split}
&\mathcal{L} ( x(t), y(t),\lambda ^* )-\mathcal{L} ( x^*,y^*,\lambda ^* )=\mathcal{O}\left ( \frac{1}{\beta(t)}\right ), ~\textup{as}~ t\rightarrow+\infty,\\
&|\varPhi  ( x(t),y(t))-\varPhi ( x^*,y^*) |=\mathcal{O}\left ( \frac{1}{\sqrt{\beta(t)}}\right ),~\textup{as}~ t\rightarrow+\infty,\\
&\| Ax(t)+By(t)-b \|=\mathcal{O}\left ( \frac{1}{\sqrt{\beta(t)}}\right ), ~\textup{as}~ t\rightarrow+\infty,\\
&\| \nabla f( x( t))-\nabla f( x^*)\|=\mathcal{O}\left ( \frac{1}{\sqrt{\beta (t)}}\right ), ~\textup{as} ~t\rightarrow+\infty.
\end{split}
\end{equation*}
\end{theorem}
\begin{proof} Let
 $\Phi_\tau(x,y)=f(x)+g_{\tau}(y)$, where $g_\tau$ is the Moreau envelope of $g$ of index $\tau >0$, which is defined as
\begin{equation*}
g_\tau(y)=\mathop{\textup{inf}}\limits_{\mu\in \mathcal{Y}}\left\{g(\mu)+\frac{1}{2\tau}\|y-\mu\|\right\}, \forall y\in \mathcal{Y}.
\end{equation*}
Clearly,  $g_{\tau}(y)$ is continuously differentiable and  $\nabla g_{\tau}$ is Lipschitz continuous.
Therefore, we can study the convergence of the  differential inclusion system \eqref{nonsmooth} by examining the following dynamical system:
\begin{eqnarray}\label{dy}
\small{\left\{ \begin{array}{ll}
\ddot{x}_\tau(t)+\gamma \dot{x}_\tau(t)+\beta(t)\left( \nabla f(x_\tau(t))+A^\top \lambda_\tau(t)+A^\top( Ax_\tau(t)+B y_\tau(t)-b)+\epsilon(t)x_\tau(t)\right)=0,\\
\ddot{y}_\tau(t)+\gamma \dot{y}_\tau(t)+\beta(t)\left( \nabla g_\tau(y_\tau(t))+B^\top\lambda_\tau(t)+B^\top( Ax_\tau(t)+B y_\tau(t)-b)+\epsilon(t)y_\tau(t)\right)=0,\\
\dot{\lambda}_\tau(t)-\beta(t)\left( A( x_\tau(t)+\delta \dot{x}_\tau(t))+B(y_\tau(t)+\delta\dot{y}_\tau(t))-b\right)=0.
\end{array}
 \right.}
\end{eqnarray}

Obviously, the system \eqref{dy} is consistent to the dynamical system \eqref{1.3}. Using a similar argument as in Theorem \ref{the4.1}, we can also show that
\begin{equation}\label{cr}
\begin{split}
&\mathcal{L} ( x_\tau(t), y_\tau(t),\lambda ^* )-\mathcal{L} ( x^*,y^*,\lambda ^* )=\mathcal{O}\left ( \frac{1}{\beta(t)}\right ), ~\textup{as}~ t\rightarrow+\infty,\\
&|\varPhi  ( x_\tau(t),y_\tau(t))-\varPhi ( x^*,y^*) |=\mathcal{O}\left ( \frac{1}{\sqrt{\beta(t)}}\right ),~\textup{as}~ t\rightarrow+\infty,\\
&\| Ax_\tau(t)+By_\tau(t)-b \|=\mathcal{O}\left ( \frac{1}{\sqrt{\beta(t)}}\right ), ~\textup{as}~ t\rightarrow+\infty,\\
&\| \nabla f( x_\tau( t))-\nabla f( x^*)\|=\mathcal{O}\left ( \frac{1}{\sqrt{\beta (t)}}\right ), ~\textup{as} ~t\rightarrow+\infty.\\
\end{split}
\end{equation}
Now, by virtue of the Moreau-Yosida regularization in \cite[Section 2]{LS1997}, there  exists a subsequence $\{(x_\tau(t),y_\tau(t),\lambda_\tau(t))\}_{\tau>0}$ of trajectories, generated by the dynamical system \eqref{dy} , that converges to the trajectories generated by the differential inclusion system \eqref{nonsmooth}. Then, the desired results are obtained by
taking the limit in \eqref{cr} with $\tau\rightarrow0$.
The proof is complete.\qed
\end{proof}

Next, we derive the following proposition, which will be used in the proof of the minimization property and strong convergence of the trajectory.

\begin{proposition}\label{lemma4.1}
Let $( x,y,\lambda ): [ t_0,+\infty)\rightarrow\mathcal{X}\times\mathcal{Y}\times\mathcal{Z}$ be a solution of the dynamical system \textup{(\ref{1.3})} and $ ( x^*,y^*,\lambda ^*)\in\Omega$. Denote
\begin{eqnarray}\label{4.1}
\begin{split}
\tilde{E}(t)=&~ \mathcal{L}( x(t),y(t),\lambda ^*)-\mathcal{L}( x^*,y^*,\lambda ^*)+\frac{\epsilon (t)}{2} \| x(t) \|^2+\frac{\epsilon (t)}{2}\| y(t)\|^2\\
&+\frac{1}{2\beta(t)} \left\| \frac{1}{\delta }( x(t)-x^* )+\dot{x}(t)\right \|^2+\frac{\delta \gamma -1}{2\delta^2\beta (t)}\| x(t)-x^* \|^2\\
&+\frac{1}{2\beta (t)}\left \| \frac{1}{\delta }( y(t)-y^*)+\dot{y}(t) \right\|^2+\frac{\delta \gamma -1}{2\delta ^2\beta (t)} \| y(t)-y^*\|^2+\frac{1}{2\delta\beta (t)} \| \lambda (t)-\lambda ^*\|^2.
\end{split}
\end{eqnarray}
Then,
\begin{eqnarray*}
\begin{split}
\frac{\dot{\beta }(t)}{\beta (t)}\tilde{E}(t)+\dot{\tilde{E}}(t)\leq&  \left( \frac{\dot{\beta }(t)}{\beta(t)}-\frac{1}{\delta }\right )(\mathcal{L}( x(t),y(t),\lambda ^* )-\mathcal{L} ( x^*,y^*,\lambda ^* ))\\
&+\frac{1}{2}\left [\left( \frac{\dot{\beta }(t)}{\beta(t)}-\frac{1}{\delta }\right)\epsilon (t)+\dot{\epsilon }(t)\right ]( \| x(t) \|^2+ \| y(t)\|^2 )\\
&-\frac{\epsilon (t)}{2\delta }( \| x(t)-x^*\|^2+ \| y(t)-y^* \|^2 )\\
&+\frac{\epsilon (t)}{2\delta }( \| x^* \|^2+ \| y^* \|^2 )+\frac{1-\delta \gamma }{\delta\beta (t)}( \| \dot{x}(t)\|^2+ \| \dot{y}(t) \|^2)\\
&-\frac{1}{2\delta } \| Ax(t)+By(t)-b\|^2.
\end{split}
\end{eqnarray*}
\end{proposition}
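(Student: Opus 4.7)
The key observation is that $\tilde{E}(t)$ is just the energy function $E(t)$ from the proof of Theorem~\ref{the4.1} renormalized by $\beta(t)$: comparing (\ref{4.1}) with (\ref{3.7}) term by term gives
\begin{equation*}
\tilde{E}(t)=\frac{E(t)}{\beta(t)},
\end{equation*}
so the quotient rule yields the identity
\begin{equation*}
\frac{\dot{\beta}(t)}{\beta(t)}\tilde{E}(t)+\dot{\tilde{E}}(t)=\frac{\dot{E}(t)}{\beta(t)}.
\end{equation*}
Thus the plan is simply to divide an appropriate upper bound for $\dot{E}(t)$ by $\beta(t)$ and regroup.

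The appropriate bound is the \emph{first} of the two inequalities in (\ref{et}), i.e.\ the one before assumption (\ref{assump}) is invoked to drop the term $\tfrac{1}{2}(\dot{\beta}\epsilon+\beta\dot{\epsilon})(\|x\|^2+\|y\|^2)$. This matters, because the statement of Proposition~\ref{lemma4.1} carries the coefficient $\left(\tfrac{\dot{\beta}}{\beta}-\tfrac{1}{\delta}\right)$ without any sign restriction and keeps $\dot{\epsilon}(t)$ explicitly, so no terms of uncertain sign can be thrown away. Dividing the first line of (\ref{et}) by $\beta(t)$ turns $\dot{\beta}-\beta/\delta$ into $\dot{\beta}/\beta-1/\delta$, turns the kinetic coefficient into $(1-\delta\gamma)/(\delta\beta)$, leaves the feasibility term $-\tfrac{1}{2\delta}\|Ax+By-b\|^2$ untouched, and merges the pure-regularization contributions via
\begin{equation*}
\frac{\dot{\beta}\epsilon+\beta\dot{\epsilon}}{2\beta}-\frac{\epsilon}{2\delta}=\frac{1}{2}\left[\left(\frac{\dot{\beta}}{\beta}-\frac{1}{\delta}\right)\epsilon+\dot{\epsilon}\right],
\end{equation*}
the coefficient of $\|x(t)\|^2$ (and symmetrically of $\|y(t)\|^2$), while the expansion of $-\tfrac{\beta\epsilon}{2\delta}(\|x\|^2-\|x^*\|^2+\|x-x^*\|^2)$ divided by $\beta$ gives the $-\tfrac{\epsilon}{2\delta}\|x-x^*\|^2$ and $+\tfrac{\epsilon}{2\delta}\|x^*\|^2$ pieces appearing on the right-hand side of the target inequality.

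The only real subtlety is to confirm that the first line of (\ref{et}) genuinely holds without appealing to any part of assumption (\ref{assump}). Inspecting the chain (\ref{3.2})--(\ref{3.5}) and (\ref{300}) together with the convexity-based lower bounds (\ref{123})--(\ref{1231}) shows that each identity uses only the dynamical system (\ref{1.3}), the feasibility relation $Ax^*+By^*=b$ from $(x^*,y^*,\lambda^*)\in\Omega$, and the $\epsilon(t)$-strong convexity of $f+\tfrac{\epsilon(t)}{2}\|\cdot\|^2$ and $g+\tfrac{\epsilon(t)}{2}\|\cdot\|^2$; the inequalities $\dot{\beta}\le\beta/\delta$, $\dot{\epsilon}\le 0$, $\int\beta\epsilon<+\infty$, and $\delta\gamma>1$ enter only in passing from the first to the second line of (\ref{et}). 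Consequently the first line is valid under the standing hypotheses of Proposition~\ref{lemma4.1}, and the conclusion follows by the identification above. The expected obstacle is purely bookkeeping — carefully matching coefficients of the five quadratic quantities $\|x\|^2$, $\|y\|^2$, $\|x-x^*\|^2$, $\|y-y^*\|^2$, $\|\lambda-\lambda^*\|^2$ — rather than any new analytic step.
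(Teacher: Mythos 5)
Your proposal is correct and is essentially identical to the paper's own proof: the paper likewise writes $\tilde{E}(t)=E(t)/\beta(t)$, observes $\tfrac{\dot{\beta}(t)}{\beta(t)}\tilde{E}(t)+\dot{\tilde{E}}(t)=\tfrac{1}{\beta(t)}\dot{E}(t)$, and then divides the \emph{first} inequality of (\ref{et}) by $\beta(t)$. You also correctly flag the one genuine subtlety — that the first line of (\ref{et}) is obtained before assumption (\ref{assump}) is used, so the bound is valid under the proposition's weaker hypotheses — and your coefficient bookkeeping matches the stated conclusion.
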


\begin{proof}
Clearly, $\tilde{E}(t)=\frac{1}{\beta (t)}E(t)$ and
$
\dot{\tilde{E}}(t)=\frac{1}{\beta (t)}\dot{E}(t)-\frac{\dot{\beta }(t)}{\beta (t)^2}E(t).
$
Then,
$
\frac{\dot{\beta }(t)}{\beta (t)}\tilde{E}(t)+\dot{\tilde{E}}(t)=\frac{1}{\beta (t)}\dot{E}(t).
$
From the first inequality of (\ref{et}), we have
\begin{eqnarray*}
\begin{split}
\frac{\dot{\beta }(t)}{\beta (t)}\tilde{E}(t)+\dot{\tilde{E}}(t)\leq&  \left( \frac{\dot{\beta }(t)}{\beta(t)}-\frac{1}{\delta }\right )(\mathcal{L}( x(t),y(t),\lambda ^* )-\mathcal{L} ( x^*,y^*,\lambda ^* ))\\
&+\frac{1}{2}\left [\left( \frac{\dot{\beta }(t)}{\beta(t)}-\frac{1}{\delta }\right)\epsilon (t)+\dot{\epsilon }(t)\right ]( \| x(t) \|^2+ \| y(t)\|^2 )\\
&-\frac{\epsilon (t)}{2\delta }( \| x(t)-x^*\|^2+ \| y(t)-y^* \|^2 )\\
&+\frac{\epsilon (t)}{2\delta }( \| x^* \|^2+ \| y^* \|^2 )+\frac{1-\delta \gamma }{\delta\beta (t)}( \| \dot{x}(t)\|^2+ \| \dot{y}(t) \|^2)\\
&-\frac{1}{2\delta } \| Ax(t)+By(t)-b\|^2.
\end{split}
\end{eqnarray*}
The proof is complete.\qed
\end{proof}

Now, we establish the minimal properties of the trajectories generated by the dynamical system $(\ref{1.3})$.

\begin{theorem}\label{lam4.2}
Suppose that $\epsilon : [ t_0,+\infty )\rightarrow [0,+\infty  )$ and $\beta : [ t_0,+\infty )\rightarrow [0,+\infty )$ with
\begin{equation*}
\int_{t_0}^{+\infty}\epsilon (t)dt< +\infty,~\lim_{t\to+\infty}\beta (t)=+\infty,~\dot{\beta }(t)\leqslant \frac{1}{\delta }\beta (t),\mbox{ and }\frac{1}{\delta}\leq \gamma.
\end{equation*}
Let $( x,y,\lambda ): [ t_0,+\infty)\rightarrow\mathcal{X}\times\mathcal{Y}\times\mathcal{Z}$ be a solution of the dynamical system \textup{(\ref{1.3})}. Then,
\begin{eqnarray*}
\lim_{t\to+\infty}\mathcal{L}( x(t),y(t),\lambda ^*)=\mathcal{L}( x^*,y^*,\lambda ^* ),~~ \forall ~( x^*,y^*,\lambda ^* )\in\Omega.
\end{eqnarray*}
\end{theorem}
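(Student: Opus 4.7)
The plan is to analyze the normalized energy $\tilde{E}(t)$ from Proposition \ref{lemma4.1} and establish that $\tilde{E}(t)\to 0$ as $t\to+\infty$. Because $\tilde{E}(t)$ is a sum of manifestly non-negative quantities, this single convergence will deliver both claims of the theorem simultaneously: the first from the summand $\mathcal{L}(x(t),y(t),\lambda^*)-\mathcal{L}(x^*,y^*,\lambda^*)$, and the second from the summands $\frac{1}{2\beta(t)}\|\frac{1}{\delta}(x(t)-x^*)+\dot{x}(t)\|^2$ and its $y$-counterpart.

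First I would simplify the inequality in Proposition \ref{lemma4.1}. Under the standing hypotheses $\dot{\beta}\leq\beta/\delta$, $\dot{\epsilon}\leq 0$, $\delta\gamma>1$, together with $\mathcal{L}(x(t),y(t),\lambda^*)\geq\mathcal{L}(x^*,y^*,\lambda^*)$, every term on the right-hand side is non-positive except $\frac{\epsilon(t)}{2\delta}(\|x^*\|^2+\|y^*\|^2)$. Combining with $\tilde{E}(t)\geq 0$ and $\dot{\beta}(t)\geq 0$, this reduces to
\begin{equation*}
\dot{\tilde{E}}(t)+\frac{\dot{\beta}(t)}{\beta(t)}\tilde{E}(t)\leq\frac{\epsilon(t)}{2\delta}(\|x^*\|^2+\|y^*\|^2).
\end{equation*}
Dropping the non-negative term on the left and integrating, the assumption $\int_{t_0}^{+\infty}\epsilon(s)\,ds<+\infty$ yields boundedness of $\tilde{E}$; then integrating the full inequality produces the summability
\begin{equation*}
\int_{t_0}^{+\infty}\frac{\dot{\beta}(s)}{\beta(s)}\tilde{E}(s)\,ds<+\infty.
\end{equation*}

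To upgrade these estimates to a pointwise limit for $\tilde{E}$, I would introduce the auxiliary function $h(t):=\tilde{E}(t)-\frac{\|x^*\|^2+\|y^*\|^2}{2\delta}\int_{t_0}^{t}\epsilon(s)\,ds$. The differential inequality gives $\dot{h}(t)\leq-\frac{\dot{\beta}(t)}{\beta(t)}\tilde{E}(t)\leq 0$, so $h$ is non-increasing, and since $\tilde{E}\geq 0$ together with boundedness of the Tikhonov integral imply $h$ is bounded below, $h(t)$ converges; consequently $\tilde{E}(t)\to\tilde{E}^*$ for some $\tilde{E}^*\geq 0$. Because $\beta(t)\to+\infty$, we have $\int_{t_0}^{+\infty}\frac{\dot{\beta}(s)}{\beta(s)}\,ds=+\infty$; if $\tilde{E}^*>0$ then $\tilde{E}(t)>\tilde{E}^*/2$ on some tail, forcing $\int\frac{\dot{\beta}}{\beta}\tilde{E}=+\infty$ and contradicting the summability just obtained. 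Hence $\tilde{E}^*=0$.

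With $\tilde{E}(t)\to 0$, the non-negativity of each summand in its defining formula immediately produces $\mathcal{L}(x(t),y(t),\lambda^*)\to\mathcal{L}(x^*,y^*,\lambda^*)$ and the claimed little-$o$ decay of $\|\frac{1}{\delta}(x(t)-x^*)+\dot{x}(t)\|$ together with its $y$-analog, finishing both assertions. The main obstacle will be the bootstrapping step: passing from the qualitative estimate $\int\frac{\dot{\beta}}{\beta}\tilde{E}<+\infty$ to the vanishing limit for $\tilde{E}$. This requires first pinning down the existence of a limit via the monotone auxiliary $h$, and only then invoking the divergence of $\int\frac{\dot{\beta}}{\beta}$ to exclude any strictly positive limit value.
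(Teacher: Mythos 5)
Your proposal is correct in its main mechanism but reaches the key limit $\tilde{E}(t)\to 0$ by a genuinely different route from the paper. The paper multiplies the inequality of Proposition \ref{lemma4.1} by $\beta(t)$ to obtain $\frac{d}{dt}\bigl(\beta(t)\tilde{E}(t)\bigr)\leq\frac{\beta(t)\epsilon(t)}{2\delta}\bigl(\|x^*\|^2+\|y^*\|^2\bigr)$, integrates to get the explicit bound $\tilde{E}(t)\leq\frac{\beta(t_0)\tilde{E}(t_0)}{\beta(t)}+\frac{\|x^*\|^2+\|y^*\|^2}{2\delta\beta(t)}\int_{t_0}^{t}\beta(s)\epsilon(s)\,ds$, and then invokes Lemma \ref{lem2.1} (with $\psi=\beta$, $\phi=\epsilon$) to kill the averaged integral. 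You instead integrate the unweighted inequality to get boundedness of $\tilde{E}$ and summability of $\frac{\dot{\beta}}{\beta}\tilde{E}$, prove the existence of $\lim_{t\to\infty}\tilde{E}(t)$ via the monotone auxiliary $h(t)=\tilde{E}(t)-\frac{C}{2\delta}\int_{t_0}^{t}\epsilon$, and exclude a positive limit using $\int_{t_0}^{\infty}\frac{\dot{\beta}}{\beta}=\lim_t\ln\beta(t)-\ln\beta(t_0)=+\infty$. Every step of this checks out under the standing assumptions ($\beta$ nondecreasing, $\epsilon$ nonincreasing, $\delta\gamma>1$); in effect you are reproving the special case of Lemma \ref{lem2.1} that is needed, which makes your argument self-contained and more elementary. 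What the paper's route buys in exchange is a quantitative estimate: the explicit bound on $\tilde{E}(t)$ in terms of $\frac{1}{\beta(t)}\int_{t_0}^{t}\beta\epsilon$ is reused verbatim in the subsequent theorem to derive the rates $\mathcal{O}(t^{1-r_2})$ and $\mathcal{O}(\ln t/t^{r_1})$, whereas your contradiction argument only yields the qualitative limit.

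One caution on the last step, which you share with the paper: from $\tilde{E}(t)\to 0$ and the summand $\frac{1}{2\beta(t)}\bigl\|\frac{1}{\delta}(x(t)-x^*)+\dot{x}(t)\bigr\|^2$ one only gets $\bigl\|\frac{1}{\delta}(x(t)-x^*)+\dot{x}(t)\bigr\|=o\bigl(\sqrt{\beta(t)}\bigr)$, not $o\bigl(1/\sqrt{\beta(t)}\bigr)$; the stronger statement would require $\beta(t)^2\tilde{E}(t)\to 0$. Since the paper draws exactly the same inference at this point, this is an issue inherited from the source rather than a defect of your alternative argument, but it is worth being aware that the displayed little-$o$ rate is not what this Lyapunov function delivers.
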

\begin{proof}
Note that $\mathcal{L}( x(t),y(t),\lambda ^* )-\mathcal{L} ( x^*,y^*,\lambda ^*)\geq 0$, $\dot{\beta }(t)\leq \frac{1}{\delta }\beta (t)$ and $\frac{1}{\delta }\leq \gamma$. From Proposition \ref{lemma4.1}, we have
\begin{eqnarray*}
\frac{\dot{\beta }(t)}{\beta(t)}\tilde{E}(t)+\dot{\tilde{E}}(t)\leq \frac{\epsilon (t)}{2\delta }(  \| x^*\|^2+\| y^* \|^2 ),~~\forall t\geq t_0 .
\end{eqnarray*}
Then,
\begin{eqnarray*}
\frac{d}{dt} ( \beta (t)\tilde{E}(t) )=\dot{\beta }(t)\tilde{E}(t)+\beta (t)\dot{\tilde{E}}(t)\leq \frac{\beta(t)\epsilon (t)}{2\delta }\left ( \| x^* \|^2+ \| y^* \|^2\right ),~~\forall t\geq t_0 .
\end{eqnarray*}
Integrating it from $t_0$ to $t$, we get
\begin{eqnarray*}
\beta (t)\tilde{E}(t)\leq \beta (t_0)\tilde{E}(t_0)+\frac{\| x^*\|^2+\| y^* \|^2}{2\delta }\int_{t_0}^{t}\beta (s)\epsilon(s)ds.
\end{eqnarray*}
It means that
\begin{eqnarray*}
\tilde{E}(t)\leq \frac{\beta (t_0)\tilde{E}(t_0)}{\beta (t)}+\frac{\| x^* \|^2+ \| y^* \|^2}{2\delta\beta(t) }\int_{t_0}^{t}\beta (s)\epsilon (s)ds.
\end{eqnarray*}
Moreover, from Lemma \ref{lem2.1}, we have
$$
\lim_{t\to+\infty}\frac{1}{\beta (t)}\int_{t_0}^{t}\beta(s)\epsilon (s)ds=0.
$$
This, together with $\lim\limits_{t\to+\infty}\beta (t)=+\infty$ and $\tilde{E}(t)\geq 0$, yields
\begin{eqnarray}\label{3.12}
\lim_{t\to+\infty}\tilde{E}(t)=0.
\end{eqnarray}
By (\ref{4.1}) and (\ref{3.12}), we have
\begin{eqnarray*}
\lim_{t\to+\infty}\mathcal{L}( x(t),y(t),\lambda ^* )=\mathcal{L} ( x^*,y^*,\lambda ^* ), ~~ \forall ~( x^*,y^*,\lambda ^* )\in\Omega.
\end{eqnarray*}
The proof is complete.\qed
\end{proof}

In particular, in the case $\beta(t)=t^{r_1}$ and $\epsilon (t)=\frac{c}{t^{r_2}}$ with $1< r_2\leq r_1+1$ and $c>0$, we show that the convergence rate of the primal-dual gap depends on $r_2$.
\begin{theorem}
Let $\beta(t)=t^{r_1}$, $\epsilon (t)=\frac{c}{t^{r_2}}$ with $1<r_2\leq r_1+1$ and $c>0$. Suppose that $\frac{1}{\delta}\leq\gamma$ and $r_1\leq\frac{t}{\delta}$. Let $( x,y,\lambda ): [ t_0,+\infty)\rightarrow\mathcal{X}\times\mathcal{Y}\times\mathcal{Z}$ be a solution of the dynamical system \textup{(\ref{1.3})}. Then, it holds that
\begin{enumerate}
\item[{\rm (i)}] If $1<r_2<r_1+1$, then $\mathcal{L}(x(t),y(t),\lambda^*)-\mathcal{L}(x^*,y^*,\lambda^*)=\mathcal{O}\left(\frac{1}{t^{r_2-1}}\right),~~\textup{as}~t\rightarrow +\infty$.

\item[{\rm (ii)}] If $r_2=r_1+1$, then $\mathcal{L}(x(t),y(t),\lambda^*)-\mathcal{L}(x^*,y^*,\lambda^*)=\mathcal{O}\left(\frac{\ln t}{t^{r_1}}\right),~~\textup{as}~t\rightarrow +\infty$.

\end{enumerate}
\end{theorem}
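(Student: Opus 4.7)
The plan is to leverage the general estimate already derived in the proof of Theorem \ref{lam4.2}, specialized to the power-type choices $\beta(t)=t^{r_1}$ and $\epsilon(t)=c/t^{r_2}$. Concretely, from Proposition \ref{lemma4.1} together with $\mathcal{L}(x(t),y(t),\lambda^*)-\mathcal{L}(x^*,y^*,\lambda^*)\geq 0$, $\dot\beta(t)\leq \beta(t)/\delta$, and $1/\delta<\gamma$, we have
\begin{equation*}
\frac{d}{dt}\bigl(\beta(t)\tilde{E}(t)\bigr)\leq \frac{\beta(t)\epsilon(t)}{2\delta}\bigl(\|x^*\|^2+\|y^*\|^2\bigr),
\end{equation*}
so integrating from $t_0$ to $t$ yields the master bound
\begin{equation*}
\tilde{E}(t)\leq \frac{\beta(t_0)\tilde{E}(t_0)}{\beta(t)}+\frac{\|x^*\|^2+\|y^*\|^2}{2\delta\,\beta(t)}\int_{t_0}^{t}\beta(s)\epsilon(s)\,ds.
\end{equation*}
Since every summand in the definition \eqref{4.1} of $\tilde{E}(t)$ is nonnegative, in particular $\tilde{E}(t)\geq \mathcal{L}(x(t),y(t),\lambda^*)-\mathcal{L}(x^*,y^*,\lambda^*)$, it suffices to control the right-hand side above under the two choices of exponents.

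For the specific choice $\beta(t)=t^{r_1}$, $\epsilon(t)=c/t^{r_2}$ one has $\beta(s)\epsilon(s)=c\,s^{r_1-r_2}$. In case (i), $1<r_2<r_1+1$ gives $r_1-r_2>-1$, hence
\begin{equation*}
\int_{t_0}^{t}\beta(s)\epsilon(s)\,ds=\frac{c}{r_1-r_2+1}\bigl(t^{\,r_1-r_2+1}-t_0^{\,r_1-r_2+1}\bigr),
\end{equation*}
so dividing by $\beta(t)=t^{r_1}$ yields a bound of order $t^{-(r_2-1)}$. In case (ii), $r_2=r_1+1$ forces $r_1-r_2=-1$ and the integral becomes $c\ln(t/t_0)$, which after division by $t^{r_1}$ gives the order $\ln t/t^{r_1}$. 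In both cases, the leading term $\beta(t_0)\tilde{E}(t_0)/\beta(t)=\mathcal{O}(1/t^{r_1})$ from the initial data is absorbed: in (i) because $r_2-1<r_1$ so $t^{-(r_2-1)}$ dominates $t^{-r_1}$, and in (ii) because $t^{-r_1}=o(\ln t\cdot t^{-r_1})$. Combining these estimates gives exactly the announced asymptotic rates.

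The technical work is essentially routine integration, so I do not anticipate a real obstacle; the only mild care points are (a) checking that the hypotheses of Proposition \ref{lemma4.1} are inherited under the power choices (the condition $\dot\beta(t)\leq \beta(t)/\delta$ reads $r_1\leq t/\delta$, which is exactly the stated assumption $r_1<t/\delta$ ensuring the bound holds on $[t_0,+\infty)$), and (b) keeping track of which term dominates in each regime when aggregating $1/\beta(t)$ with the integral. Passing from the bound on $\tilde{E}(t)$ to the bound on the primal--dual Lagrangian gap is immediate from the nonnegativity of every other summand in \eqref{4.1}, so no extra argument is required.
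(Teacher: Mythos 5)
Your proposal is correct and follows essentially the same route as the paper: both derive the master bound $\tilde{E}(t)\leq \beta(t_0)\tilde{E}(t_0)/\beta(t)+\frac{\|x^*\|^2+\|y^*\|^2}{2\delta\beta(t)}\int_{t_0}^{t}\beta(s)\epsilon(s)\,ds$ from the differential inequality $\frac{d}{dt}(\beta(t)\tilde{E}(t))\leq \frac{\beta(t)\epsilon(t)}{2\delta}(\|x^*\|^2+\|y^*\|^2)$ established in Theorem \ref{lam4.2}, then evaluate the integral of $c\,s^{r_1-r_2}$ in the two exponent regimes and conclude via the nonnegativity of the remaining summands of $\tilde{E}(t)$ in (\ref{4.1}). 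Your bookkeeping of which term dominates and your reading of the condition $r_1<t/\delta$ match the paper's argument.
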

\begin{proof}
(i) Since $\beta(t)=t^{r_1}$ and $\epsilon (t)=\frac{c}{t^{r_2}}$, it follows from (\ref{4.1}) that
\begin{eqnarray} \label{3.13}
\begin{split}
\tilde{E}(t)=&\mathcal{L}( x(t),y(t),\lambda ^*)-\mathcal{L}( x^*,y^*,\lambda ^*)+\frac{c}{2t^{r_2}} \|x(t)\|^2+\frac{c}{2t^{r_2}}\|y(t)\|^2\\
&+\frac{1}{2t^{r_1}}\left\| \frac{1}{\delta }( x(t)-x^* )+\dot{x}(t)\right \|^2+\frac{\delta \gamma -1}{2\delta^2t^{r_1}}\| x(t)-x^* \|^2\\
&+\frac{1}{2t^{r_1}}\left \| \frac{1}{\delta }( y(t)-y^*)+\dot{y}(t) \right\|^2+\frac{\delta \gamma -1}{2\delta ^2t^{r_1}} \| y(t)-y^*\|^2+\frac{1}{2\delta t^{r_1}} \| \lambda (t)-\lambda ^*\|^2.
\end{split}
\end{eqnarray}
Since $1<r_2\leq r_1+1$, $r_1\leq\frac{t}{\delta}$ and $\frac{1}{\delta}\leq\gamma$, it is easy to show that all the conditions in Theorem \ref{lam4.2} are satisfied. Using a similar argument as that given for Theorem \ref{lam4.2}, we have
\begin{eqnarray*}
\begin{split}
\tilde{E}(t)\leq& \frac{t^{r_1}_0\tilde{E}(t_0)}{t^{r_1}}+\frac{\| x^* \|^2+ \| y^* \|^2}{2\delta t^{r_1}}\int_{t_0}^{t}\frac{c}{s^{{r_2}-{r_1}}}ds\\
\leq&\frac{t^{r_1}_0\tilde{E}(t_0)}{t^{r_1}}+\frac{c(\| x^* \|^2+ \| y^* \|^2)}{2\delta({r_1}-{r_2}+1)t^{{r_2}-1}}.
\end{split}
\end{eqnarray*}
Then,
\begin{eqnarray*}
t^{{r_2}-1}\tilde{E}(t)\leq\frac{t^{r_1}_0\tilde{E}(t_0)}{t^{{r_1}-{r_2}+1}}+\frac{c(\| x^* \|^2+ \| y^* \|^2)}{2\delta({r_1}-{r_2}+1)}.
\end{eqnarray*}
This, together with (\ref{3.13}) and $1<r_2<r_1+1$, implies
\begin{eqnarray*}
\mathcal{L}(x(t),y(t),\lambda^*)-\mathcal{L}(x^*,y^*,\lambda^*)=\mathcal{O}\left(\frac{1}{t^{r_2-1}}\right),~~\textup{as}~t\rightarrow +\infty.
\end{eqnarray*}

(ii) Since $r_2=r_1+1$, (\ref{3.13}) can be written as:
\begin{eqnarray} \label{3.14}
\begin{split}
\tilde{E}(t)=&\mathcal{L}( x(t),y(t),\lambda ^*)-\mathcal{L}( x^*,y^*,\lambda ^*)+\frac{c}{2t^{r_1+1}} \|x(t)\|^2+\frac{c}{2t^{r_1+1}}\|y(t)\|^2\\
&+\frac{1}{2t^{r_1}}\left\| \frac{1}{\delta }( x(t)-x^* )+\dot{x}(t)\right \|^2+\frac{\delta \gamma -1}{2\delta^2t^{r_1}}\| x(t)-x^* \|^2\\
&+\frac{1}{2t^{r_1}}\left \| \frac{1}{\delta }( y(t)-y^*)+\dot{y}(t) \right\|^2+\frac{\delta \gamma -1}{2\delta ^2t^{r_1}} \| y(t)-y^*\|^2+\frac{1}{2\delta t^{r_1}} \| \lambda (t)-\lambda ^*\|^2.
\end{split}
\end{eqnarray}
Thus,
\begin{eqnarray*}
\begin{split}
\tilde{E}(t)\leq& \frac{t^{r_1}_0\tilde{E}(t_0)}{t^{r_1}}+\frac{\| x^* \|^2+ \| y^* \|^2}{2\delta t^{r_1}}\int_{t_0}^{t}\frac{c}{s}ds\\
=&\frac{t^{r_1}_0\tilde{E}(t_0)}{t^{r_1}}+\frac{c(\| x^* \|^2+ \| y^* \|^2)\ln t}{2\delta t^{r_1}}-\frac{c(\| x^* \|^2+ \| y^* \|^2)\ln t_0}{2\delta t^{r_1}}\\
\leq&\frac{C_4}{2\delta t^{r_1}}+\frac{c(\| x^* \|^2+ \| y^* \|^2)\ln t}{2\delta t^{r_1}},
\end{split}
\end{eqnarray*}
where $C_4\geq 2\delta t^{r_1}_0\tilde{E}(t_0)-c(\| x^* \|^2+ \| y^* \|^2)\ln t_0$. This, together with (\ref{3.14}), implies
\begin{eqnarray*}
\mathcal{L}(x(t),y(t),\lambda^*)-\mathcal{L}(x^*,y^*,\lambda^*)=\mathcal{O}\left(\frac{\ln t}{t^{r_1}}\right),~~\textup{as}~t\rightarrow +\infty.
\end{eqnarray*}
The proof is complete. \qed
\end{proof}

\section{Strong Convergence Results}

In this section, we will show that the trajectory generated by the dynamical system $(\ref{1.3})$ converges strongly to a minimal norm solution of the  separable convex optimization problem $(\ref{1.1})$.

Let $( \bar{x}^*,\bar{y}^*)$ be an element of the minimal norm of the solution set $S$. This means that $( \bar{x}^*,\bar{y}^* )=\mbox{proj}_S0$, where $\mbox{proj}$ is the projection operator. Then, there exists an optimal solution $\bar{\lambda }^*\in \mathcal{Z}$ of problem (\ref{1.6}) such that $(\bar{x}^*,\bar{y}^*,\bar{\lambda }^*)\in\Omega$.
For any $\epsilon(t) > 0$, we define $\mathcal{L}_{\epsilon(t)} :\mathcal{X}\times\mathcal{Y}\rightarrow \mathbb{R}$ by
\begin{eqnarray}\label{5.9}
\mathcal{L}_{\epsilon(t)} ( x,y):=\mathcal{L} ( x,y,\bar{\lambda }^* )+\frac{\epsilon(t) }{2} \| x \|^2+\frac{\epsilon(t) }{2} \| y \|^2.
\end{eqnarray}
Let $\left ( x_{\epsilon(t)} ,y_{\epsilon(t)} \right )$ be the unique solution of the strongly convex minnimization problem
\begin{eqnarray*}
\left ( x_{\epsilon(t)} ,y_{\epsilon(t)} \right )=\mathop{\textup{argmin}}\limits_{x\in\mathcal{X},y\in\mathcal{Y}}{\mathcal{L}_{\epsilon(t)}(x,y)}.
\end{eqnarray*}
The first-order optimality condition gives
\begin{eqnarray}\label{5.1}
\left\{ \begin{array}{ll}
\nabla _x\mathcal{L}_{\epsilon(t)}(x,y) =\nabla _x\mathcal{L}( x_{\epsilon(t)} ,y_{\epsilon(t)} ,\bar{\lambda }^* )+\epsilon(t) x_{\epsilon(t)} =0,\\
\nabla _y\mathcal{L}_{\epsilon(t)}(x,y) =\nabla _y\mathcal{L} ( x_{\epsilon(t)} ,y_{\epsilon(t)} ,\bar{\lambda }^* )+\epsilon(t) y_{\epsilon(t)} =0.
\end{array}
 \right.
\end{eqnarray}
By the classical properties of the Tikhonov regularization given in \cite{bcl2021}, we get
\begin{eqnarray*}
\begin{split}
\| x_{\epsilon(t)} \|\leq \| \bar{x}^* \|,~ \| y_{\epsilon(t)}  \|\leq \| \bar{y}^*\|,~~~\forall \epsilon(t) > 0,
\end{split}
\end{eqnarray*}
and
\begin{eqnarray}\label{5.8}
\lim_{t \to +\infty} \| x_{\epsilon(t)} -\bar{x}^* \|=0,~\lim_{t \to +\infty} \| y_{\epsilon(t)} -\bar{y}^*\|=0.
\end{eqnarray}

The following auxiliary result will play an important role in our study.
\begin{proposition}\label{lem5.1}
Let $( \bar{x}^*,\bar{y}^* )=\mathrm{proj}_S0$ and $( x,y,\lambda ):[t_0,+\infty)\rightarrow\mathcal{X}\times\mathcal{Y}\times\mathcal{Z}$ be a solution of the dynamical system \textup{(\ref{1.3})}. Then,
\begin{eqnarray*}
\begin{split}
\mathcal{L}_{\epsilon (t)}( x(t),y (t) )-\mathcal{L}_{\epsilon(t)}( \bar{x}^*,\bar{y}^* )\geq &\frac{\epsilon (t)}{2} \|(x(t), y(t))-(x_{\epsilon (t)},y_{\epsilon (t)})\|^2\\
&+\frac{\epsilon (t)}{2}( \| x_{\epsilon(t)}\|^2-\| \bar{x}^*\|^2+ \| y_{\epsilon (t)} \|^2- \| \bar{y}^* \|^2).
\end{split}
\end{eqnarray*}
\end{proposition}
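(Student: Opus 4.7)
The proof is essentially a two-step decomposition of the left-hand side through the intermediate point $(x_{\epsilon(t)},y_{\epsilon(t)})$, exploiting (a) strong convexity of $\mathcal{L}_{\epsilon(t)}$ around its minimizer and (b) the saddle point property of $(\bar x^*,\bar y^*,\bar\lambda^*)$.

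\textbf{Step 1: Strong convexity of $\mathcal{L}_{\epsilon(t)}$.} The plan is to observe that $(x,y)\mapsto\mathcal{L}(x,y,\bar\lambda^*)=f(x)+g(y)+\langle\bar\lambda^*,Ax+By-b\rangle+\tfrac12\|Ax+By-b\|^2$ is convex as the sum of $f$, $g$, an affine function, and the composition of the convex quadratic $\tfrac12\|\cdot\|^2$ with the affine map $(x,y)\mapsto Ax+By-b$. Adding the quadratic penalty $\tfrac{\epsilon(t)}{2}(\|x\|^2+\|y\|^2)$ therefore makes $\mathcal{L}_{\epsilon(t)}$ $\epsilon(t)$-strongly convex on $\mathcal{X}\times\mathcal{Y}$. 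Since $(x_{\epsilon(t)},y_{\epsilon(t)})$ is its unique minimizer (and satisfies the first-order condition (\ref{5.1})), the standard strong convexity inequality at the minimizer yields
\begin{equation*}
\mathcal{L}_{\epsilon(t)}(x(t),y(t))-\mathcal{L}_{\epsilon(t)}(x_{\epsilon(t)},y_{\epsilon(t)})\geq \frac{\epsilon(t)}{2}\|(x(t),y(t))-(x_{\epsilon(t)},y_{\epsilon(t)})\|^2.
\end{equation*}

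\textbf{Step 2: Comparison at $(\bar x^*,\bar y^*)$ via the saddle point property.} Next I would expand
\begin{equation*}
\mathcal{L}_{\epsilon(t)}(x_{\epsilon(t)},y_{\epsilon(t)})-\mathcal{L}_{\epsilon(t)}(\bar x^*,\bar y^*)=\bigl[\mathcal{L}(x_{\epsilon(t)},y_{\epsilon(t)},\bar\lambda^*)-\mathcal{L}(\bar x^*,\bar y^*,\bar\lambda^*)\bigr]+\frac{\epsilon(t)}{2}\bigl(\|x_{\epsilon(t)}\|^2-\|\bar x^*\|^2+\|y_{\epsilon(t)}\|^2-\|\bar y^*\|^2\bigr).
\end{equation*}
Since $(\bar x^*,\bar y^*,\bar\lambda^*)\in\Omega$, the right-hand inequality of the saddle point relation (\ref{5555}) gives $\mathcal{L}(\bar x^*,\bar y^*,\bar\lambda^*)\leq\mathcal{L}(x_{\epsilon(t)},y_{\epsilon(t)},\bar\lambda^*)$, so the bracketed term is nonnegative and can be dropped.

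\textbf{Step 3: Assembly.} Adding the inequalities produced in Steps 1 and 2 telescopes $\mathcal{L}_{\epsilon(t)}(x_{\epsilon(t)},y_{\epsilon(t)})$ and yields exactly
\begin{equation*}
\mathcal{L}_{\epsilon(t)}(x(t),y(t))-\mathcal{L}_{\epsilon(t)}(\bar x^*,\bar y^*)\geq\frac{\epsilon(t)}{2}\|(x(t),y(t))-(x_{\epsilon(t)},y_{\epsilon(t)})\|^2+\frac{\epsilon(t)}{2}\bigl(\|x_{\epsilon(t)}\|^2-\|\bar x^*\|^2+\|y_{\epsilon(t)}\|^2-\|\bar y^*\|^2\bigr),
\end{equation*}
which is the claim. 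No obstacle is really hard here; the only place that requires care is verifying that $\mathcal{L}(\cdot,\cdot,\bar\lambda^*)$ (the augmented Lagrangian, not just the ordinary one) is convex in $(x,y)$ so that the full $\epsilon(t)$ appears as the strong convexity modulus—but this is automatic because the penalty $\tfrac12\|Ax+By-b\|^2$ is convex.
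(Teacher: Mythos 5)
Your proposal is correct and follows essentially the same route as the paper: the strong convexity inequality at the minimizer $(x_{\epsilon(t)},y_{\epsilon(t)})$ (using the first-order condition (\ref{5.1})), the comparison $\mathcal{L}(x_{\epsilon(t)},y_{\epsilon(t)},\bar\lambda^*)\geq\mathcal{L}(\bar x^*,\bar y^*,\bar\lambda^*)$ from the saddle point property, and the telescoping sum. The only cosmetic difference is that you justify the $\epsilon(t)$-strong convexity of $\mathcal{L}_{\epsilon(t)}$ explicitly, which the paper simply asserts.
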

\begin{proof}
Clearly, $\mathcal{L}_{\epsilon (t)}$ is $\epsilon (t)$-strongly convex with respect to $x$ and $y$. Then,
\begin{eqnarray*}
\begin{split}
&\mathcal{L}_{\epsilon (t)}( x(t),y(t))-\mathcal{L}_{\epsilon (t)}( x_{\epsilon(t)},y_{\epsilon (t)} )\\
\geq &~\langle \nabla _x\mathcal{L}_{\epsilon (t)} ( x_{\epsilon(t) },y_{\epsilon(t)}),x(t)-x_{\epsilon (t)} \rangle+ \langle \nabla_y\mathcal{L}_{\epsilon (t)}( x_{\epsilon(t) },y_{\epsilon(t) } ),y(t)-y_{\epsilon(t)}\rangle\\
&+\frac{\epsilon (t)}{2}\|(x(t), y(t))-(x_{\epsilon(t)},y_{\epsilon (t)})\|^2.
\end{split}
\end{eqnarray*}
By $(\ref{5.1})$, we have
\begin{eqnarray}\label{prop51}
\mathcal{L}_{\epsilon (t)} ( x(t),y(t))-\mathcal{L}_{\epsilon (t)}( x_{\epsilon(t)},y_{\epsilon(t)} )
\geq\frac{\epsilon(t)}{2} \|(x(t), y(t))-(x_{\epsilon (t)},y_{\epsilon(t)}) \|^2.
\end{eqnarray}
On the other hand, by $\mathcal{L}(x_{\epsilon(t)},y_{\epsilon (t)},\bar{\lambda }^* )-\mathcal{L}( \bar{x}^*,\bar{y}^*,\bar{\lambda }^*)\geq 0$ and (\ref{5.9}), we get
\begin{eqnarray}\label{prop52}
\mathcal{L}_{\epsilon (t)} ( x_{\epsilon (t)},y_{\epsilon (t)} )-\mathcal{L}_{\epsilon(t)}( \bar{x}^*,\bar{y}^*)
\geq \frac{\epsilon (t)}{2}( \| x_{\epsilon (t)} \|^2 -\| \bar{x}^* \|^2 )+\frac{\epsilon (t)}{2}( \| y_{\epsilon (t)} \|^2 -\| \bar{y}^*\|^2).
\end{eqnarray}
Together with $(\ref{prop51})$ and $(\ref{prop52})$, we obtain
\begin{eqnarray*}
\begin{split}
\mathcal{L}_{\epsilon (t)} ( x(t),y (t) )-\mathcal{L}_{\epsilon(t)} ( \bar{x}^*,\bar{y}^*)\geq &\frac{\epsilon(t)}{2} \|(x(t), y(t))-(x_{\epsilon (t)},y_{\epsilon (t)})\|^2\\
&+\frac{\epsilon(t)}{2}( \| x_{\epsilon(t)} \|^2-\| \bar{x}^* \|^2+\| y_{\epsilon (t)} \|^2-\| \bar{y}^* \|^2 ).
\end{split}
\end{eqnarray*}
The proof is complete.\qed
\end{proof}

Now, we establish the following  strong convergence results for the trajectory generated by the dynamical system (\ref{1.3}).
\begin{theorem}\label{TH5.1}
Suppose that
\begin{equation*}
\int_{t_0}^{+\infty}\epsilon ( t )dt<+\infty,~\lim_{t\to+\infty}\beta (t)\epsilon (t)=+\infty,~\dot{\beta }(t)\leqslant \frac{1}{\delta }\beta (t),\mathrm{~and~}\frac{1}{\delta}\leq \gamma.
\end{equation*}
 Let $ ( \bar{x}^*, \bar{y}^*)=\mathrm{proj}_S0$ and $( x,y,\lambda):[t_0,+\infty)\rightarrow\mathcal{X}\times\mathcal{Y}\times\mathcal{Z}$ be a solution of the dynamical system \textup{(\ref{1.3})}. Then,
\begin{eqnarray*}
\liminf_{t\to+\infty}\|  ( x(t),y(t))- ( \bar{x}^*,\bar{y}^*) \|=0.
\end{eqnarray*}
In addition, if there exists $T\geq t_0$ such that the trajectory $\{( x(t),y(t)):t\geq T\}$ stays either in the ball $\mathbb{B}\left( 0,\sqrt{ \| \bar{x}^* \|^2+\| \bar{y}^*\|^2}\right)$ or in its complement, then,
\begin{eqnarray*}
\lim_{t\to+\infty} \| ( x(t),y(t))- ( \bar{x}^*,\bar{y}^* ) \|=0.
\end{eqnarray*}
\end{theorem}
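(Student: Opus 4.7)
The plan is to combine the energy decay from Theorem~\ref{lam4.2} with the strong-convexity inequality of Proposition~\ref{lem5.1} and the classical Tikhonov convergence~(\ref{5.8}). First, I would verify that the hypotheses of Theorem~\ref{lam4.2} are all in force: the standing assumption $\epsilon(t)\to 0$ combined with $\lim_{t\to\infty}\beta(t)\epsilon(t) = +\infty$ forces $\lim_{t\to\infty}\beta(t)=+\infty$. Invoking Theorem~\ref{lam4.2} with $(x^*, y^*, \lambda^*) = (\bar{x}^*, \bar{y}^*, \bar{\lambda}^*)$ gives $\tilde E(t) \to 0$, so that both the Lagrangian gap $\mathcal{L}(x(t), y(t), \bar{\lambda}^*) - \mathcal{L}(\bar{x}^*, \bar{y}^*, \bar{\lambda}^*)$ and $\epsilon(t)(\|x(t)\|^2 + \|y(t)\|^2)$ tend to zero. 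Rearranging Proposition~\ref{lem5.1} and discarding the non-positive term $\frac{\epsilon(t)}{2}(\|x_{\epsilon(t)}\|^2 + \|y_{\epsilon(t)}\|^2 - \|\bar{x}^*\|^2 - \|\bar{y}^*\|^2)$ via $\|x_{\epsilon(t)}\| \leq \|\bar{x}^*\|$ and $\|y_{\epsilon(t)}\| \leq \|\bar{y}^*\|$ yields the key decay
\[
\epsilon(t)\,\|(x(t),y(t)) - (x_{\epsilon(t)},y_{\epsilon(t)})\|^2 \leq 2\tilde E(t) \to 0.
\]
For the unconditional lim inf assertion, combining this with $\beta(t)\epsilon(t)\to +\infty$ and the Lyapunov bound $\beta(t)\tilde E(t) \leq \beta(t_0)\tilde E(t_0) + \frac{\|\bar{x}^*\|^2+\|\bar{y}^*\|^2}{2\delta}\int_{t_0}^t\beta(s)\epsilon(s)\,ds$ from the proof of Theorem~\ref{lam4.2}, together with an averaging argument in the spirit of Lemma~\ref{lem2.1}, would produce a sequence $t_n\to\infty$ along which $\|(x(t_n),y(t_n)) - (x_{\epsilon(t_n)},y_{\epsilon(t_n)})\|\to 0$; combined with~(\ref{5.8}) and the triangle inequality, this yields the lim inf statement.

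For the full limit under the geometric dichotomy, set $r := \sqrt{\|\bar{x}^*\|^2 + \|\bar{y}^*\|^2}$. In Case~A, where $(x(t),y(t))\in\mathbb{B}(0,r)$ for $t\geq T$, the trajectory is bounded, so every sequence $t_n\to\infty$ admits a weakly convergent subsequence $(x(t_n),y(t_n))\rightharpoonup(\hat{x},\hat{y})$. Weak lower semi-continuity of the convex continuous function $\mathcal{L}(\cdot,\cdot,\bar{\lambda}^*)$ combined with $\mathcal{L}(x(t),y(t),\bar{\lambda}^*)\to \mathcal{L}(\bar{x}^*, \bar{y}^*, \bar{\lambda}^*)$ places $(\hat{x},\hat{y})\in D(\bar{\lambda}^*)\subseteq S$ via~(\ref{1.4}); weak lower semi-continuity of the norm together with $\|(x(t_n),y(t_n))\|\leq r$ forces $(\hat{x},\hat{y}) = (\bar{x}^*,\bar{y}^*)$ by uniqueness of the minimum-norm projection. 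Hence $(x(t),y(t))\rightharpoonup(\bar{x}^*,\bar{y}^*)$ along the whole net, and the norm bound $\limsup\|(x(t),y(t))\|\leq r = \|(\bar{x}^*,\bar{y}^*)\|$ combined with the Kadec--Klee (Radon--Riesz) property of the Hilbert norm upgrades weak convergence to strong. In Case~B, where $\|(x(t),y(t))\|>r$ for $t\geq T$, the reverse triangle inequality $\|(x(t),y(t))\|\leq \|(x_{\epsilon(t)},y_{\epsilon(t)})\| + \|(x(t),y(t)) - (x_{\epsilon(t)},y_{\epsilon(t)})\|$ together with the previously established decay and $\|(x_{\epsilon(t)},y_{\epsilon(t)})\|\to r$ forces $\|(x(t),y(t))\|\to r$, after which the same weak-compactness plus Kadec--Klee scheme applies.

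The hardest step is the conversion of the decay $\epsilon(t)\|(x(t),y(t)) - (x_{\epsilon(t)},y_{\epsilon(t)})\|^2 \to 0$ (where also $\epsilon(t)\to 0$) into a genuine norm-convergence statement; without additional information this yields only $\|(x(t),y(t)) - (x_{\epsilon(t)},y_{\epsilon(t)})\|^2 = o(1/\epsilon(t))$, which is why the unconditional assertion is only a lim inf and why the dichotomy is needed for the full limit. Case~A supplies a hard a priori upper bound on the norm that enables Kadec--Klee directly, whereas Case~B requires first extracting $\|(x(t),y(t))\|\to r$ via the sharp form of Proposition~\ref{lem5.1} combined with~(\ref{5.8}) before the same weak-compactness scheme applies.
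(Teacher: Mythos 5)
Your Case~A (trajectory eventually inside the ball) is essentially the paper's own argument: weak sequential cluster points, weak lower semicontinuity of $\mathcal{L}(\cdot,\cdot,\bar{\lambda}^*)$ and of the norm, identification of the cluster point with $\mathrm{proj}_S0$ via (\ref{1.4}), and the Radon--Riesz upgrade. The other two parts of your proposal contain genuine gaps.

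First, Case~B. All that Proposition~\ref{lem5.1} combined with Theorem~\ref{lam4.2} gives you is $\epsilon(t)\,\|(x(t),y(t))-(x_{\epsilon(t)},y_{\epsilon(t)})\|^2\le 2\tilde E(t)\to 0$, which, as you yourself observe, only yields $\|(x(t),y(t))-(x_{\epsilon(t)},y_{\epsilon(t)})\|=o\bigl(1/\sqrt{\epsilon(t)}\bigr)$. Your Case~B then invokes the triangle inequality ``together with the previously established decay'' to conclude $\|(x(t),y(t))\|\to r$ --- but that step requires $\|(x(t),y(t))-(x_{\epsilon(t)},y_{\epsilon(t)})\|\to 0$, which you have not established. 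The missing idea is that the hypothesis ``trajectory stays outside the ball'' must be fed back into the Lyapunov analysis rather than used only a posteriori. The paper works with $\hat E(t)=\tilde E(t)-\frac{\epsilon(t)}{2}(\|\bar x^*\|^2+\|\bar y^*\|^2)$; the differential inequality of Proposition~\ref{lemma4.1} then carries the term $\frac{1}{2}\bigl[(\dot\beta(t)/\beta(t)-1/\delta)\epsilon(t)+\dot\epsilon(t)\bigr](\|x(t)\|^2+\|y(t)\|^2-\|\bar x^*\|^2-\|\bar y^*\|^2)$, whose bracket is nonpositive, so outside the ball this term is $\le 0$ and $\frac{d}{dt}(\beta(t)\hat E(t))\le 0$ with no $\int\beta\epsilon$ source. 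Hence $\hat E(t)\le \beta(T)\hat E(T)/\beta(t)$, and dividing the inequality of Proposition~\ref{lem5.1} by $\epsilon(t)/2$ and using $\beta(t)\epsilon(t)\to+\infty$ together with (\ref{5.8}) gives $\|(x(t),y(t))-(x_{\epsilon(t)},y_{\epsilon(t)})\|\to 0$ genuinely; strong convergence then follows from (\ref{5.8}) by the triangle inequality, with no weak compactness or Kadec--Klee argument needed in this case.

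Second, the unconditional $\liminf$. Your ``averaging argument in the spirit of Lemma~\ref{lem2.1}'' is not substantiated, and I do not see how to make it work: the integral estimates of Theorem~\ref{the4.1} are unavailable here (since $\beta(t)\epsilon(t)\to+\infty$ forces $\int_{t_0}^{+\infty}\beta\epsilon\,dt=+\infty$), and the bound $\tilde E(t)\le \beta(t_0)\tilde E(t_0)/\beta(t)+\frac{r^2}{2\delta\beta(t)}\int_{t_0}^{t}\beta(s)\epsilon(s)ds$ does not give $\tilde E(t_n)/\epsilon(t_n)\to 0$ along any sequence in general (try $\beta(t)\epsilon(t)=e^{t}$). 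The paper instead obtains the $\liminf$ from the exhaustive trichotomy: if the trajectory neither eventually stays in the ball nor eventually stays outside it, it meets the sphere $\|(x,y)\|=\sqrt{\|\bar x^*\|^2+\|\bar y^*\|^2}$ along a sequence $t_n\to+\infty$; applying the Case~A machinery to that sequence (weak cluster point equal to $(\bar x^*,\bar y^*)$ plus the exact norm equality $\|(x(t_n),y(t_n))\|=\|(\bar x^*,\bar y^*)\|$) yields strong convergence of $(x(t_n),y(t_n))$, hence the $\liminf$. You should replace the averaging step by this trichotomy.
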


\begin{proof} We analyze the behaviors of the
trajectory $\{( x(t),y(t)):t\geq T\}$ depending on its position with respect to the ball $\mathbb{B}\left ( 0,\sqrt{\| \bar{x}^* \|^2+ \| \bar{y}^* \|^2}\right )$. To do this, we  consider the following three configurations of the trajectory $\{( x(t),y(t)):t\geq T\}$.

$\mathbf{ Case~ I}$: Suppose  that there exists $T\geq t_0$ such that $\{( x(t),y(t)):t\geq T\}$ stays in the complement of  the ball $\mathbb{B}\left ( 0,\sqrt{\| \bar{x}^* \|^2+ \| \bar{y}^* \|^2}\right )$. In other words,
\begin{eqnarray}\label{5.3}
\| x(t)\|^2+\| y(t) \|^2\geq \| \bar{x}^*\|^2+\| \bar{y}^*\|^2,~~\forall t\geq T.
\end{eqnarray}
From $ ( \bar{x}^*,\bar{y}^* )=\mbox{proj}_S0$, there exists $\bar{\lambda }^*\in \mathcal{Z}$ of (\ref{1.6}) such that $( \bar{x}^*,\bar{y}^*,\bar{\lambda }^* )\in\Omega$. Then, we consider the following energy function:
\begin{eqnarray}\label{5.4}
\begin{split}
\hat{E}(t)=&\mathcal{L}_{\epsilon (t)} ( x(t),y(t))-\mathcal{L}_{\epsilon(t)}( \bar{x}^*,\bar{y}^*)+\frac{1}{2\beta(t)}\left \| \frac{1}{\delta}  ( x(t)-\bar{x}^*)+\dot{x}(t) \right\|^2\\
&+\frac{\delta \gamma -1}{2{\delta}^2 \beta (t)} \| x(t)-\bar{x}^* \|^2+\frac{1}{2\beta (t)}\left \| \frac{1}{\delta}  ( y(t)-\bar{y}^*)+\dot{y}(t)\right \|^2\\
&+\frac{\delta \gamma -1}{2{\delta} ^2\beta (t)}\| y(t)-\bar{y}^* \|^2+\frac{1}{2\delta\beta(t) } \| \lambda (t)-\bar{\lambda }^* \|^2.
\end{split}
\end{eqnarray}
Clearly,
$
\hat{E}(t)=\tilde{E}(t)-\frac{\epsilon (t)}{2}( \| \bar{x}^*\|^2+ \| \bar{y}^*\ \|^2 ).
$
Thus,
$$
\dot{\hat{E}}(t)=\dot{\tilde{E}}(t)-\frac{\dot{\epsilon}(t)}{2}\left( \| \bar{x}^*\|^2+\| \bar{y}^* \|^2 \right).
$$
Using a similar argument as that given in Proposition \ref{lemma4.1}, we obtain
\begin{eqnarray*}
\begin{split}
\frac{\dot{\beta }(t)}{\beta (t)}\hat{E}(t)+\dot{\hat{E}}(t)\leq& \left ( \frac{\dot{\beta }(t)}{\beta (t)}-\frac{1}{\delta }\right )(\mathcal{L}( x(t),y(t),\lambda ^*)-\mathcal{L}( x^*,y^*,\lambda ^*))\\
&+\frac{1}{2}\left[\left ( \frac{\dot{\beta }(t)}{\beta(t)}-\frac{1}{\delta }\right)\epsilon(t)+\dot{\epsilon }(t)\right ]\left ( \| x(t) \|^2+\| y(t)\|^2- \| \bar{x}^*\ \|^2-\| \bar{y}^* \|^2\right )\\
&-\frac{\epsilon (t)}{2\delta }\left ( \| x(t)-x^* \|^2+ \| y(t)-y^*\|^2\right )+\frac{1-\delta\gamma}{\delta \beta (t)}\left( \| \dot{x}(t)\|^2+ \| \dot{y}(t)\|^2\right)\\
&-\frac{1}{2\delta }\| Ax(t)+By(t)-b\|^2.
\end{split}
\end{eqnarray*}
Note that $\dot{\beta }(t)\leq \frac{1}{\delta }\beta (t)$, $\frac{1}{\delta }\leq \gamma $ and $\mathcal{L}( x(t),y(t),\lambda ^*)-\mathcal{L}( x^*,y^*,\lambda ^*)\geq0$. Thus, it follows that
\begin{eqnarray*}
\frac{\dot{\beta }(t)}{\beta (t)}\hat{E}(t)+\dot{\hat{E}}(t)\leq \frac{1}{2}\left[\left ( \frac{\dot{\beta }(t)}{\beta (t)}-\frac{1}{\delta }\right)\epsilon(t)+\dot{\epsilon }(t)\right ]\left ( \| x(t)\|^2+\| y(t)\|^2- \| \bar{x}^* \|^2-\| \bar{y}^* \|^2\right ).
\end{eqnarray*}
This, together with (\ref{5.3}), implies
\begin{eqnarray*}
\frac{\dot{\beta }(t)}{\beta(t)}\hat{E}(t)+\dot{\hat{E}}(t)\leq 0,~~\forall t\geq T.
\end{eqnarray*}
It follows that
\begin{eqnarray*}
\frac{d}{dt}\left ( \beta (t)\hat{E}(t)\right )\leq 0,~~\forall t\geq T.
\end{eqnarray*}
Integrating it from $t$ to $T$, we have
\begin{eqnarray*}
\beta (t)\hat{E}(t)\leq \beta (T)\hat{E}(T),~~\forall t\geq T.
\end{eqnarray*}
Using (\ref{5.4}), we have
\begin{eqnarray*}
\mathcal{L}_{\epsilon(t)}( x(t),y(t))-\mathcal{L}_{\epsilon (t)}( \bar{x}^*,\bar{y}^* )\leq \hat{E}(t)\leq\frac{\beta(T)\hat{E}(T)}{\beta (t)},~~\forall t\geq T.
\end{eqnarray*}
By Proposition \ref{lem5.1}, we obtain
\begin{eqnarray*}
\begin{split}
&\frac{\epsilon (t)}{2} \|(x(t), y(t))-(x_{\epsilon(t)},y_{\epsilon (t)}) \|^2+\frac{\epsilon(t)}{2}( \| x_{\epsilon (t)} \|^2- \| \bar{x}^* \|^2+ \| y_{\epsilon (t)} \|^2-\| \bar{y}^* \|^2 )\\
\leq &\frac{\beta (T)\hat{E}(T)}{\beta (t)}.
\end{split}
\end{eqnarray*}
Thus,
\begin{eqnarray*}
0\leq \|(x(t), y(t))-(x_{\epsilon(t)},y_{\epsilon(t)}) \|^2\leq \frac{2\beta (T)\hat{E}(T)}{\beta (t)\epsilon (t)}+\| \bar{x}^* \|^2-\| x_{\epsilon(t)} \|^2+ \| \bar{y}^* \|^2- \| y_{\epsilon(t)}\|^2.
\end{eqnarray*}
Taking  (\ref{5.8}) and $\lim\limits_{t\to+\infty}\beta (t)\epsilon(t)=+\infty$ into account, we get
\begin{eqnarray*}
\lim_{t\to+\infty}\|( x(t),y(t))-( \bar{x}^*,\bar{y}^* ) \|=0.
\end{eqnarray*}

$\mathbf{ Case~ II}$: Assume  that there exists $T\geq t_0$ such that $\{ ( x(t),y(t) ):t\geq T\}$ stays in  the ball $\mathbb{B}\left ( 0,\sqrt{ \| \bar{x}^* \|^2+ \| \bar{y}^*\|^2}\right )$. Equivalently,
$
 \| x(t)\|^2+\| y(t) \|^2<  \| \bar{x}^*\|^2+\| \bar{y}^*\|^2$, $\forall t\geq T.$
This means that
\begin{eqnarray}\label{5.5}
\|(x(t),y(t))\|<\|(\bar{x}^*,\bar{y}^*)\|,~~\forall t\geq T.
\end{eqnarray}
Let $( \bar{x},\bar{y} )$ be a weak cluster point of $\{ ( x(t),y(t))\}_{t\geq t_0}$. This means that there exists a sequence $\{ t_n\}_{n\in \mathbb{N}}\subseteq[T,+\infty)$ such that $t_n\rightarrow +\infty$ and
\begin{eqnarray*}
(x( t_n ),y ( t_n ))\rightharpoonup( \bar{x},\bar{y})~~as~~n\rightarrow +\infty.
\end{eqnarray*}
Since $\mathcal{L}(\cdot,\cdot,\bar{\lambda }^*)$ is weakly lower semicontinuous, we deduce that
\begin{eqnarray}\label{1909}
\mathcal{L}( \bar{x},\bar{y},\bar{\lambda }^*)\leq \liminf_{n\to+\infty}\mathcal{L}( x(t_n),y(t_n),\bar{\lambda }^*).
\end{eqnarray}
By $\lim\limits_{t\to+\infty}\epsilon (t)=0$ and $\lim\limits_{t\to+\infty}\beta (t)\epsilon (t)=+\infty$, we have $ \lim\limits_{t\to+\infty}\beta(t)=+\infty$.
Then, it follows from Theorem \ref{lam4.2} that
\begin{eqnarray}\label{19009}
\lim_{n\to+\infty}\mathcal{L} ( x(t_n),y(t_n),\bar{\lambda} ^*)=\mathcal{L}( \bar{x}^*,\bar{y}^*,\bar{\lambda} ^*).
\end{eqnarray}
Thus, we deduce from  $(\ref{1909})$ and  $(\ref{19009})$ that
$$
\mathcal{L}( \bar{x},\bar{y},\bar{\lambda }^* )\leq \mathcal{L} ( \bar{x}^*,\bar{y}^*,\bar{\lambda }^* ).
$$
Moreover, it follows from $( \bar{x}^*,\bar{y}^*,\bar{\lambda }^* )\in\Omega$ that
\begin{eqnarray*}
\mathcal{L}( \bar{x}^*,\bar{y}^*,\bar{\lambda }^*)=\min_{x\in \mathcal{X},y\in \mathcal{Y}}{\mathcal{L} ( x,y,\bar{\lambda }^* )}\leq \mathcal{L}( \bar{x},\bar{y},\bar{\lambda }^*),
\end{eqnarray*}
where the inequality holds due to $(\ref{5555})$.
As a consequence,
\begin{eqnarray*}
\mathcal{L} ( \bar{x},\bar{y},\bar{\lambda }^*)= \mathcal{L}( \bar{x}^*,\bar{y}^*,\bar{\lambda }^*)=\min_{x\in \mathcal{X},y\in \mathcal{Y}}{\mathcal{L} ( x,y,\bar{\lambda }^*)}.
\end{eqnarray*}
This implies
$$
 ( \bar{x},\bar{y} )\in \mathop{\textup{argmin}}\limits_{x\in\mathcal{X},y\in\mathcal{Y}}{\mathcal{L}(x,y,\bar{\lambda}^*)}.
$$
Thus, together with $(\ref{1.4})$, we have $ ( \bar{x},\bar{y} )\in S$. On the other hand, from $(\ref{5.5})$ and the lower semi-continuity in the weak topology of $ \|\cdot\|$, we deduce that
\begin{eqnarray*}
\| (\bar{x},\bar{ y})\|\leq\liminf_{n\to+\infty}{ \|  ( x ( t_n) , y( t_n) )\|}\leq  \| (\bar{x}^*,\bar{ y}^*)\|.
\end{eqnarray*}
Combining this with  $ ( \bar{x},\bar{y})\in S$, $ ( \bar{x}^*,\bar{y}^* )=\mbox{proj}_S0$ and $S$ being a convex set, we obtain
\begin{eqnarray}\label{lower2}
( \bar{x},\bar{y})= ( \bar{x}^*,\bar{y}^*).
\end{eqnarray}
Thus, $\{ ( x(t),y(t) )\}_{t\geq t_0}$ has a unique cluster point $( \bar{x}^*,\bar{y}^*)$. Therefore,
\begin{eqnarray*}
( x(t),y(t))\rightharpoonup ( \bar{x}^*,\bar{y}^* )~~as~~t\rightarrow +\infty.
\end{eqnarray*}
In order to obtain the strong convergence, we use (\ref{5.5}), (\ref{lower2}) and the weak topology of $ \|\cdot\|$ to show that
\begin{eqnarray*}
\| ( \bar{x}^*,\bar{y}^* ) \|\leq \liminf_{t\to+\infty} \|( x(t),y(t))\|\leq\limsup_{t\to+\infty}{ \|  ( x(t) , y(t)) \|}\leq \| (\bar{x}^*,\bar{ y}^*)\|.
\end{eqnarray*}
This means that
$$
\lim_{t\to+\infty} \| ( x(t),y(t))\|= \| ( \bar{x}^*,\bar{y}^*)\|.
$$
Thus,
\begin{eqnarray*}
\lim_{t\to+\infty} \| ( x(t),y(t) )-( \bar{x}^*,\bar{y}^* )\|=0.
\end{eqnarray*}

$\mathbf{ Case~ III}$: For any $T\geq t_0$, there exists $ t\geq T$  such that
$\|(\bar{x}^*,\bar{y}^*)\|>\|(x(t),y(t))\|,$
and
also there exists  $ s\geq T$  such that
$\|(\bar{x}^*,\bar{y}^*)\|<\|(x(s),y(s))\|.$ Equivalently, the trajectory $\{ (x(t),y(t)):t\geq T\}$ remains neither in the ball $\mathbb{B}\left ( 0,\sqrt{\| \bar{x}^*\|^2+\| \bar{y}^*\|^2}\right)$, nor in the complement of $\mathbb{B}\left ( 0,\sqrt{\| \bar{x}^*\|^2+ \| \bar{y}^* \|^2}\right)$. Thus, there exists a sequence $ \left\{ t_n \right\}_{n\in\mathbb{ N}}\subseteq [ t_0,+\infty )$ such that $t_n\rightarrow +\infty$ as $n\rightarrow +\infty$, and
\begin{equation*}
\left \| x\left ( t_n\right )\right \|^2+\left \| y\left ( t_n\right )\right \|^2=\left \| \bar{x}^*\right \|^2+\left \| \bar{y}^*\right \|^2, \forall n\in \mathbb{N}.
\end{equation*}
This means that
\begin{eqnarray}\label{weakpoint}
\|(x(t_n),y(t_n))\|=\|(\bar{x}^*,\bar{y}^*)\|,~~\forall n\in \mathbb{N}.
\end{eqnarray}
Let $\left ( \hat{x},\hat{y}\right )$ be a weak sequential cluster point of $\left \{ (x\left ( t_n\right ),y\left ( t_n\right ))\right\}_{n\in \mathbb{N}}$. Using a similar argument as that given for Case II, we deduce that $\left ( \hat{x},\hat{y}\right )=\left ( \bar{x}^*,\bar{y}^*\right )$ and
\begin{eqnarray*}
\left ( x\left ( t_n\right ),y\left ( t_n\right )\right )\rightharpoonup\left ( \bar{x}^*,\bar{y}^*\right ).
\end{eqnarray*}
This, together with $(\ref{weakpoint})$, implies
$
\lim_{n\to+\infty}\| ( x( t_n),y( t_n))-( \bar{x}^*, \bar{y}^* )\|=0.
$
Thus,
\begin{eqnarray*}
\liminf_{t\to+\infty} \|  ( x(t),y(t))- ( \bar{x}^*, \bar{y}^* )\|=0.
\end{eqnarray*}
The proof is complete.\qed
\end{proof}
\begin{remark}
In \cite[Theorem 4.2]{ZHF2024}, Zhu et al. have shown that  the trajectory generated by its dynamical system converges strongly to the minimal solution of the linear equality constrained convex optimization problem (\ref{in1.3}) when the Tikhonov regularization parameter $\epsilon \left ( t\right )$ decreases slowly to zero. Using a similar argument as that given in \cite[Theorem 4.2]{ZHF2024}, we show that the trajectory $\left(x\left(t\right),y\left(t\right)\right)$ generated by the dynamical system (\ref{1.3}) converges strongly to a minimal solution of the separable convex optimization problem with linear equality constraints. Clearly, Theorem \ref{TH5.1} covers \cite[Theorem 4.2]{ZHF2024} as a special case.
\end{remark}

\section{Numerical Experiments}

In this section, motivated by the examples reported in \cite[Section 6.4]{acfr2022}, \cite[Section 5]{zdx2024} and \cite[Section 4]{L2023}, we present some examples to illustrate our theoretical convergence results of the dynamical systems (\ref{1.3}) and \eqref{nonsmooth}. In our numerical experiments, all codes are run on a PC (with 1.600GHz Dual-Core Intel Core i5 and 8GB memory). The dynamical systems are solved by ode45 adaptive method in MATLAB R2018a.
\begin{example} Let $x:=(x_1,x_2,x_3)\in \mathbb{R}^3$ and $y\in\mathbb{R}$.
Consider the following linear equality constrained optimization problem:
\begin{eqnarray}\label{ex1}
\left\{ \begin{array}{ll}
&\mathop{\mbox{min}}\limits_{x\in\mathbb{R}^3,y\in\mathbb{R}}~~{\varPhi  ( x,y ):={( mx_1+nx_2+ex_3)}^{2}+{dy}^2}\\
&~~~\mbox{s.t.}~~~~~Ax+By=b.
\end{array}
\right.
\end{eqnarray}
\end{example}
where $\varPhi:\mathbb{R}^3\times \mathbb{R}\rightarrow \mathbb{R}$, $A:=(m,-n,e)$, $B:=(d)$ with $m,n,e,d\in \mathbb{R}\backslash\{0\}$, and $b:=0$.

It is easy to see that the optimal solution set of (\ref{ex1}) is $S=\left\{(x_1,0,-\frac{m}{e}x_1,0):x_1\in \mathbb{R}\right\}$ and the optimal value is 0. Moreover, the minimal norm solution of (\ref{ex1}) is $(\bar{x}^*,\bar{y}^*)=(0,0,0,0)^\top$. In the following numerical experiments, we take the initial condition $x( 1)= ( 1,1,1)^\top$, $\dot{x}( 1)= ( 1,1,1 )^\top$, $y( 1 )=1$, $\dot{y}(1)=1$ and $\lambda ( 1 )=1$.

In the first experiment, the dynamical system (\ref{1.3}) is  solved on the time interval $[1,100]$. We take $m=5$, $n=1$, $e=1$, $d=5$, $\gamma=0.25$, $\delta=9$, $\beta(t)=t^{\frac{1}{2}}$ and $\epsilon ( t)=\frac{3}{t^{r}}$. For any $ (x^*, y^*)\in S$ and $(\bar{x}^*,\bar{y}^*)=(0,0,0,0)^\top$, we investigate the evolution of the iterate error $\| ( x ( t ),y( t) )- ( \bar{x}^*,\bar{y}^* ) \|$ and the energy error $|\varPhi( x( t ),y ( t ) )-\varPhi  ( x^*,y^* )|$ with different values of $r$. The results are depicted in Figure \ref{fig1}.
\begin{figure}[h]
\centering
\begin{subfigure}[t]{0.45\textwidth}
\rotatebox{90}{\scriptsize{~~~~~~~~\small{$\left \| \left ( x(t),y(t)\right)- \left ( \bar{x}^*,\bar{y}^*\right )\right \|$}}}
\hspace{-1mm}
\includegraphics[width=1\linewidth]{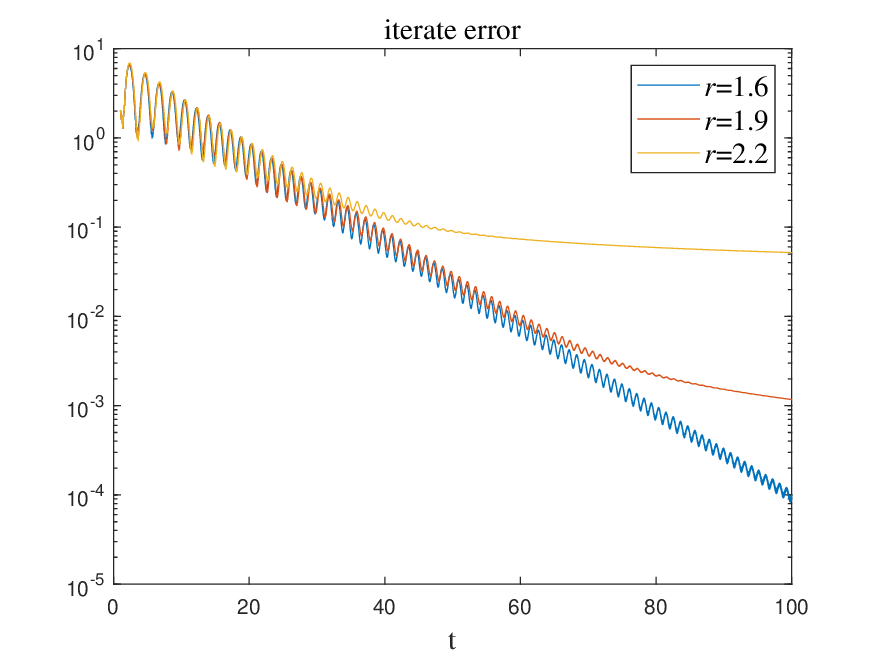}
\end{subfigure}
\hfill
\begin{subfigure}[t]{0.45\textwidth}
\rotatebox{90}{\scriptsize{~~~~~~~\small{$|\varPhi \left ( x(t),y(t)\right )-\varPhi \left ( x^*,y^*\right )|$}}}
\hspace{-1mm}
\includegraphics[width=1\linewidth]{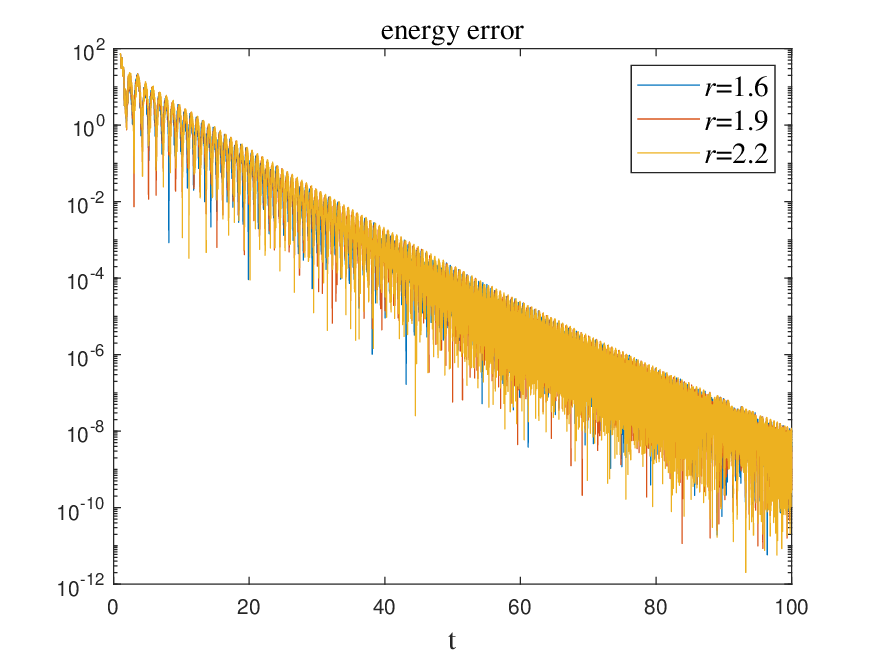}
\end{subfigure}
\caption{Error analysis with different Tikhonov regularization coefficients in the dynamical system (\ref{1.3})}
\label{fig1}
\end{figure}

As shown in Figure \ref{fig1}, the best convergence result for the iterate error $ \| ( x ( t ),y ( t ) )-   ( \bar{x}^*,\bar{y}^* )\|$ is achieved for $r=1.6$.  Furthermore, the energy error $|\varPhi ( x( t),y( t ) )-\varPhi( x^*,y^*)|$ is not sensitive to the changes of the Tikhonov regularization coefficient.

In the second numerical experiment, the dynamical system (\ref{1.3}) is  solved on the time interval $[1,30]$. By choosing different values of $m$, $n$, $e$ and $d$, we reveal the influence of the Tikhonov regularization function $\epsilon(t)$ on the strong convergence of the primal trajectory $\left\{(x(t),y(t))\right\}_{t\geq t_0}$ generated by the dynamical system (\ref{1.3}). To this end, we consider the following cases:

$\mathbf{ Case~ I}$: $\epsilon(t) \neq0$. In this case, let $\gamma=10$, $\beta(t)=t^{\frac{1}{2}}$, $\delta=0.5$ and $\epsilon(t)=\frac{15}{t^{1.6}}$ in the dynamical system (\ref{1.3}). The results are depicted in Figure \ref{fig2}.

\begin{figure}[h]
\centering
\begin{subfigure}[t]{0.45\textwidth}
\rotatebox{90}{\scriptsize{~~~~~~~~~~~~~~~~~~~~$(x(t),y(t))$}}
\hspace{-1mm}
\includegraphics[width=1\linewidth]{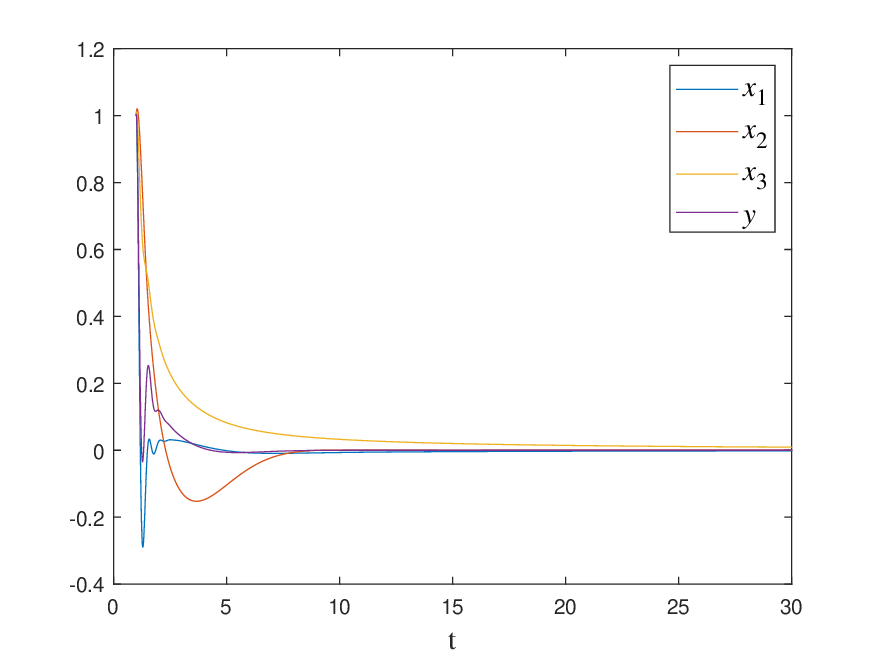}
\end{subfigure}
\hfill
\begin{subfigure}[t]{0.45\textwidth}
\rotatebox{90}{\scriptsize{~~~~~~~~~~~~~~~~~~~~$(x(t),y(t))$}}
\hspace{-1mm}
\includegraphics[width=1\linewidth]{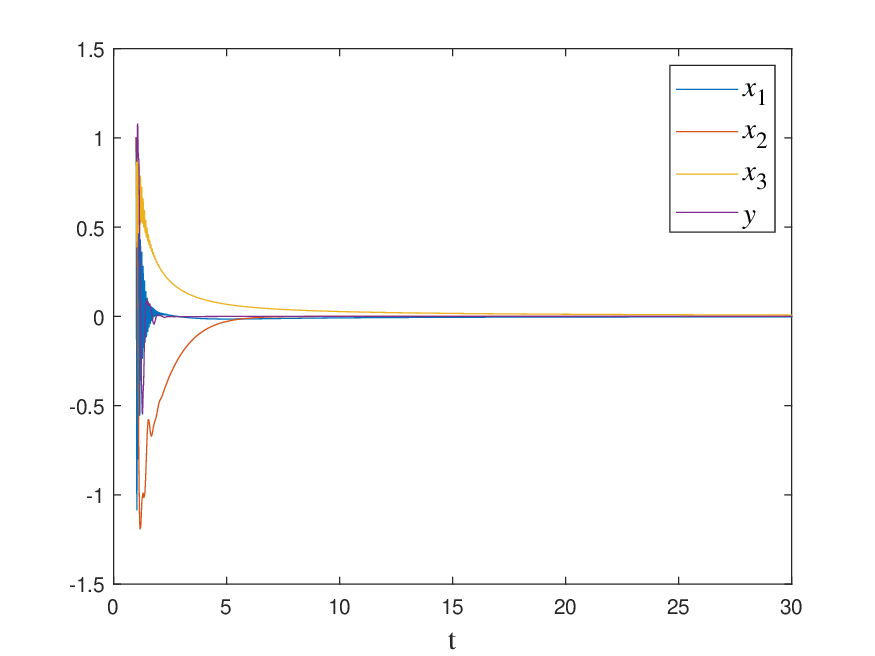}
\end{subfigure}
\begin{minipage}[t]{0.45\textwidth}
        \centering
        \vspace{-3mm}
        \scriptsize{$m=5; n=1; e=1; d=5$}
    \end{minipage}
    \hfill
    \begin{minipage}[t]{0.45\textwidth}
        \centering
        \vspace{-3mm}
        \scriptsize{$m=50; n=10; e=15;d=10$}
    \end{minipage}
\caption{The behaviors of the trajectory $\left\{(x(t),y(t))\right\}_{t\geq t_0}$ generated by the dynamical system (\ref{1.3}) with $\epsilon(t) \neq0$.}
\label{fig2}
\end{figure}
Clearly, under different choices of $m$, $n$, $e$ and $d$, the primal trajectory $\left\{(x(t),y(t))\right\}_{t\geq t_0}$ generated by the dynamical system (\ref{1.3})  with $\epsilon(t) \neq0$ (i.e., with Tikhonov regularization) converges strongly to the minimal norm solution $(\bar{x}^*,\bar{y}^*)=(0,0,0,0)^\top$.

$\mathbf{ Case~{II}}$: $\epsilon (t) \equiv0$. In this case, let $\gamma=10$, $\beta(t)=t^{\frac{1}{2}}$, $\delta=0.5$ and $\epsilon(t)\equiv 0$ in the dynamical system (\ref{1.3}). The results are depicted in Figure \ref{fig3}.
\begin{figure}[h]
\centering
\begin{subfigure}[t]{0.45\textwidth}
\rotatebox{90}{\scriptsize{~~~~~~~~~~~~~~~~~~~~$(x(t),y(t))$}}
\hspace{-1mm}
\includegraphics[width=1\linewidth]{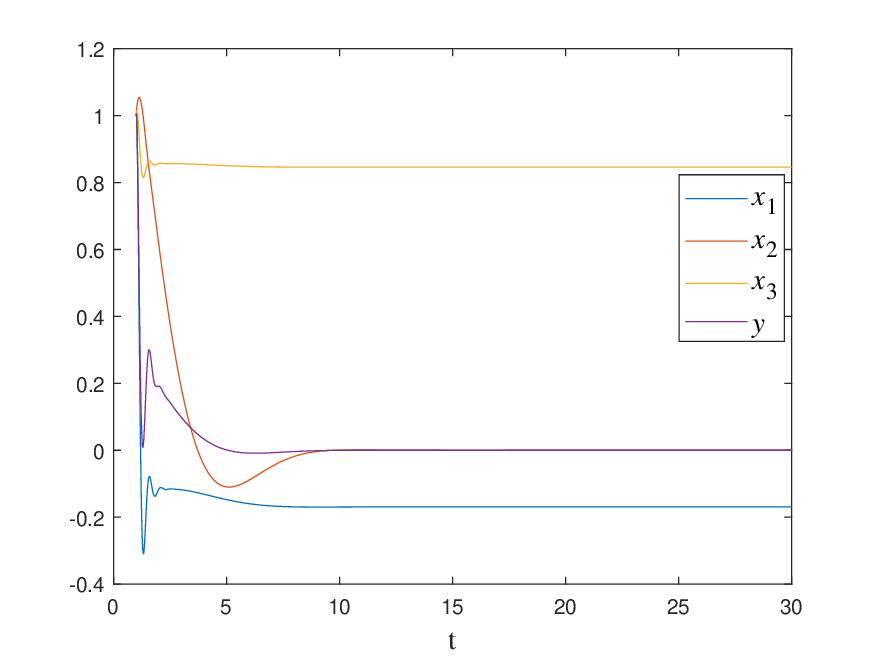}
\end{subfigure}
\hfill
\begin{subfigure}[t]{0.45\textwidth}
\rotatebox{90}{\scriptsize{~~~~~~~~~~~~~~~~~~~~$(x(t),y(t))$}}
\hspace{-1mm}
\includegraphics[width=1\linewidth]{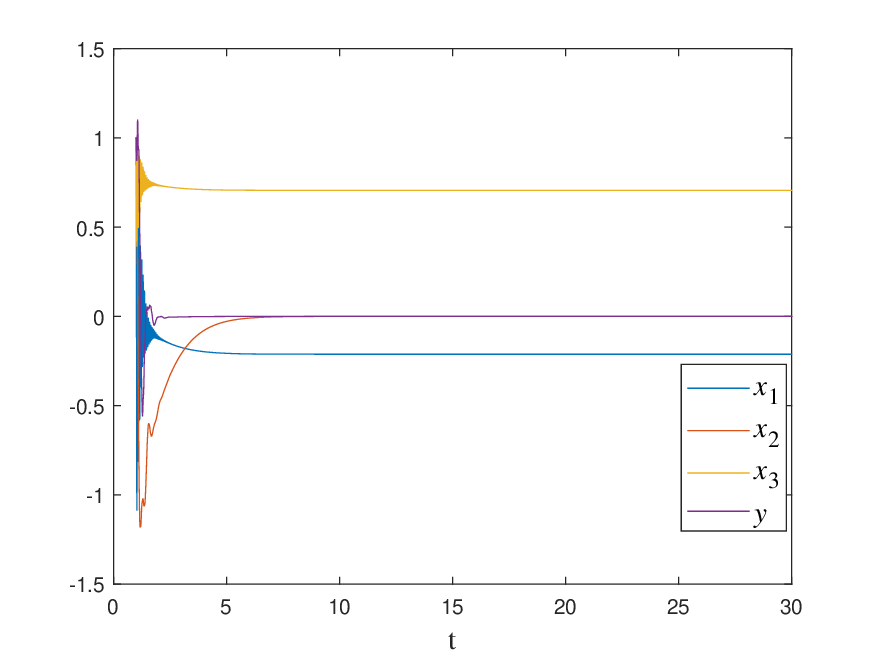}
\end{subfigure}
\begin{minipage}[t]{0.45\textwidth}
        \centering
        \vspace{-3mm}
        \scriptsize{$m=5; n=1; e=1; d=5$}
    \end{minipage}
    \hfill
    \begin{minipage}[t]{0.45\textwidth}
        \centering
        \vspace{-3mm}
        \scriptsize{$m=50; n=10; e=15; d=10$}
    \end{minipage}
\caption{The behaviors of the trajectory $\left\{(x(t),y(t))\right\}_{t\geq t_0}$ generated by the dynamical system (\ref{1.3}) with $\epsilon (t) \equiv0$.}
\label{fig3}
\end{figure}
\vspace{0cm}

Clearly, under different choices of $m$, $n$, $e$ and $d$, the trajectory $\left\{(x(t),y(t))\right\}_{t\geq t_0}$ generated by the dynamical (\ref{1.3}) with $\epsilon (t) \equiv0$ (i.e., without Tikhonov regularization) can not converge to the minimum norm solution $(\bar{x}^*,\bar{y}^*)=(0,0,0,0)^\top$.

In a word, Figures \ref{fig2} and  \ref{fig3}  show  that
Tikhonov regularization term can guarantee that the generated trajectory $\left\{(x(t),y(t))\right\}_{t\geq t_0}$ generated by the dynamical system (\ref{1.3}) converges strongly to the minimal norm solution.

\begin{example}\cite{acfr2022} Let $x:=(x_1,x_2)\in \mathbb{R}^2$ and $y:=(y_1,y_2)\in\mathbb{R}^2$. Consider the following strongly convex quadratic programming problem:
\begin{eqnarray}\label{ex2}
\left\{ \begin{array}{ll}
&\mathop{\mbox{min}}\limits_{x\in\mathbb{R}^2,y\in\mathbb{R}^2}~~{\varPhi  ( x,y ):={\|x-(1,1)^\top\|+\|y\|^2}}\\
&~~~~\mbox{s.t.}~~~~~~x-y-(x_2,0)^\top=(0,0)^\top.
\end{array}
\right.
\end{eqnarray}
\end{example}

In the following two numerical experiments, we compare the dynamical system (\ref{1.3}) with the dynamical systems (\ref{in1.5}) and (\ref{in1.4}).
Here,  the dynamical systems (\ref{in1.5}), (\ref{in1.4}) and (\ref{1.3}) are solved  on the time interval
$[1,100]$. We take the initial condition $x(1)=(1,1)^\top$, $y(1)=(1,1)^\top$, $\lambda(1)=(1,1)^\top$, $\dot{x}(1)=(1,1)^\top$, $\dot{y}(1)=(1,1)^\top$ and $\dot{\lambda}(1)=(1,1)^\top$. By using the MATLAB function quadprog, the optimal value of (\ref{ex2}) is obtained as 0.6. We test the dynamical systems (\ref{in1.5}), (\ref{in1.4}) and (\ref{1.3}) under the following parameters setting:

$\mathbf{ Case~ I}$: The dynamical systems (\ref{in1.5}) and (\ref{in1.4}) incorporate constant damping coefficients.

$ \blacktriangleright$ Dynamical system (\ref{in1.5}): ~~$\gamma(t)\equiv7$ and $\delta(t)=0.2$;

$ \blacktriangleright$ Dynamical system (\ref{in1.4}): ~~$\gamma(t)\equiv7$, $\beta(t)=t^{0.1}$, $\mu=1$ and $a(t)=0.2$;

$ \blacktriangleright$ Dynamical system (\ref{1.3}):~~ $\gamma=9$, $\beta(t)=t^{0.1}$, $\epsilon(t)=\frac{1}{t^2}$ and $\delta=0.2$.

The behaviors of the energy error and the feasibility measure of the dynamical systems (\ref{in1.5}), (\ref{in1.4}), and (\ref{1.3}) with $r\in\{0,0.1,0.4\}$ are depicted in Figure \ref{fig4}.

\begin{figure}[h]
\centering
\setlength{\abovecaptionskip}{0.cm}
\begin{subfigure}[t]{0.45\textwidth}
\rotatebox{90}{\scriptsize{~~~~~~~~$|\varPhi \left ( x(t),y(t)\right )-\varPhi \left ( x^*,y^*\right )|$}}
\hspace{-1mm}
\includegraphics[width=1\linewidth]{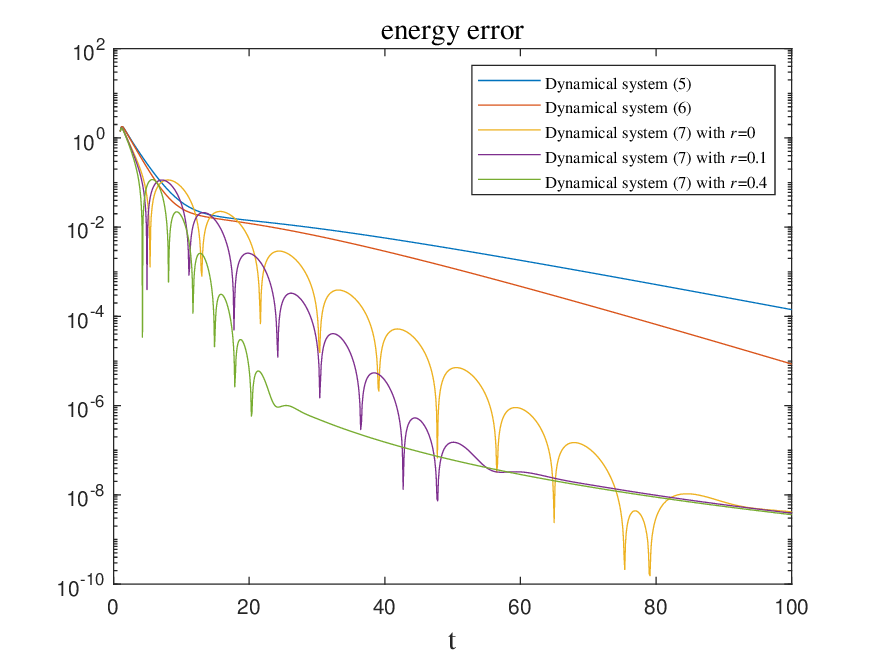}
\end{subfigure}
\hfill
\begin{subfigure}[t]{0.45\textwidth}
\rotatebox{90}{\scriptsize{~~~~~~~~~~~~$\|Ax(t)+By(t)-b\|$}}
\hspace{-1mm}
\includegraphics[width=1\linewidth]{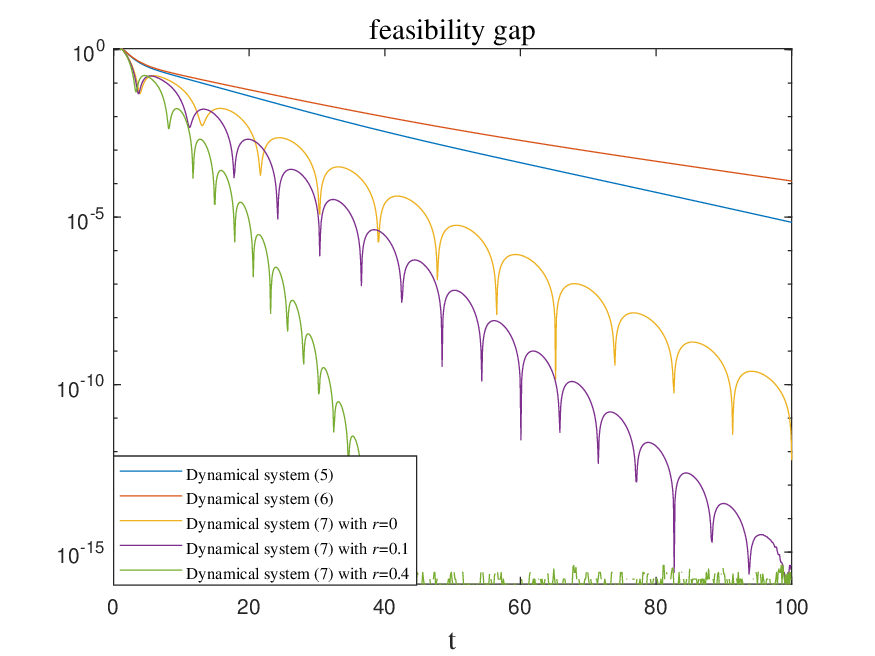}
\end{subfigure}
\begin{minipage}[t]{0.45\textwidth}
        \centering
        \vspace{-3mm}
    \end{minipage}
    \hfill
    \begin{minipage}[t]{0.45\textwidth}
        \centering
        \vspace{-3mm}
    \end{minipage}
\caption{The energy error and the feasibility measure of the dynamical systems (\ref{in1.5}), (\ref{in1.4}) and (\ref{1.3}). }
\label{fig4}
\end{figure}
As shown in Figure \ref{fig4}, the yellow, purple and green curves show that choosing a faster-growing time scaling parameter $\beta(t)$ can produce better convergence rates. Furthermore, from all these curves, it is easy to see that the dynamical system \eqref{1.3} preforms better than the dynamical systems \eqref{in1.5} and \eqref{in1.4}.

{$\mathbf{ Case~ II}$: The dynamical systems (\ref{in1.5}) and (\ref{in1.4}) incorporate asymptotically vanishing damping coefficients.

$ \blacktriangleright$ Dynamical system (\ref{in1.5}): ~~$\gamma(t)=\frac{6}{t}$ and $\delta(t)=\frac{t}{4}$;

$ \blacktriangleright$ Dynamical system (\ref{in1.4}): ~~$\gamma(t)=\frac{6}{t}$, $\beta(t)=t$, $\mu=1$ and $a(t)=\frac{t}{4}$;

$ \blacktriangleright$ Dynamical system (\ref{1.3}):~~ $\gamma=8$, $\beta(t)=t$, $\epsilon(t)=\frac{1}{t^3}$ and $\delta=0.8$.

The behaviors of the energy error and the feasibility measure of the dynamical systems (\ref{in1.5}), (\ref{in1.4}), and (\ref{1.3}) are depicted in Figure \ref{fig5}.
\begin{figure}[h]
\centering
\setlength{\abovecaptionskip}{0.cm}
\begin{subfigure}[t]{0.45\textwidth}
\rotatebox{90}{\scriptsize{~~~~~~~~$|\varPhi \left ( x(t),y(t)\right )-\varPhi \left ( x^*,y^*\right )|$}}
\hspace{-1mm}
\includegraphics[width=1\linewidth]{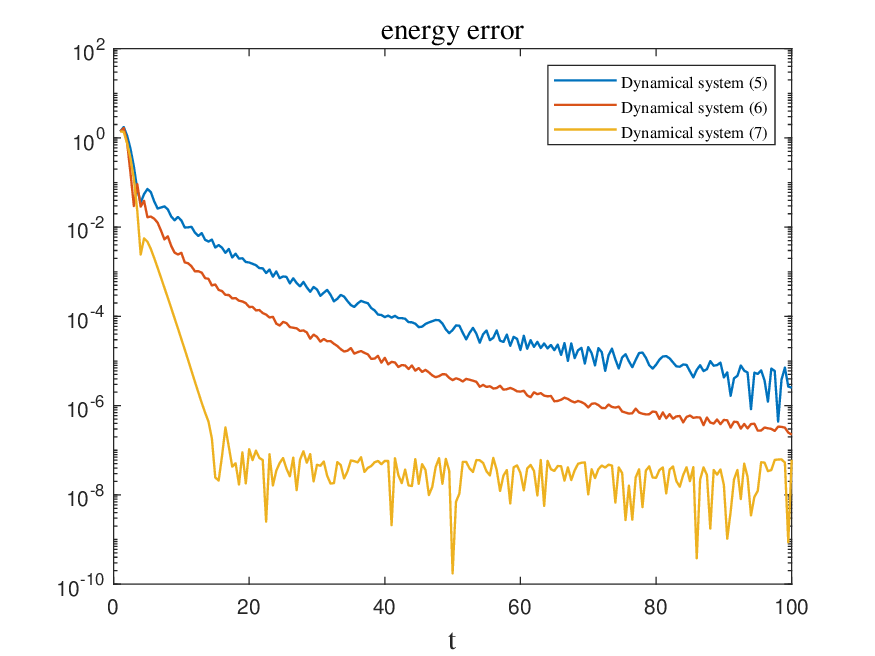}
\end{subfigure}
\hfill
\begin{subfigure}[t]{0.45\textwidth}
\rotatebox{90}{\scriptsize{~~~~~~~~~~~~$\|Ax(t)+By(t)-b\|$}}
\hspace{-1mm}
\includegraphics[width=1\linewidth]{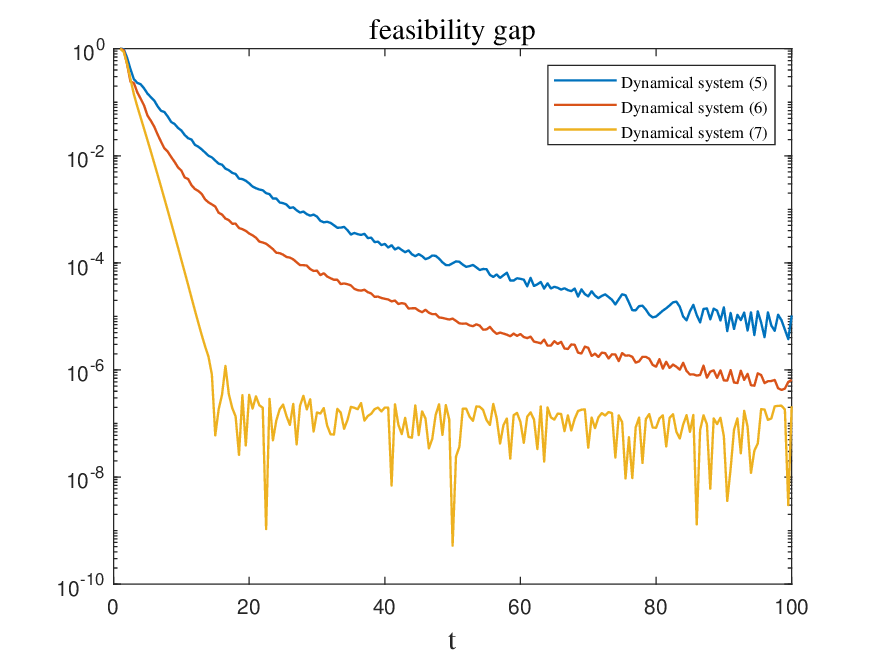}
\end{subfigure}
\begin{minipage}[t]{0.45\textwidth}
        \centering
        \vspace{-3mm}
    \end{minipage}
    \hfill
    \begin{minipage}[t]{0.45\textwidth}
        \centering
        \vspace{-3mm}
    \end{minipage}
\caption{The energy error and the feasibility measure of the dynamical systems (\ref{in1.5}), (\ref{in1.4}) and (\ref{1.3}). }
\label{fig5}
\end{figure}

As shown in Figure \ref{fig5}, when the dynamical systems  (\ref{in1.5}) and (\ref{in1.4}) incorporate asymptotically vanishing damping,  the dynamical system (\ref{1.3}) also performs better  than the dynamical systems (\ref{in1.5}) and (\ref{in1.4}).

At the end of this section,  we give the following example to illustrate the theoretical convergence results of  the differential inclusion system \eqref{nonsmooth}.

\begin{example} Let $x\in \mathbb{R}^n$ and $y\in \mathbb{R}^n$. Consider the following $l_1$-$l_2$ minimization problem
\begin{eqnarray*}
\left\{ \begin{array}{ll}
&\mathop{\mbox{min}}\limits_{x\in\mathbb{R}^n,y\in\mathbb{R}^n}~~{\varPhi  ( x,y ):={\frac{\iota}{2}\|x\|_2^2+\|y\|_1}}\\
&~~~~\mbox{s.t.}~~~~~~Ax+By=b,
\end{array}
\right.
\end{eqnarray*}
where $A\in\mathbb{R}^{m\times n}$, $B\in\mathbb{R}^{m\times n}$ and $b\in\mathbb{R}^{m}$.
\end{example}}
In the following numerical experiment,  we test the influence of the time scaling coefficient on dynamical system \eqref{nonsmooth} under different dimensions.

The dynamical system \eqref{nonsmooth} is solved on  the time interval $[0.1, 5]$. Let  $A$ and $B$ be generated by standard Gaussian distribution. Moreover, let $b=Ax^*+By^*$, where $x^*\in \mathbb{R}^n$ and $y^*\in \mathbb{R}^n$ are generated by the Gaussian distribution $\mathcal{N}(0, 4)$ in $[-2, 2]$ with only $10\%$ non-zero elements.

Take $\iota=0.1$, $\gamma=5$, $\delta=0.5$ and $\varepsilon(t)=\frac{1}{t^2}$. The behaviors of the residual error $\|Ax(t)+By(t)-b\|$ and the relative error$\frac{\|(x(t),y(t))-(x^*,y^*)\|}{\|(x^*,y^*)\|}$ are depict in Figures \ref{fig6} and   \ref{fig7}.
\begin{figure}[h]
\centering
\setlength{\abovecaptionskip}{0.cm}
\begin{subfigure}[t]{0.45\textwidth}
\rotatebox{90}{\scriptsize{~~~~~~~~~~~$\frac{\|(x(t),y(t))-(x^*,y^*)\|}{\|(x^*,y^*)\|}$  }}
\hspace{-1mm}
\includegraphics[width=1\linewidth]{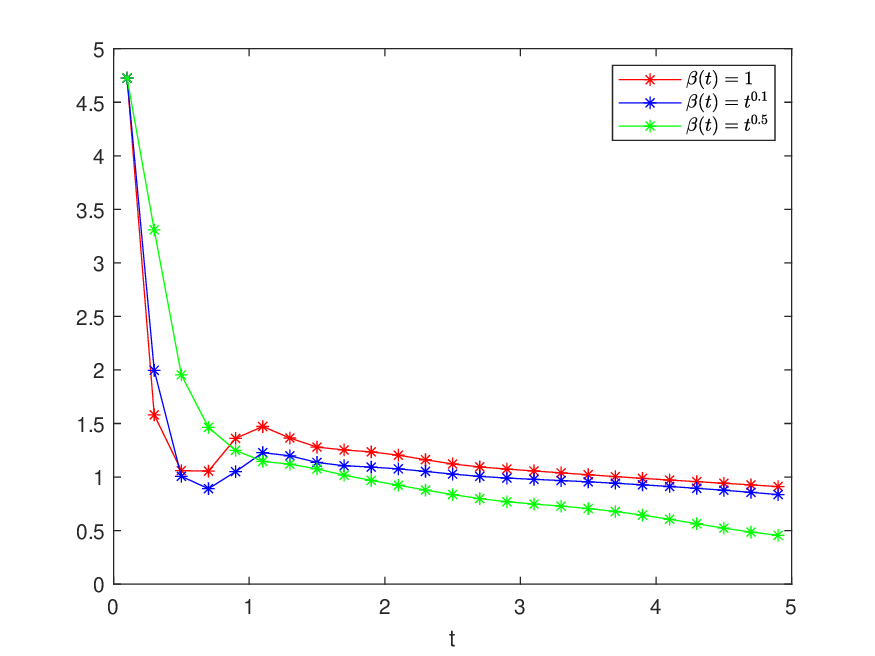}
\end{subfigure}
\hfill
\begin{subfigure}[t]{0.45\textwidth}
\rotatebox{90}{\scriptsize{~~~~~~~~~~~~~$\|Ax(t)+By(t)-b\|$}}
\hspace{-1mm}
\includegraphics[width=1\linewidth]{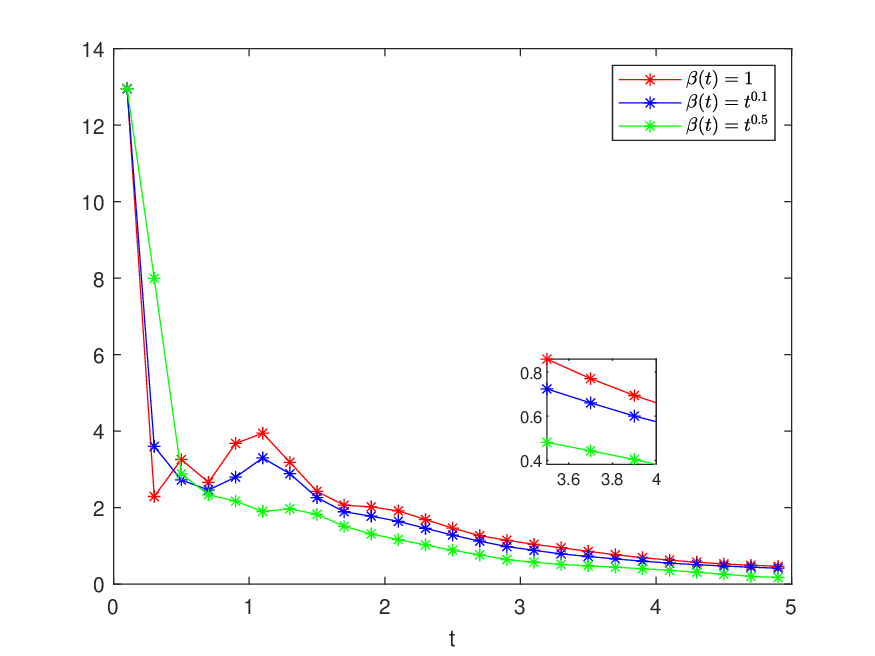}
\end{subfigure}
\begin{minipage}[t]{0.45\textwidth}
        \centering
        \vspace{-3mm}
    \end{minipage}
    \hfill
    \begin{minipage}[t]{0.45\textwidth}
        \centering
        \vspace{-3mm}
    \end{minipage}
\caption{The relative error and residual error of dynamical system with $m=15$ and $n=10$. }
\label{fig6}
\end{figure}
\begin{figure}[h]
\centering
\setlength{\abovecaptionskip}{0.cm}
\begin{subfigure}[t]{0.45\textwidth}
\rotatebox{90}{\scriptsize{~~~~~~~~~~~~$\frac{\|(x(t),y(t))-(x^*,y^*)\|}{\|(x^*,y^*)\|}$  }}
\hspace{-1mm}
\includegraphics[width=1\linewidth]{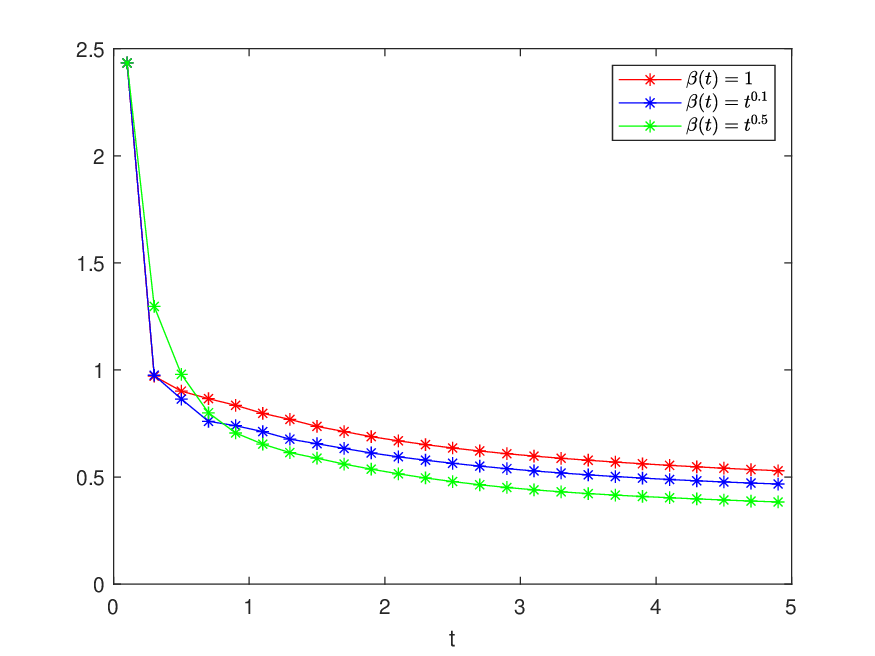}
\end{subfigure}
\hfill
\begin{subfigure}[t]{0.45\textwidth}
\rotatebox{90}{\scriptsize{~~~~~~~~~~~~~$\|Ax(t)+By(t)-b\|$}}
\hspace{-1mm}
\includegraphics[width=1\linewidth]{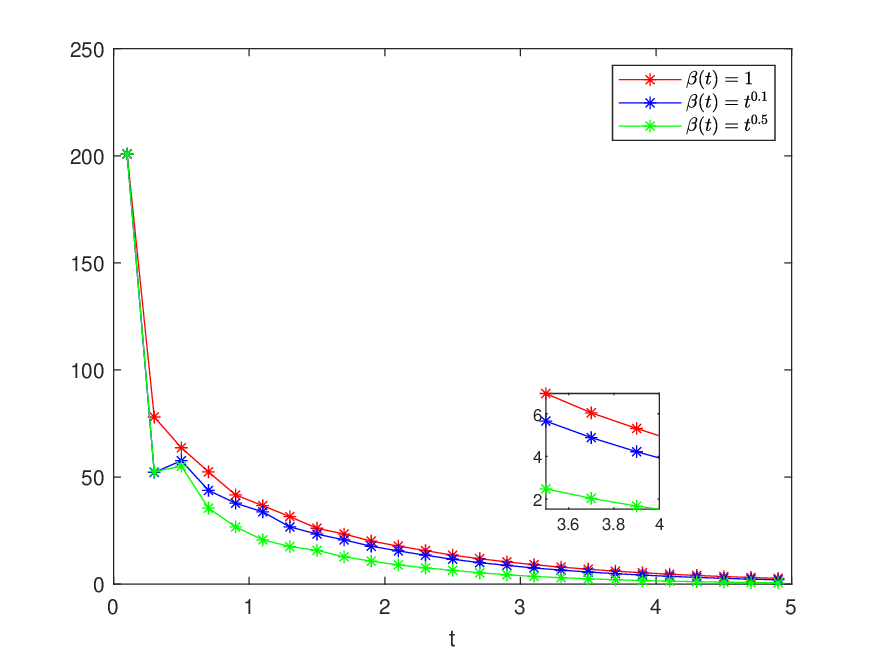}
\end{subfigure}
\begin{minipage}[t]{0.45\textwidth}
        \centering
        \vspace{-3mm}
    \end{minipage}
    \hfill
    \begin{minipage}[t]{0.45\textwidth}
        \centering
        \vspace{-3mm}
    \end{minipage}
\caption{The relative error and residual error of dynamical system with $m=120$ and $n=100$. }
\label{fig7}
\end{figure}

As shown in Figures \ref{fig6} and  \ref{fig7},  the relative  and residual errors of the dynamical system \eqref{nonsmooth} exhibit improved performance when  a faster-growing time scaling parameter $\beta(t)$ is utilized.

\section{Conclusions}
In this paper, we introduce a second-order plus first-order inertial primal-dual dynamical system with time scaling and Tikhonov regularization for the separable convex optimization problem (\ref{1.1}). Compared with the dynamical systems introduced in \cite{hhf2021,acfr2022} for the separable convex optimization problem (\ref{1.1}), the dynamical system (\ref{1.3}) uses second-order ordinary differential equations for the primal variables and first-order ordinary differential equation for the dual variable. We impose some mild assumptions and use Lyapunov analysis to show that the convergence rate of the primal-dual gap along the trajectory is $\mathcal{O}\left(\frac{1}{\beta(t)}\right)$. We also show that the convergence rates of the feasibility measure, objective error along  and the gradient norm of the objective function along the trajectory are all of $\mathcal{O}\left(\frac{1}{\sqrt{\beta(t)}}\right)$. In addition, we show that the primal trajectory converges strongly to the minimal norm solution of the separable convex optimization problem (\ref{1.1}). Through  numerical experiments, we observe that the dynamical system (\ref{1.3}) has fast convergence rates for the objective function error and the feasibility measure.

Although some new results have been obtained on the primal-dual dynamical system for separable convex optimization problems, there are remaining questions to be addressed in the future. For instance, following \cite{zdx2024,abcr2023}, can we formulate numerical algorithms to study the theoretical convergence properties of this primal-dual dynamical system in terms of time discretization? On the other hand, it is also important to consider the primal-dual dynamical system (\ref{1.3}) with asymptotically vanishing damping in the future. Furthermore, can we obtain some convergence results for the dynamical system (\ref{1.3}) with  Hessian damping?
\section*{Funding}
{\small This research is supported by the Natural Science Foundation of Chongqing (CSTB2024NSCQ-MSX0651) and the Team Building Project for Graduate Tutors in Chongqing (yds223010).}

\section*{Acknowledgements}
{\small The authors would like to express their sincere thanks to the two anonymous reviewers for the constructive suggestions and comments.}

\section*{Declaration}
{\small $\mathbf{Conflict ~of~ interest}$ No potential conflict of interest was reported by the authors.}
\bibliographystyle{plain}

\end{document}